\newtheorem{theorem}{Theorem}
\newtheorem{lemma}{Lemma}
\newtheorem{corollary}{Corollary}
\newtheorem{proposition}{Proposition}
\newtheorem{definition}{Definition}
\newtheorem{remark}{Remark}
\newenvironment{proof}{\begin{trivlist} \item[\hskip\labelsep{\it Proof.}]}{$\hfill\Box$\end{trivlist}}
\newenvironment{proof2}{\begin{trivlist} \item[\hskip\labelsep{\it Proof of Theorem~\ref{theovdc}.}]}{$\hfill\Box$\end{trivlist}}
\newenvironment{proof3}{\begin{trivlist} \item[\hskip\labelsep{\it Proof of Theorem~\ref{theohamm}.}]}{$\hfill\Box$\end{trivlist}}
\newenvironment{proof4}{\begin{trivlist} \item[\hskip\labelsep{\it Proof of Theorem~\ref{theosym}.}]}{$\hfill\Box$\end{trivlist}}
\newcommand{\rd}{\,\mathrm{d}}
\newcommand{\bsd}{\boldsymbol{d}}
\newcommand{\bsj}{\boldsymbol{j}}
\newcommand{\bsx}{\boldsymbol{x}}
\newcommand{\bsz}{\boldsymbol{z}}
\newcommand{\bsl}{\boldsymbol{\ell}}
\newcommand{\bsk}{\boldsymbol{k}}
\newcommand{\bsm}{\boldsymbol{m}}
\newcommand{\bst}{\boldsymbol{t}}
\newcommand{\bszero}{\boldsymbol{0}}
\newcommand{\bsone}{\boldsymbol{1}}
\newcommand{\RR}{\mathbb{R}}
\newcommand{\NN}{\mathbb{N}}
\newcommand{\DD}{\mathbb{D}}
\newcommand{\BB}{\mathbb{B}}
\newcommand{\ii}{\mathrm{i}}
\newcommand{\sym}{{\rm sym}}
\newcommand{\cV}{\mathcal{V}}
\newcommand{\cR}{\mathcal{R}}
\newcommand{\cP}{\mathcal{P}}
\newcommand{\cD}{\mathcal{D}}
\newcommand{\cS}{\mathcal{S}}
\newcommand{\cF}{\mathcal{F}}
\newcommand{\eee}{{\rm e}^{\frac{2\pi\mathrm{i}}{b}\ell}-1}
\title{$L_p$- and $S_{p,q}^rB$-discrepancy of the \\ symmetrized van der Corput sequence and \\ modified Hammersley point sets in arbitrary bases}
\author{Ralph Kritzinger \thanks{The author is supported by the Austrian Science Fund (FWF): Project F5509-N26, which is a part of the Special Research Program "Quasi-Monte Carlo Methods: Theory and Applications".}}
\date{}
\begin{document}

\maketitle

\begin{abstract}
We study the local discrepancy of a symmetrized version of the well-known van der Corput sequence and of modified
two-dimensional Hammersley point sets in arbitrary base $b$. We give upper bounds on the norm of the local discrepancy in Besov spaces of dominating mixed smoothness $S_{p,q}^rB([0,1)^s)$, which will also give us bounds on the $L_p$-discrepancy. Our sequence and point sets will achieve the known optimal order for the $L_p$- and $S_{p,q}^rB$-discrepancy. The results in this paper generalize several previous results on $L_p$- and  $S_{p,q}^rB$-discrepancy estimates and provide a sharp upper bound on the $S_{p,q}^rB$-discrepancy of one-dimensional sequences for $r>0$. We will use the $b$-adic Haar function system in the proofs.
\end{abstract} 

\centerline{\begin{minipage}[hc]{130mm}{
{\em Keywords:} discrepancy, Besov spaces, van der Corput sequence, Hammersley point set\\
{\em MSC 2000:} 11K06, 11K31, 11K38, 42C10}
\end{minipage}}

 \allowdisplaybreaks
\section{Introduction and Statement of the Results} 

For an $N$-element point set $\cP=\{\bsx_0,\bsx_1,\dots,\bsx_{N-1}\}$ in the $s$-dimensional unit interval $[0,1)^s$ the local discrepancy $D_N(\cP,\bst)$ is defined as
$$ D_N(\cP,\bst):=\frac{1}{N}\sum_{n=0}^{N-1}\bsone_{[\bszero,\bst)}(\bsx_n)-\prod_{i=1}^{s}t_i. $$
In this expression, for $\bst=(t_1,\dots,t_s)\in [0,1]^s$, the notation $[\bszero,\bst)$ means the $s$-dimensional
interval $[0,t_1)\times \dots \times [0,t_s)$ with volume $\prod_{i=1}^{s}t_i$ and $\bsone_{I}$ denotes the indicator function of the interval $I\subseteq [0,1]^s$.
For an infinite sequence $\cS =(\bsx_n)_{n \ge 0}$ of elements in $[0,1)^s$
the local discrepancy $D_N(\cS,\bst)$ is defined as the local discrepancy of its first $N$ elements. \\
We denote the norm of the local discrepancy in a normed space $X$ of functions on $[0,1)^s$ by $\left\|D_N(\cP,\cdot)\mid X\right\|$, where we must require $D_N(\cP,\cdot) \in X$. \\
In this paper we are interested in particular normed spaces, namely the $L_p([0,1)^s)$ spaces and the Besov spaces $S_{p,q}^rB([0,1)^s)$ of dominating mixed smoothness. The definition of the latter is given in Section~\ref{Besov}. For $p\in [1,\infty]$, the $L_p([0,1)^s)$ space is defined as the collection of all functions $f$ on $[0,1)^s$ with finite $L_p([0,1)^s)$ norm, which for $1\leq p <\infty$ is defined as
$$ \left\|f\mid L_p([0,1)^s)\right\|:=\left(\int_{[0,1)^s}|f(\bst)|^p\mathrm{d} \, \bst\right)^{\frac{1}{p}}, $$
and for $p=\infty$ is given by $$\left\|f\mid L_{\infty}([0,1)^s)\right\|:=\sup_{\bst \in [0,1]^s}|f(\bst)|.$$
We speak of $\left\|D_N(\cP,\cdot)\mid L_p([0,1)^s)\right\|$ and $\left\|D_N(\cP,\cdot)\mid S_{p,q}^rB([0,1)^s))\right\|$ as the $L_p$- and the  $S_{p,q}^rB$-discrepancy of a point set $\cP \in [0,1)^s$, respectively. An analogous notation is used for sequences $\cS \in [0,1)^s$.  The $L_{\infty}$-discrepancy is the well-studied star discrepancy, but in this paper we will assume that $p \in [1,\infty)$.

The $L_p$-discrepancy is a quantitative measure for the irregularity of distribution of a sequence modulo one, see e.g. \cite{DT97,kuinie,mat}. It is also related to the worst-case integration error of a quasi-Monte Carlo rule, see e.g. \cite{DP10,LP14,Nied92}. The $S_{p,q}^rB$-discrepancy is related to the errors of quasi-Monte Carlo algorithms for numerical integration on spaces of dominating mixed smoothness, see e.g. \cite{Tri10}.\\

It is well known that for every $p\in(1,\infty)$ and for all $s\in \NN$ there exist positive numbers $c_{p,s}$ and $c'_{p,s}$ with the property that for every $N\geq 2$ any $N$-element point set $\cP$ in $[0,1)^s$ satisfies 
\begin{equation}\label{lowproinov}
\left\|D_N(\cP,\cdot)\mid L_p([0,1)^s)\right\| \ge c_{p,s} \frac{(\log N)^{\frac{s-1}{2}}}{N},
\end{equation}
and for every sequence $\cS$ in $[0,1)^s$ we have
\begin{equation}\label{lowproinovseq}
\left\|D_N(\cS,\cdot)\mid L_p([0,1)^s)\right\| \ge c'_{p,s} \frac{(\log N)^{\frac{s}{2}}}{N} \ \ \ \mbox{ for infinitely many $N \in \NN$},
\end{equation}
where $\log$ denotes the natural logarithm. The inequality \eqref{lowproinov} was shown by Roth~\cite{Roth} for $p=2$ (and therefore for $p \in (2,\infty)$ because of the monotonicity of the $L_p$ norms) and Schmidt~\cite{schX}  for $p\in (1,2)$.  Proinov~\cite{pro86} could prove \eqref{lowproinovseq} based on the results of Roth and Schmidt. Hal\'{a}sz~\cite{hala} showed that the bounds \eqref{lowproinov} and \eqref{lowproinovseq} also hold for the $L_1$-discrepancy of two-dimensional point sets and one-dimensional sequences, respectively. There exist point sets in every dimension $s$ with the order of the $L_p$-discrepancy of $(\log{N})^{\frac{s-1}{2}}/N$ for $p\in (1,\infty)$ (see \cite{chen1980} for the first existence result), which shows that the lower bound given in \eqref{lowproinov} is sharp. Chen and Skriganov \cite{CS02} gave for the first time for every integer $N \ge 2$ and every dimension $s \in \NN$, explicit constructions of finite $N$-element point sets in $[0,1)^s$ whose $L_2$-discrepancy achieves an order of convergence of $(\log{N})^{\frac{s-1}{2}}/N$. The result in \cite{CS02} was extended to the $L_p$-discrepancy for $p \in (1,\infty)$ by Skriganov~\cite{Skr}. The inequality \eqref{lowproinovseq} is also sharp for one-dimensional sequences (see e.g. \cite{KP2015}). Moreover, it is sharp for the $L_2$-discrepancy in all dimensions (see \cite{DP14neu, DP14}). Showing sharpness for all $p\in (1,\infty)$ in all dimensions is currently work in progress. \\

There are also known lower and upper bounds for the $S_{p,q}^rB$-discrepancy in arbitrary dimensions.
Triebel, who initiated the study of the local discrepancy in other spaces such as the Besov spaces and Triebel-Lizorkin spaces of dominating mixed smoothness in \cite{Tri10} and \cite{Tri10b}, showed that for all $1\leq p,q \leq \infty$ and $r\in \RR$ satisfying $\frac{1}{p}-1<r<\frac{1}{p}$ and $q<\infty$ if $p=1$ and $q>1$ if $p=\infty$ there exists a constant $c_1>0$ such that for any $N\geq 2$ the local discrepancy of any $N$-element point set $\cP$ in $[0,1)^s$ satisfies
\begin{equation} \label{lowerbound} \left\| D_N(\cP,\cdot) \mid S_{p,q}^rB([0,1)^s) \right\|\geq c_1 N^{r-1}(\log{N})^{\frac{s-1}{q}}. \end{equation}
Also, for any $N\geq 2$, there exists a point set $\cP$ in $[0,1)^s$ with $N$ points and a constant $c_2>0$ such that
$$ \left\| D_N(\cP,\cdot) \mid S_{p,q}^rB([0,1)^s) \right\|\leq c_2 N^{r-1}(\log{N})^{(s-1)\left(\frac{1}{q} +1-r\right)}. $$
Hinrichs showed in \cite{hin2010} that in two dimensions the gap between the exponents of the lower and the upper bounds can be closed for $1\leq p,q \leq \infty$ and $0 \leq r <\frac{1}{p}$. For the proof, he considered specific point sets, namely the dyadic versions of the digit scrambled Hammersley point sets given in Definition~\ref{def2}, which achieve a $S_{p,q}^rB$-discrepancy  of order in accordance to the lower bound \eqref{lowerbound}. Markhasin closed the gap in arbitrary dimensions under the same conditions on $p$, $q$ and $r$ by considering Chen-Skriganov
point sets in \cite{Mar2013a}. Summarizing, for $1\leq p,q \leq \infty$ and $r\geq 0$ there exist point sets $\cP$ in $[0,1)^s$ with $N$ points and a constant $c_3>0$ such that
\begin{equation*}  \left\| D_N(\cP,\cdot) \mid S_{p,q}^rB([0,1)^s) \right\|\leq c_3 N^{r-1}(\log{N})^{\frac{s-1}{q}}, \end{equation*}
which is best possible. Finding corresponding bounds on the $S_{p,q}^rB$-discrepancy for infinite sequences is work in progress. \\  

We introduce the $b$-adic van der Corput sequence and a symmetrized version thereof as well as the $b$-adic Hammersley point set  and two modified variants, namely a digit scrambled and a symmetrized version.
\begin{definition} \rm Let $b\geq 2$ be an integer and $\varphi_b(n)$ denote the {\it radical inverse} of $n \in \NN_0$ in base $b$. It is defined as $\varphi_b(n):=\sum_{i=0}^{k}n_ib^{-i-1}$ whenever $n$ has $b$-adic expansion $n=\sum_{i=0}^{k}n_ib^i$. The (classical) van der Corput sequence in base $b$ is the sequence $\cV_{b}=(\varphi_b(n))_{n\geq 0}$.
The symmetrized van der Corput sequence in base $b$ is given by $\cV_{b}^{\sym}:=(z_n)_{n\geq 0}$, where
\[z_n = \begin{cases} \varphi_b(m) &\mbox{if } n=2m, \\ 
                       1-\varphi_b(m) & \mbox{if } n=2m+1. \end{cases} 
\]
\end{definition}

\begin{definition} \rm \label{def2} Let $b\geq 2$ and $n \geq 1$ be integers. The (classical) $b$-adic Hammersley point set consisting of $N=b^n$ elements is given by
\begin{align*} \cR_{b,n}:=&\left\{\left(\frac{m}{b^n},\varphi_b(m)\right): m \in \{0,1,\dots,b^n-1\}\right\} \\ =&\left\{ \left(\frac{a_n}{b}+\dots+\frac{a_1}{b^n},\frac{a_1}{b}+\dots+\frac{a_n}{b^n}\right): a_1,\dots,a_n \in \{0,1,\dots,b-1\}\right\}. \end{align*}
Let $\mathfrak{S}_b$ be the set of all permutations of $\{0,1,\dots,b-1\}$ and let $\tau \in \mathfrak{S}_b$ be 
given by $\tau(k)=b-1-k$ for $k\in \{0,1,\dots,b-1\}$. Let $\Sigma=(\sigma_1,\dots,\sigma_{n}) \in \mathfrak{S}_b^n$. We define the digit scrambled Hammersley point set associated to $\Sigma$ consisting of $N=b^n$ elements by
$$ \cR_{b,n}^{\Sigma}:=\left\{ \left(\frac{\sigma_n(a_n)}{b}+\dots+\frac{\sigma_1(a_1)}{b^n},\frac{a_1}{b}+\dots+\frac{a_n}{b^n}\right): a_1,\dots,a_n \in \{0,1,\dots,b-1\}\right\}. $$
In this paper we assume that for a fixed $\sigma \in \mathfrak{S}_b$ we have either $\sigma_i=\sigma$ or $\sigma_i=\tau \circ \sigma =: \overline{\sigma}$ for all $i \in \{1,\dots,n\}$, i.e. $\Sigma \in \{\sigma,\overline{\sigma}\}^n$. We define the number \begin{equation} \label{ln} l_n:=|\{i\in \{1,\dots,n\}: \sigma_i=\sigma\}|, \end{equation}
i. e. the number of components $\sigma_i$ of $\Sigma$ which equal $\sigma$. \\
Let $\sigma \in \mathfrak{S}_b$ and $\Sigma=(\sigma_i)_{i=1}^n \in \{\sigma,\overline{\sigma}\}^n$ fixed. We also put $\Sigma^{\ast}=(\sigma_i^{\ast})_{i=1}^n \in \{\sigma,\overline{\sigma}\}^n$, where $\sigma_i^{\ast}=\tau\circ\sigma_i$ for all $i \in \{1,\dots,n\}$. The symmetrized Hammersley point set (associated to $\Sigma$) consisting of $\widetilde{N}=2b^n$ elements is then defined as 
   $$\cR_{b,n}^{\Sigma, \mathrm{sym}}=\cR_{b,n}^{\Sigma}\cup \cR_{b,n}^{\Sigma^{\ast}}.$$
We speak of a symmetrized point set, because $\cR_{b,n}^{\Sigma, \mathrm{sym}}$ can also be written as the union of $\cR_{b,n}^{\Sigma}$ with the point set
\begin{equation} \label{warumsym} \left\{ \left(1-\frac{1}{b^n}-x,y\right) \, : \, (x,y) \in \cR_{b,n}^{\Sigma} \right\} \end{equation}
\end{definition}

The process of symmetrization and digit scrambling of sequences and finite point sets has been applied in discrepancy theory many times before. This is due to the fact
that the classical versions of the van der Corput sequence and the Hammersley point set fail to have optimal $L_p$-discrepancy for all $p\in[1,\infty)$, which follows for instance from \cite[Theorem 1]{HKP14} and \cite[Remark 1]{KP2015}. The first two-dimensional point set with the optimal order of $L_2$-discrepancy was indeed found within symmetrized point sets by Davenport \cite{daven} in 1956. A thorough discussion of Davenport's principle, applied to the Hammersley point set, can be found in \cite{CS03}. Halton and Zaremba \cite{HZ} introduced digit scrambling for the dyadic Hammersley point set in 1969 and showed that the modified point sets overcome the defect of the classical Hammersley point set and achieve an optimal $L_2$-discrepancy in the sense of \eqref{lowproinov}. \\

The aim of this paper is to prove the following theorems. Here and throughout the paper, for functions $f,g:\NN \rightarrow \RR^+$, we write $g(N) \ll f(N)$ and $g(N) \gg f(N)$, 
if there exists a $C>0$ such that $g(N) \le C f(N)$ or $g(N) \geq C f(N)$ for all $N \in \NN$, $N \ge 2$, respectively. This constant $C$ is independent of $N$, but might depend on several other parameters. Further, we write $f(N) \asymp g(N)$ if the relations $g(N) \ll f(N)$ and $g(N) \gg f(N)$ hold simultaneously.  

\begin{theorem}\label{theovdc}
Let $1\leq p,q \leq \infty$ and $0\leq r < \frac{1}{p}$. Then for any integer $b\geq 2$ we have
$$ \left\|D_N(\cV_{b}^{\sym}) \mid S_{p,q}^rB([0,1))\right\| \ll N^{-1}(\log{N})^{\frac{1}{q}}  $$
if $r=0$ and
$$ \left\|D_N(\cV_{b}^{\sym}) \mid S_{p,q}^rB([0,1))\right\| \ll N^{r-1}  $$
if $0< r < \frac{1}{p}$ for all $N\geq 2$.
\end{theorem}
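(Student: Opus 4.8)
The plan is to expand the one–dimensional local discrepancy function into the $b$-adic Haar system and to control its coefficients. Denote by $\mu_{j,k,\ell}$ the $b$-adic Haar coefficients of $D_N(\cV_{b}^{\sym},\cdot)$ (level $j\ge-1$, shift $k$, and $\ell\in\{1,\dots,b-1\}$), normalised as in Section~\ref{Besov}. One direction of the $b$-adic Haar characterisation of $S_{p,q}^rB([0,1))$ recorded there gives, under the hypotheses of the theorem,
$$ \left\|D_N(\cV_{b}^{\sym}) \mid S_{p,q}^rB([0,1))\right\| \ll \left(\sum_{j\ge-1} b^{j(r+\frac12-\frac1p)q}\Big(\sum_{k,\ell}|\mu_{j,k,\ell}|^p\Big)^{q/p}\right)^{1/q}, $$
with the usual modifications if $p=\infty$ or $q=\infty$; the hypothesis $0\le r<\frac1p$ is exactly what keeps the $b$-adic Haar system admissible and guarantees $D_N(\cV_{b}^{\sym},\cdot)\in S_{p,q}^rB([0,1))$. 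Thus the whole task is to bound $\sum_{k,\ell}|\mu_{j,k,\ell}|^p$ at each level $j$.

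First I would reduce to even indices. For $N=2M+1$ one has $D_N(\cV_{b}^{\sym},\cdot)=\frac{N-1}{N}D_{N-1}(\cV_{b}^{\sym},\cdot)+\frac1N\big(\bsone_{[0,\cdot)}(\varphi_b(M))-(\cdot)\big)$, and the last summand, being a jump function with $r<\frac1p$, has $S_{p,q}^rB([0,1))$-norm $\ll1$ uniformly, hence contributes $\ll N^{-1}$, which is absorbed by both asserted bounds since $r\ge0$. For $N=2M$ the definition of $\cV_{b}^{\sym}$ gives, almost everywhere, the reflection identity
$$ D_{2M}(\cV_{b}^{\sym},t)=\tfrac12 D_M(\cV_b,t)-\tfrac12 D_M(\cV_b,1-t) $$
(Davenport's principle; cf. the point-set construction \eqref{warumsym}). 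Since $t\mapsto1-t$ maps the elementary $b$-adic interval of level $j$ and shift $k$ onto that of shift $b^j-1-k$ and sends $h_{j,k,\ell}$ to a unimodular multiple of $\overline{h_{j,b^j-1-k,\ell}}$, it follows that
$$ \mu_{j,k,\ell}\big(D_{2M}(\cV_{b}^{\sym},\cdot)\big)=\tfrac12\Big(\mu_{j,k,\ell}\big(D_M(\cV_b,\cdot)\big)-\omega_\ell\,\overline{\mu_{j,b^j-1-k,\ell}\big(D_M(\cV_b,\cdot)\big)}\Big) $$
for a unimodular $\omega_\ell$ depending only on $b$ and $\ell$. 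So everything reduces to this particular combination of Haar coefficients of the classical van der Corput sequence.

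To study $\mu_{j,k,\ell}(D_M(\cV_b,\cdot))$ I would use the base-$b$ expansion $M=\sum_{i\ge0}M_ib^i$ and the self-similarity of $\cV_b$: the first $M$ points split into consecutive blocks, the $i$-th of which consists of $M_ib^i$ points arranged $b^{-i}$-periodically ($M_i$ equally spaced points per period, globally shifted), so that $D_M(\cV_b,\cdot)=\sum_i\frac{M_ib^i}{M}D^{(i)}$ with $D^{(i)}$ the local discrepancy of the $i$-th block. Each $D^{(i)}$ is $b^{-i}$-periodic, whence its Haar coefficients and those of $D^{(i)}(1-\cdot)$ vanish below level $i$; at and above that level they are explicit, and — the decisive structural fact — the linear part of $t\mapsto D^{(i)}(t)-D^{(i)}(1-t)$ equals $1-2t$ regardless of $i$. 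Consequently, in the combination defining $\mu_{j,k,\ell}(D_{2M}(\cV_{b}^{\sym},\cdot))$ the ``smooth'' contributions of the (at most $\asymp\log_b N$) blocks telescope into a single term of size $\ll b^{-3j/2}$ (in the normalisation of Section~\ref{Besov}) instead of piling up the superfluous logarithmic factor responsible for the non-optimality of the plain van der Corput sequence. Estimating the remaining, genuinely discrete part — which, via the primitive of the Haar function, is an integration error and is therefore controlled by the local discrepancy of the van der Corput points inside $b$-adic subintervals together with the reflection symmetry — one is led to
$$ \sum_{k,\ell}|\mu_{j,k,\ell}(D_N(\cV_{b}^{\sym},\cdot))|^p \ll b^{j(1-\frac p2)}N^{-p}\quad\text{for }0\le j\lesssim\log_b N, $$
and $\ll b^{j(1-\frac{3p}2)}$ for $j\gtrsim\log_b N$, the latter being just the smoothness ceiling of a discrepancy function.

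Inserting these bounds, the $j$-th summand of the Haar norm has order $b^{jrq}N^{-q}$ for $0\le j\lesssim\log_b N$ and order $b^{(r-1)jq}$ for $j\gtrsim\log_b N$; the latter is summable because $r<\frac1p\le1$ and adds up to $\ll N^{(r-1)q}$. If $r=0$ the first range contributes $\asymp N^{-q}\log N$ and dominates, giving $\|D_N(\cV_{b}^{\sym})\mid S_{p,q}^0B([0,1))\|\ll N^{-1}(\log N)^{1/q}$; if $0<r<\frac1p$ the factor $b^{jrq}$ makes the first range sum to $\asymp N^{-q}b^{(\log_b N)rq}=N^{(r-1)q}$, of the same order as the tail, so $\|D_N(\cV_{b}^{\sym})\mid S_{p,q}^rB([0,1))\|\ll N^{r-1}$; the cases $p=\infty$ or $q=\infty$ follow by the standard modifications. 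I expect the crux to be the third step: getting sharp, fully explicit estimates for the $b$-adic Haar coefficients of the van der Corput sequence at an arbitrary index $N$ and in an arbitrary base $b$ (where the Haar functions involve the characters $k\mapsto\mathrm{e}^{2\pi\ii\ell k/b}$, so both the coefficients and the block decomposition are more delicate than in the dyadic case of \cite{hin2010}), and — above all — making the cancellation coming from the reflection $t\mapsto1-t$ explicit and uniform in $j$, $k$ and $\ell$; the bookkeeping for odd $N$ and the verification that no single level destroys the exponent $\frac1q$ of $\log N$ (respectively removes the logarithm when $r>0$) is where the argument is most likely to become technical.
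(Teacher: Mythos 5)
Your overall strategy is the paper's: pass to the equivalent Haar--coefficient quasi-norm of Proposition~\ref{equivalence}, bound $\sum_{m,\ell}|\mu_{j,m,\ell}|^p$ level by level, split the sum at $j\approx\log_b N$, and sum; the per-level bounds you target are exactly those of Corollary~\ref{coro1} (your exponent $b^{j(r+\frac12-\frac1p)q}$ corresponds to $L_2$-normalised Haar functions rather than the unnormalised convention of Proposition~\ref{equivalence}, but your estimates are internally consistent with that choice and the final arithmetic is identical to the paper's). Where you differ is the route to the coefficient bounds: you propose the reflection identity $D_{2M}(\cV_b^{\sym},t)=\tfrac12 D_M(\cV_b,t)-\tfrac12 D_M(\cV_b,1-t)$ together with a digit-block decomposition of $D_M(\cV_b,\cdot)$ and a telescoping of the linear parts, whereas the paper writes the symmetrized coefficient as a plain average of the coefficients of the two subsequences $(\varphi(n))_n$ and $(1-\varphi(n))_n$ (Lemma~\ref{haarrelation}) and bounds each summand separately and directly from the arithmetic of the radical inverse (Lemmas~\ref{Phi}, \ref{allgemein}, \ref{Theo1}), with no block decomposition and no cancellation. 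This exposes the one place where your narrative mislocates the mechanism: for $j\ge 0$ no cancellation between $t$ and $1-t$ is needed, since each of $|\mu^{M,\varphi}_{j,m,\ell}|$ and $|\mu^{M,1-\varphi}_{j,m,\ell}|$ is individually $\ll N^{-1}b^{-j}$ (Lemma~\ref{Theo1}) and the triangle inequality already gives your claimed level-$j$ bound. The symmetrization is decisive only for the level $j=-1$ coefficient $\int_0^1 D_N$, which is of order $(\log N)/N$ for the plain van der Corput sequence but vanishes for even $N$ after symmetrization (Lemma~\ref{erster}); your reflection formula does produce this, but your concluding summation runs only over $j\ge 0$, so you should state the $j=-1$ term explicitly. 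Your treatment of odd $N$ by peeling off the last point, whose jump function has uniformly bounded $S_{p,q}^rB$-norm for $r<\frac1p$, is a clean alternative to the paper's Lemmas~\ref{erster} and~\ref{haarrelation}. Modulo writing out the block-decomposition estimates in full --- which you correctly identify as the technical crux and which the paper replaces by the explicit computations of Lemma~\ref{Theo1} --- the proposal is sound and reaches the stated bounds.
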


\begin{theorem} \label{theohamm} 
Let $1\leq p,q \leq \infty$ and $0\leq r < \frac{1}{p}$. Then for any integer $b\geq 2$ we have 
$$ \left\|D_N(\cR_{b,n}^{\Sigma}) \mid S_{p,q}^rB([0,1)^2)\right\| \ll N^{r-1}(\log{N})^{\frac{1}{q}} $$
if and only if $|2l_n-n|=O(n^{\frac{1}{q}})$ (where $l_n$ as defined in \eqref{ln}) or $\frac{1}{b}\sum_{a=0}^{b-1}\sigma(a)a=\frac{(b-1)^2}{4}$.
\end{theorem}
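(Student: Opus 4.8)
The strategy is to expand the local discrepancy $D_N(\cR_{b,n}^{\Sigma},\cdot)$ in the $b$-adic Haar basis of $S_{p,q}^rB([0,1)^2)$ and to compute the Haar coefficients explicitly in terms of the permutation data. Since $\cR_{b,n}^{\Sigma}$ is a $(0,n,2)$-net in base $b$ (up to the digit scramblings, which preserve the net property), the standard Haar-coefficient calculus for digital nets applies: for a Haar function supported on a $b$-adic box whose area is at least $b^{-n}$, the coefficient is controlled by the discrepancy of the net restricted to that box, while for finer boxes only "diagonal" terms survive. Concretely, I would split the Haar index set according to the levels $(j_1,j_2)$ of the two coordinates, with the key dichotomy being $j_1+j_2<n$ (coarse boxes, where the net is perfectly distributed and only the integration term $\prod t_i$ contributes) versus $j_1+j_2\geq n$ (fine boxes, where the counting term contributes). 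The essential new computation is the precise value of the Haar coefficients on the "critical strip" $j_1+j_2=n-1$ or thereabouts, where the dependence on $\sigma$ and on the pattern $\Sigma\in\{\sigma,\bar\sigma\}^n$ enters.

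**Key steps, in order.** First, recall (or re-derive) the $b$-adic Haar expansion and the characterization of the $S_{p,q}^rB$-norm as a weighted $\ell_q(\ell_p)$ norm of the Haar coefficients over dyadic-type levels $\boldsymbol{j}=(j_1,j_2)$. Second, compute the Haar coefficients $\mu_{\boldsymbol{j},\boldsymbol{m},\boldsymbol{\ell}}$ of $D_N(\cR_{b,n}^{\Sigma},\cdot)$: for $|\boldsymbol{j}|:=j_1+j_2\geq n$ one gets the usual bound $|\mu|\ll b^{-n}b^{-|\boldsymbol{j}|}$ (with boundedly many nonzero coefficients per level), exactly as in the unmodified case, and these contribute the optimal $N^{r-1}(\log N)^{1/q}$ term regardless of $\Sigma$. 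Third — and this is the heart — isolate the coefficients on the levels with $j_1+j_2<n$ but one of $j_1,j_2$ equal to $-1$ (the "one-dimensional face" boxes, i.e. Haar functions constant in one variable) together with the genuinely two-dimensional boxes of intermediate size; for these the coefficient splits into a main term and a term proportional to $\sum_{a=0}^{b-1}\big(\sigma_i(a)a\big)$ for the relevant digit position $i$, or more precisely to the quantity $S_\sigma:=\frac1b\sum_{a=0}^{b-1}\sigma(a)a$ versus its mirror value $S_{\bar\sigma}=\frac1b\sum_a(b-1-\sigma(a))a=\frac{(b-1)^2}{2}-S_\sigma$. Fourth, sum these contributions across the $n$ digit positions: the position $i$ contributes with sign $+$ if $\sigma_i=\sigma$ and $-$ if $\sigma_i=\bar\sigma$, so the aggregate carries a factor $\big(S_\sigma-\tfrac{(b-1)^2}{4}\big)(2l_n-n)$. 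Fifth, plug this into the Besov norm: the "bad" part contributes a term of order $\big|S_\sigma-\tfrac{(b-1)^2}{4}\big|\,\big|2l_n-n\big|\,N^{r-1}$, which is absorbed into $N^{r-1}(\log N)^{1/q}$ precisely when $|2l_n-n|=O(n^{1/q})$, and vanishes identically when $S_\sigma=\tfrac{(b-1)^2}{4}$, i.e. $\frac1b\sum_{a=0}^{b-1}\sigma(a)a=\tfrac{(b-1)^2}{4}$. For the converse direction, one shows that when neither condition holds this bad term is a genuine lower bound: the critical Haar coefficients do not cancel against anything else (they live on levels disjoint from the $|\boldsymbol{j}|\geq n$ part), so $\|D_N\mid S_{p,q}^rB\|\gg |2l_n-n|\,N^{r-1}\gg n^{1/q}N^{r-1}=(\log N)^{1/q+\varepsilon'}\cdot(\text{stuff})$ growing faster than $N^{r-1}(\log N)^{1/q}$ when $|2l_n-n|\neq O(n^{1/q})$.

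**Main obstacle.** The delicate point is the exact evaluation of the Haar coefficients on the intermediate levels — keeping track of which digits of the box-anchor $\boldsymbol{m}$ and which shift-parameter $\boldsymbol{\ell}$ produce the $\sigma_i(a_i)$-dependence, and verifying that the $\sigma$-dependent part genuinely aggregates to the clean linear expression in $2l_n-n$ rather than partially cancelling or producing lower-order cross terms. One must be careful that the permutations $\sigma$ and $\bar\sigma=\tau\circ\sigma$ act on the scrambled coordinate while the radical-inverse coordinate stays fixed, so the net property guarantees that boxes with $|\boldsymbol{j}|\geq n$ are unaffected, and the entire $\sigma$-sensitivity is confined to a logarithmic-in-$N$ number of levels; quantifying this confinement rigorously, and then proving the matching lower bound for the converse (which requires that no cancellation occurs and that a single non-vanishing critical coefficient already forces the claimed growth), is where the real work lies. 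The $p=1$ and $p=\infty$, $q$-endpoint cases will as usual need the separate monotonicity/embedding arguments, but these are routine once the coefficient formula is in hand.
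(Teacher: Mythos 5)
Your overall architecture -- Haar expansion, the equivalent $\ell_q(\ell_p)$ form of the Besov norm, the coarse/fine dichotomy in $|\bsj|$, and the algebraic observation that $\frac{(b-1)^2}{4}$ is the fixed point of $S_\sigma\mapsto S_{\overline{\sigma}}=\frac{(b-1)^2}{2}-S_\sigma$ -- matches the paper. But there is a genuine gap in where you locate the $\Sigma$-sensitivity and in how you propose to aggregate it. In the actual computation the whole quantity $(n-2l_n)\bigl(\frac{(b-1)^2}{4b}-\frac{1}{b^2}\sum_{a}\sigma(a)a\bigr)$ arises inside the \emph{single} coefficient $\mu_{(-1,-1),(0,0),(1,1)}=\int_{[0,1)^2}D_N$: it comes from the diagonal terms $k_1=k_2$ of $\sum_r x_r^{(1)}x_r^{(2)}$, where the scrambled digit $\sigma_k(a_k)$ multiplies the unscrambled digit $a_k$ at the \emph{same} position, and the count of positions with $\sigma_k=\sigma$ versus $\sigma_k=\overline{\sigma}$ produces the factor $2l_n-n$. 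The face coefficients $\bsj=(j_1,-1)$, $(-1,j_2)$ and the intermediate two-dimensional boxes, where you place the critical contribution, are bounded by $(b^2-1)b^{-n-j_i}$ and $\bigl(\frac{b-1}{2}\bigr)^2b^{-2n}$ respectively, \emph{uniformly in} $\Sigma$; they involve exponential sums of the form $\sum_{k}\sigma^{\pm1}(k){\rm e}^{2\pi\ii k\ell/b}$, not $\sum_a\sigma(a)a$, and they already sum to the admissible order $N^{r-1}(\log N)^{1/q}$ for every $\Sigma$.

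More importantly, your fourth step -- ``sum these contributions across the $n$ digit positions with sign $+$ or $-$'' -- cannot be executed if those contributions sit in distinct Haar coefficients: the equivalent norm of Proposition~\ref{equivalence} is an $\ell_q$ over levels of $\ell_p$ norms of the \emph{moduli} $|\mu_{\bsj,\bsm,\bsl}|$, so signed cancellation or reinforcement between coefficients at different levels is invisible to the norm. The concentration of the entire $\Sigma$-dependence in one coefficient is exactly what makes both implications work: sufficiency because that one coefficient is $O(N^{-1}(\log N)^{1/q})$ under either hypothesis while everything else is handled as in the unscrambled case, and necessity because the norm dominates $|\mu_{(-1,-1),(0,0),(1,1)}|\gg |2l_n-n|\,N^{-1}$ when $\frac1b\sum_a\sigma(a)a\neq\frac{(b-1)^2}{4}$ -- note the exponent is $N^{-1}$, not the $N^{r-1}$ you claim, since the weight attached to the level $(-1,-1)$ is a constant. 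To repair your argument you would need to redo the computation of the zeroth coefficient along the lines of your step four, but carried out entirely inside $\int x^{(1)}x^{(2)}$, rather than distributed over the face levels.
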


\begin{theorem} \label{theosym}  
Let $1\leq p,q \leq \infty$ and $0\leq r < \frac{1}{p}$. Then for any integer $b\geq 2$ we have 
$$ \left\|D_{\widetilde{N}}(\cR_{b,n}^{\Sigma,\sym}) \mid S_{p,q}^rB([0,1)^2)\right\| \ll \widetilde{N}^{r-1}(\log{\widetilde{N}})^{\frac{1}{q}} $$
independently of $\Sigma$.
\end{theorem}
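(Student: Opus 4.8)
The plan is to reduce the statement to the Haar coefficient analysis already carried out for Theorem~\ref{theohamm} and to extract a sign cancellation produced by the symmetrization. First I would record the elementary decomposition: since $\cR_{b,n}^{\Sigma,\sym}=\cR_{b,n}^{\Sigma}\cup\cR_{b,n}^{\Sigma^{\ast}}$ is a union of two $b^n$-point sets and $\widetilde N=2b^n$, one has
$$D_{\widetilde N}(\cR_{b,n}^{\Sigma,\sym},\bst)=\tfrac12 D_{b^n}(\cR_{b,n}^{\Sigma},\bst)+\tfrac12 D_{b^n}(\cR_{b,n}^{\Sigma^{\ast}},\bst).$$
Note that $l_n(\Sigma^{\ast})=n-l_n(\Sigma)$, while $\tfrac1b\sum_{a=0}^{b-1}\sigma(a)a$ depends only on the common base permutation $\sigma$. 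Hence if $\tfrac1b\sum_{a=0}^{b-1}\sigma(a)a=\tfrac{(b-1)^2}{4}$, Theorem~\ref{theohamm} applies directly to both $\cR_{b,n}^{\Sigma}$ and $\cR_{b,n}^{\Sigma^{\ast}}$, and the claim follows from the triangle inequality for $\|\cdot\mid S_{p,q}^rB([0,1)^2)\|$ together with $b^n\asymp\widetilde N$ and $\log b^n\asymp\log\widetilde N$. The substance of the theorem is therefore the complementary case, where neither half need satisfy the desired estimate on its own and a genuine cancellation must be extracted.

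In that case I would pass to the $b$-adic Haar expansion. Using the characterization of the $S_{p,q}^rB([0,1)^2)$-norm by the $b$-adic Haar coefficients given in Section~\ref{Besov}, $\|D_{\widetilde N}(\cR_{b,n}^{\Sigma,\sym},\cdot)\mid S_{p,q}^rB([0,1)^2)\|$ is equivalent to a weighted $\ell_q$-over-levels, $\ell_p$-within-level norm of the coefficients $\mu_{\bsj,\bsm}:=\langle D_{\widetilde N}(\cR_{b,n}^{\Sigma,\sym},\cdot),h_{\bsj,\bsm}\rangle$. By \eqref{warumsym} a short count shows $D_{b^n}(\cR_{b,n}^{\Sigma^{\ast}},(t_1,t_2))=-D_{b^n}(\cR_{b,n}^{\Sigma},(1-b^{-n}-t_1,t_2))$ up to an additive term of order $b^{-n}$, so that
$$\mu_{\bsj,\bsm}=\tfrac12\Big(\big\langle D_{b^n}(\cR_{b,n}^{\Sigma},\cdot),h_{\bsj,\bsm}\big\rangle-\big\langle D_{b^n}(\cR_{b,n}^{\Sigma},(1-b^{-n}-t_1,t_2)),h_{\bsj,\bsm}\big\rangle\Big)+O(b^{-n}).$$
Reflecting the first argument of a $b$-adic Haar function of level $j_1$ produces, up to a unimodular factor, another $b$-adic Haar function of the same level; hence the second inner product is (up to that factor and the $b^{-n}$ error) a Haar coefficient of $D_{b^n}(\cR_{b,n}^{\Sigma},\cdot)$ at the same level $\bsj$ but at a ``reflected'' index $\widehat{\bsm}$. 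Thus $\mu_{\bsj,\bsm}$ is, up to $O(b^{-n})$, half the difference of two Haar coefficients of $D_{b^n}(\cR_{b,n}^{\Sigma},\cdot)$ lying in one and the same level.

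The conclusion then rests on the explicit formulas for the Haar coefficients of $D_{b^n}(\cR_{b,n}^{\Sigma},\cdot)$ that form the technical core of the proof of Theorem~\ref{theohamm}. On all but the ``critical'' levels (in the dyadic two-dimensional prototype, the antidiagonal levels with $j_1+j_2\approx n$; the general $b$-adic picture is analogous) those coefficients already satisfy, term by term, the estimates needed to produce the order $\widetilde N^{r-1}(\log\widetilde N)^{1/q}$, so these levels are harmless. On the critical levels the explicit formula exhibits $\mu^{\Sigma}_{\bsj,\bsm}$ as coinciding with the reflected coefficient (including the unimodular factor) — this is exactly Davenport's reflection principle, and is precisely why the plain van der Corput/Hammersley defect is even about the centre and survives the passage to the reflected copy — so these contributions cancel in the difference above. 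Substituting the resulting bounds into the Haar-coefficient norm yields $\|D_{\widetilde N}(\cR_{b,n}^{\Sigma,\sym},\cdot)\mid S_{p,q}^rB([0,1)^2)\|\ll\widetilde N^{r-1}(\log\widetilde N)^{1/q}$ uniformly in $\Sigma$.

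I expect the main obstacle to be this last step: pinning down the explicit $b$-adic Haar coefficients of $D_{b^n}(\cR_{b,n}^{\Sigma},\cdot)$ on the critical levels and verifying their reflection invariance — i.e. making Davenport's principle rigorous at the level of individual Haar coefficients while correctly absorbing the $1-b^{-n}$ shift and the $O(b^{-n})$ counting error — and then carrying out the (routine but lengthy) level-by-level summation of the non-critical coefficients to confirm that they contribute only the admissible order for every $1\le p,q\le\infty$ and $0\le r<\tfrac1p$.
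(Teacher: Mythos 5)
Your starting decomposition $D_{\widetilde N}(\cR_{b,n}^{\Sigma,\sym},\cdot)=\tfrac12 D_{N}(\cR_{b,n}^{\Sigma},\cdot)+\tfrac12 D_{N}(\cR_{b,n}^{\Sigma^{\ast}},\cdot)$, hence $\mu_{\bsj,\bsm,\bsl}^{\Sigma,\sym}=\tfrac12\bigl(\mu_{\bsj,\bsm,\bsl}^{\Sigma}+\mu_{\bsj,\bsm,\bsl}^{\Sigma^{\ast}}\bigr)$, is exactly the paper's Lemma~\ref{HCSYM}, but from there your argument locates the cancellation in the wrong place. In the analysis of Section~\ref{haarcoeff} every Haar coefficient with $\bsj\neq(-1,-1)$ of $D_N(\cR_{b,n}^{\Sigma},\cdot)$ --- including those on the antidiagonal levels $j_1+j_2\approx n$ that you call critical --- already satisfies, individually and with bounds \emph{independent of} $\Sigma$ (Lemma~\ref{summary}), precisely the estimates needed for the order $\widetilde N^{r-1}(\log\widetilde N)^{1/q}$; for these the plain triangle inequality $|\mu_{\bsj,\bsm,\bsl}^{\Sigma,\sym}|\le\tfrac12(|\mu_{\bsj,\bsm,\bsl}^{\Sigma}|+|\mu_{\bsj,\bsm,\bsl}^{\Sigma^{\ast}}|)$ suffices and no cancellation is needed or used. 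The entire obstruction to optimality of a single $\cR_{b,n}^{\Sigma}$ sits in the one coefficient $\mu_{(-1,-1),(0,0),(1,1)}=\tfrac1N(n-2l_n)\bigl(\tfrac{(b-1)^2}{4b}-\tfrac1{b^2}\sum_{a=0}^{b-1}\sigma(a)a\bigr)+\tfrac1{2N}+\tfrac1{4N^2}$ (Lemma~\ref{haar1}), which is what the necessity part of Theorem~\ref{theohamm} isolates. Symmetrization kills exactly this term: $\Sigma^{\ast}$ has $n-l_n$ components equal to $\sigma$, so $(n-2l_n)$ is replaced by $-(n-2l_n)$ and the average collapses to $\tfrac1{\widetilde N}+\tfrac1{\widetilde N^2}$. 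Your proposal never addresses this coefficient, which is the actual content of the theorem; after it is handled, one simply reruns the level-by-level summation from the proof of Theorem~\ref{theohamm}.

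The cancellation you do assert --- that on the levels $j_1+j_2\approx n$ the coefficients of $\cR_{b,n}^{\Sigma}$ coincide with their reflected counterparts and therefore cancel in the difference --- is not only unproved but false. On those levels all but $b^n$ of the coefficients come from $b$-adic boxes containing no point of the set and equal the pure volume-part value $b^{-2|\bsj|-2}\bigl({\rm e}^{\frac{2\pi\ii}{b}\ell_1}-1\bigr)^{-1}\bigl({\rm e}^{\frac{2\pi\ii}{b}\ell_2}-1\bigr)^{-1}$ (Lemma~\ref{summary}); these are identical for $\cR_{b,n}^{\Sigma}$ and $\cR_{b,n}^{\Sigma^{\ast}}$, they manifestly do not cancel in the average, and they alone already account for the full contribution $\widetilde N^{r-1}(\log\widetilde N)^{1/q}$ demanded by the lower bound \eqref{lowerbound}. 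Moreover, your reflection identity for Haar functions degrades exactly where you need it: the map $t_1\mapsto 1-b^{-n}-t_1$ carries $h_{j_1,m_1,\ell_1}$ to a unimodular multiple of a Haar function with reflected position and conjugated frequency only up to a translation by $b^{-n}$, and on levels $j_1\approx n$ this translation is comparable to the interval length $b^{-j_1-1}$, so the reflected inner product is not a Haar coefficient at the same level. So the key step of your argument fails, while the correct route is the much shorter one via Lemmas~\ref{haar1}, \ref{summary} and \ref{HCSYM}.
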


\begin{remark} \rm In the theorems above we have to require $r<\frac{1}{p}$ to ensure that the indicator functions appearing in the definition of the local discrepancy are contained in $S_{p,q}^rB([0,1)^s)$ (see \cite[Proposition 6.3]{Tri10}). We must also require $r\geq 0$, since the symmetrized van der Corput sequence and the modified Hammersley point sets cannot provide optimal $S_{p,q}^rB$-discrepancy in the case $r<0$ as we will see in the proofs in Section~\ref{secproofthm}.
\end{remark}

To derive results on the $L_p$-discrepancy from the above theorems we use embedding theorems between the Besov space $S_{p,q}^rB([0,1)^s)$ and the
Triebel-Lizorkin space $S_{p,q}^rF([0,1)^s)$ of dominating mixed smoothness. Since we do not prove any results on the Triebel-Lizorkin space norm of the local discrepancy, we refer the interested reader to \cite{Mar2013, Mar2013a, Mar2013b, Tri10} for the definition of this space. In \cite{Tri10} we also find the embeddings $$S_{p,\min\{p,q\}}^rB([0,1)^s)\hookrightarrow S_{p,q}^rF([0,1)^s) \hookrightarrow S_{p,\max\{p,q\}}^rB([0,1)^s)$$ for $0<p,q \leq \infty$ and $$S_{p_1,q}^rF([0,1)^s) \hookrightarrow S_{q,q}^rB([0,1)^s) \hookrightarrow S_{p_2,q}^rF([0,1)^s)$$ for $0<p_2 \leq q\leq p_1<\infty$, which lead to the following corollary together with the identity $$S_{p,q}^rF([0,1)^s)=L_p([0,1)^s)$$ for $q=2$ and $r=0$ (see e.g. \cite[Remark 4.23]{Mar2013b}).

\begin{corollary}\label{lp} We have the following estimates of the $L_p$-discrepancy for $p \in [1,\infty)$ and all $b \geq 2$:
\begin{itemize}
  \item $\left\|D_N(\cV_{b}^{\sym}) \mid L_p([0,1))\right\| \ll N^{-1}(\log{N})^{\frac{1}{2}}$ for all $N\geq 2$,
	\item $\left\|D_N(\cR_{b,n}^{\Sigma}) \mid L_p([0,1)^2)\right\| \ll N^{-1}(\log{N})^{\frac{1}{2}},$ if and only if $|2l_n-n|=O(\sqrt{n})$ or $\frac{1}{b}\sum_{a=0}^{b-1}\sigma(a)a=\frac{(b-1)^2}{4}$
	\item $\left\|D_{\widetilde{N}}(\cR_{b,n}^{\Sigma,\sym}) \mid L_p([0,1)^2)\right\| \ll \widetilde{N}^{-1}(\log{\widetilde{N}})^{\frac{1}{2}}$ independently of $\Sigma$.
\end{itemize}
These inequalities show that we achieve optimal $L_p$-discrepancy with respect to the order of magnitude in $N$ or $\widetilde{N}$ in all three cases.
\end{corollary}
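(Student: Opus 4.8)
The plan is to obtain Corollary~\ref{lp} as a direct consequence of Theorems~\ref{theovdc}, \ref{theohamm} and \ref{theosym} by feeding them through the embedding theorems quoted just above the corollary, together with the identity $S_{p,2}^0F([0,1)^s)=L_p([0,1)^s)$. Throughout one works with the Besov parameters $r=0$ and $q=2$, so that the exponent $N^{r-1}(\log N)^{1/q}$ furnished by the theorems becomes $N^{-1}(\log N)^{1/2}$, which is exactly the order of $L_p$-discrepancy claimed. The only genuine choice is how to relate the parameter $p$ of the target space $L_p=S_{p,2}^0F$ to the available Besov estimates, and for this one distinguishes the ranges $p\in[2,\infty)$ and $p\in[1,2)$.

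For $p\in[2,\infty)$ we have $\min\{p,2\}=2$, so the first embedding chain gives $S_{p,2}^0B([0,1)^s)\hookrightarrow S_{p,2}^0F([0,1)^s)=L_p([0,1)^s)$, hence $\|D_N(\cP)\mid L_p\|\ll\|D_N(\cP)\mid S_{p,2}^0B\|$ for any finite point set or infinite sequence $\cP$. Applying Theorem~\ref{theovdc} with $r=0$, and Theorems~\ref{theohamm} and \ref{theosym}, all with this $p$ and $q=2$, then yields the desired upper bounds of order $N^{-1}(\log N)^{1/2}$; note that for $q=2$ the condition $|2l_n-n|=O(n^{1/q})$ of Theorem~\ref{theohamm} reads precisely $|2l_n-n|=O(\sqrt n)$. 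For $p\in[1,2)$ I would instead use the monotonicity $\|D_N(\cP)\mid L_p\|\le\|D_N(\cP)\mid L_2\|$ on the probability space $[0,1)^s$, together with $L_2([0,1)^s)=S_{2,2}^0F([0,1)^s)$ and the two-sided embedding $S_{2,2}^0B\hookrightarrow S_{2,2}^0F\hookrightarrow S_{2,2}^0B$, which gives $\|D_N(\cP)\mid L_2\|\asymp\|D_N(\cP)\mid S_{2,2}^0B\|$; then apply the three theorems with $p=q=2$ and $r=0$. In both ranges this settles the upper bounds in the first and third items of the corollary unconditionally and proves the \emph{if}-direction of the second item, the relevant condition being, for $q=2$, exactly the one stated.

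For the \emph{only if}-direction of the second item I would run the argument backwards. If $p\in[1,2)$, then $\max\{p,2\}=2$ and the first chain gives $L_p([0,1)^2)=S_{p,2}^0F([0,1)^2)\hookrightarrow S_{p,2}^0B([0,1)^2)$, so $\|D_N(\cR_{b,n}^{\Sigma})\mid S_{p,2}^0B\|\ll\|D_N(\cR_{b,n}^{\Sigma})\mid L_p\|$; if $p\in[2,\infty)$, monotonicity and the equivalence $\|\cdot\mid L_2\|\asymp\|\cdot\mid S_{2,2}^0B\|$ give $\|D_N(\cR_{b,n}^{\Sigma})\mid S_{2,2}^0B\|\ll\|D_N(\cR_{b,n}^{\Sigma})\mid L_p\|$. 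Either way an $L_p$-bound of order $N^{-1}(\log N)^{1/2}$ forces a Besov bound of order $N^{r-1}(\log N)^{1/q}$ with $r=0$ and $q=2$, and the \emph{only if}-part of Theorem~\ref{theohamm} then returns the condition $|2l_n-n|=O(\sqrt n)$ or $\frac{1}{b}\sum_{a=0}^{b-1}\sigma(a)a=\frac{(b-1)^2}{4}$. It remains to justify optimality: for the one-dimensional sequence $\cV_b^{\sym}$ the lower bound \eqref{lowproinovseq} with $s=1$ shows $\|D_N(\cV_b^{\sym})\mid L_p\|\gg N^{-1}(\log N)^{1/2}$ for infinitely many $N$ when $p\in(1,\infty)$, while for the two-dimensional sets $\cR_{b,n}^{\Sigma}$ and $\cR_{b,n}^{\Sigma,\sym}$ the lower bound \eqref{lowproinov} with $s=2$ gives $\gg N^{-1}(\log N)^{1/2}$ for the respective numbers of points; the remaining case $p=1$ in both situations is covered by Hal\'{a}sz's result \cite{hala}.

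I do not expect a real obstacle here, since the corollary lies downstream of the three theorems; the points that need care are purely organisational. First, one must consistently take $q=2$, so that the $q$-dependent condition of Theorem~\ref{theohamm} specialises to the $O(\sqrt n)$ appearing in the corollary. Second, one must use the embeddings in the correct direction in each half of the argument: $S_{p,2}^0B\hookrightarrow L_p$ for $p\ge 2$ when proving the upper bounds, and $L_p\hookrightarrow S_{p,2}^0B$ for $p\le 2$ when proving the \emph{only if}-direction, with the borderline case $p=2$ absorbed into the norm equivalence $L_2=S_{2,2}^0F=S_{2,2}^0B$.
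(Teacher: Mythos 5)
Your argument is correct and follows essentially the route the paper intends: Corollary~\ref{lp} is read off from Theorems~\ref{theovdc}--\ref{theosym} with $r=0$ and $q=2$ via the quoted embeddings and the identity $S_{p,2}^0F([0,1)^s)=L_p([0,1)^s)$, with the necessity in the second item coming from the \emph{only if} part of Theorem~\ref{theohamm} and the optimality from \eqref{lowproinov}, \eqref{lowproinovseq} and Hal\'asz's $L_1$ results, exactly as you describe. The only cosmetic difference is that for $p<2$ you invoke monotonicity of the $L_p$-norms on $[0,1)^s$, where the paper's second embedding chain $S_{2,2}^0B\hookrightarrow S_{p,2}^0F=L_p$ (valid for $p\le 2$) would serve directly; both are correct.
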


The structure of this paper is as follows: In the next Section~\ref{discussion}, we will show in which sense our results generalize previous results. In Section~\ref{Haarfkt}, we introduce the $b$-adic Haar function system and the Haar coefficients which will be a basic tool for our proofs. In Section~\ref{Besov}, we explain the Besov space norm and present a useful equivalent norm. Section~\ref{haarcoeff} is the most technical part of this paper and aims at finding upper bounds on the Haar coefficients of our sequences and point sets of interest. We will use these upper bounds in the subsequent Section~\ref{secproofthm} to finally prove the central theorems of this paper. 

\section{Discussion of the results}  \label{discussion}

To put our results into context, we point out in which sense they generalize previous results. Further, we provide a surprising insight into the optimal order of the $S_{p,q}^rB$-discrepancy of one-dimensional sequences for $r>0$. \\

It was recently proven in \cite{KP2015} that the symmetrized van der Corput sequence in base 2 achieves optimal
$L_p$-discrepancy for all $p \in [1,\infty)$. Corollary~\ref{lp} shows that the same is true for every base $b \geq 2$. \\

The digit scrambled Hammersley point sets in the sense of Definition~\ref{def2} were initially introduced by Faure \cite{fau81}. The $L_2$-discrepancy of these point sets was calculated exactly in \cite[Theorem 2]{FPPS09}. It follows from this exact formula, that the $L_2$-discrepancy of the digit scrambled Hammersley point set is of optimal order
$$N^{-1}(\log{N})^{\frac{1}{2}}$$ if and only if $|2l_n-n|=O(\sqrt{n})$  or $\frac{1}{b}\sum_{a=0}^{b-1}\sigma(a)a=\frac{(b-1)^2}{4}$ (with the notation in Definition~\ref{def2}). We remark that Corollary~\ref{lp} generalizes this fact to arbitrary $p\in [1,\infty)$. \\

Theorem~\ref{theohamm} can be regarded as a generalization of \cite[Theorem 1.1]{Mar2013}, where only digit scrambled Hammersley point sets with $\Sigma \in \{\mathrm{id}, \tau\}^n$ ($\mathrm{id}$ means the identity) were considered. By allowing general permutations $\sigma \in \mathfrak{S}_b$, it might happen that 
\begin{equation}\label{sigmagut} \frac{1}{b}\sum_{a=0}^{b-1}\sigma(a)a=\frac{(b-1)^2}{4}.\end{equation}
We therefore find a significantly higher number of two-dimensional point sets with optimal $S_{p,q}^rB$-discrepancy.
 We give examples for permutations $\sigma$ fulfilling \eqref{sigmagut} that were discovered in
\cite{FP09}. We choose $\sigma={\rm id}_l$ for $l\in \{0,1,\dots,b-1\}$, where
${\rm id}_l(a):=a \oplus l$ for $a\in \{0,1,\dots,b-1\} $ ($\oplus$ denotes addition modulo $b$). Then we have
\begin{align*}
   \sum_{a=0}^{b-1}{\rm id}_l(a)a&=  \sum_{a=0}^{b-1}(a \oplus l)a = \sum_{a=0}^{b-l-1}(a+l)a+\sum_{a=b-l}^{b-1}(a+l-b)a\\
	 &= \sum_{a=0}^{b-1}(a+l)a-b\sum_{a=b-l}^{b-1}a=\frac{b}{6}(1+2b^2+3l^2-3b(1+l)).
\end{align*}
Hence, \eqref{sigmagut} is fulfilled if and only if
$$ \frac{b^2-1}{12}=\frac{l(b-l)}{2}. $$
The pairs $(b,l)$ for which this equality is satisfied were given in \cite[Corollary 1]{FP09}. One could for instance choose $b=5$ and $l=1$. 
In \cite{FPPS09}, further explicit examples and constructions for permutations which fulfil \eqref{sigmagut} were presented. \\

It was shown in \cite{HKP14} that the symmetrized Hammersley point set in base 2 achieves optimal $L_p$-discrepancy for all $p\in[1,\infty)$. For the $L_2$-discrepancy, this result was already obtained in \cite{CS03} and \cite{lp2001} with aid of Walsh functions for slightly different variants of symmetrized Hammersley point sets. In the current paper, we give an appropriate definition of such symmetrized Hammersley point sets in arbitrary bases which have optimal $L_p$-discrepancy too (see Corollary~\ref{lp}). \\

Finally, we should comment on the results in Theorem~\ref{theovdc}. We notice that the logarithmic term $(\log{N})^{\frac{1}{q}}$ does only appear if $r=0$, whereas for $0<r<\frac{1}{p}$ we solely have the main term $N^{r-1}$. Thus, in the latter case we have the same optimal order of $S_{p,q}^rB$-discrepancy for one-dimensional sequences as for one-dimensional point sets, which is not the case for the $L_p$- or the star discrepancy. 
 The question arises whether the logarithmic term for $r=0$ can be omitted or not. This is certainly not the case for $q=2$, since what we get then is the $L_p$-discrepancy for which we have the inequality \eqref{lowproinovseq}, but may be the case for other values of $q$. Also, it would be very interesting to investigate if the fact that point sets and sequences achieve the same best possible order for the $S_{p,q}^rB$-discrepancy in the case $r>0$ appears in higher dimensions $s\geq 2$ too.

\section{The $b$-adic Haar basis} \label{Haarfkt}

In order to estimate the $L_p$- and the $S_{p,q}^rB$-discrepancy of $\cV_{b}^{\sym}$, $\cR_{b,n}^{\Sigma}$ and $\cR_{b,n}^{\Sigma,\sym}$ we use the Haar function system in base $b$. Haar functions are a useful and often applied tool in discrepancy theory, see e.g. \cite{DHP,hin2010,HKP14,KP2015,Mar2013,Mar2013a,Mar2013b}.
Additionally, Haar functions open the door for the investigation of the local discrepancy in further function spaces such as the BMO or the exponential Orlicz spaces (see e.g. \cite{bilyk}, another paper where dyadic digit scrambled Hammersley point sets were considered). \\

Let $b\geq 2$ be an integer. For $j \in \NN_0$ we define $\DD_j := \{0,1,\dots,b^j-1\}$ and $\BB_j:=\{1,\dots,b-1\}$. Additionally, we define the sets $\DD_{-1}:=\{0\}$ and $\BB_{-1}:=\{1\}$. For $j\in\NN_{0}$ and $m \in \DD_j$ we call the interval
$$ I_{j,m}:= \left[\frac{m}{b^j},\frac{m+1}{b^j}\right) $$
the $m$-th $b$-adic interval on level $j$. We also define $I_{-1,0}=\left[0,1\right)$, which is a
$b$-adic interval on level $0$. For $j \in \NN_{0}$, $m \in \DD_j$ and any $k\in \{0,1,\dots,b-1\}$ we introduce the interval 
$$I_{j,m}^k:=I_{j+1,bm+k}=\left[\frac{m}{b^j}+\frac{k}{b^{j+1}},\frac{m}{b^j}+\frac{k+1}{b^{j+1}}\right).$$ It is easy to see that $I_{j,m}=\bigcup_{k=0}^{b-1} I_{j,m}^k$ and $I_{j,m}^{k_1}\cap I_{j,m}^{k_2}=\emptyset$ whenever $k_1 \neq k_2$. We also put $I_{-1,0}^1=I_{-1,0}=\left[0,1\right)$. \\
For $j\in \NN_0$, $m \in \DD_j$ and $\ell \in \BB_j$ let $h_{j,m,\ell}$ be a function
on $\left[0,1\right)$ with support in $I_{j,m}$ and the constant value ${\rm e}^{\frac{2\pi\ii}{b}k\ell}$ on $I_{j,m}^k$ for any $k\in \{0,1,\dots, b-1\}$ and $0$ outside of
$I_{j,m}$. We call $h_{j,m,\ell}$ a $b$-adic Haar function on $\left[0,1\right)$. We also
put $h_{-1,0,1}=\bsone_{I_{-1,0}}=\bsone_{\left[0,1\right)}$ on $\left[0,1\right)$. It was shown in \cite[Theorem 2.1]{Mar2013} that the system
$$ \left\{ b^{\frac{\max\{0,j\}}{2}}h_{j,m,\ell}: j\in \NN_{-1}, m\in \DD_j, \ell \in \BB_j \right\}  $$
(where here and later on we use the abbreviation $\NN_{-1}:=\NN_{0}\cup\{-1\}$) is an orthonormal basis of $L_2(\left[0,1\right))$ and an unconditional basis of $L_p(\left[0,1\right))$ for $p \in (1,\infty)$. We speak of an one-dimensional $b$-adic Haar basis. \\

 To extend this definition to arbitrary dimensions $s$, for $\bsj=(j_1,j_2,\dots,j_s) \in \NN_{-1}^s$ and $\bsm=(m_1,m_2,\dots,m_s)\in \DD_{j_1}\times\DD_{j_2}\times \dots \times \DD_{j_s} =:\DD_{\bsj}$ we define the
$s$-dimensional $b$-adic interval $I_{\bsj,\bsm}:=I_{j_1,m_1}\times I_{j_2,m_2} \times \dots \times I_{j_s,m_s}$. For $\bsk=(k_1,k_2,\dots,k_s)$, where $k_i\in \{0,1,\dots,b-1\}$ if $j_i \in \NN_0$ and $k_i=1$ if $j_i=-1$ for $i\in\{1,\dots,s\}$, we put $I_{\bsj,\bsm}^{\bsk}:=I_{j_1,m_1}^{k_1}\times I_{j_2,m_2}^{k_2} \times \dots \times I_{j_s,m_s}^{k_s}$. Finally, for $\bsj=(j_1,j_2,\dots,j_s) \in \NN_{-1}^s$, $\bsm=(m_1,m_2,\dots,m_s)\in\DD_{\bsj}$ and $\bsl=(\ell_1,\ell_2,\dots,\ell_s)\in \BB_{j_1}\times \BB_{j_2} \times \dots \times \BB_{j_s}=:\BB_{\bsj}$ we define the $s$-dimensional $b$-adic Haar function $h_{\bsj,\bsm,\bsl}(\bsx):=h_{j_1,m_1,\ell_1}(x_1)h_{j_2,m_2,\ell_2}(x_2)\dots h_{j_s,m_s,\ell_s}(x_s)$ for $\bsx=(x_1,,x_2,\dots,x_s)\in[0,1)^s$. Then the system 
$$\left\{b^{\frac{|\bsj|}{2}}h_{\bsj,\bsm,\bsl}:\bsj \in \NN_{-1}^s,\bsm \in \DD_{\bsj}, \bsl \in \BB_{\bsj}\right\},$$ where $|\bsj|:=\max\{0,j_1\}+\max\{0,j_2\}+\dots +\max\{0,j_s\}$, is an orthonormal basis of $L_2(\left[0,1\right)^s)$ and an unconditional basis of $L_p(\left[0,1\right)^s)$ for $1<p<\infty$ (see again \cite[Theorem 2.1]{Mar2013}).
\\

The {\it Haar coefficients} of a function $f$ are defined as \begin{equation} \label{haarc}\mu_{\bsj,\bsm,\bsl}(f):=\langle f,h_{\bsj,\bsm,\bsl} \rangle=\int_{[0,1)^s} f(\bst)h_{\bsj,\bsm,\bsl}(\bst)\rd \bst\ \ \ \mbox{for $\bsj\in \NN_{-1}^s$, $\bsm \in \mathbb{D}_{\bsj}$} \mbox{ and } \bsl \in \BB_{\bsj}.\end{equation}

\section{The Besov spaces of dominating mixed smoothness} \label{Besov}

\begin{comment}We introduce a $b$-adic version of the Besov space that was already considered in $\cite{Mar2013, Mar2013a, Mar2013b}$. \end{comment}

We give a definition of the classical dyadic Besov spaces of dominating mixed smoothness.
Let therefore $\cS(\RR^s)$ denote the Schwartz space and $\cS'(\RR^s)$ the space of tempered distributions on $\RR^s$.
For $f \in \cS'(\RR^s)$ we denote by $\cF f$ the Fourier transform of $f$ and by $\cF^{-1} f$ its inverse. Let $\phi_0\in \cS(\RR)$ satisfy $\phi_0(t)=1$ for $|t|\leq 1$ and $\phi_0(t)=0$ for $|t|> \frac{3}{2}$. Let
$$ \phi_d(t)=\phi_0(2^{-d}t)-\phi_0(2^{-d+1}t), $$
where $t\in \RR, d\in \NN$, and $\phi_{\bsd}(\bst)=\phi_{d_1}(t_1)\cdots \phi_{d_s}(t_s),$
where $\bsd=(d_1,\dots,d_s)\in\NN_0^s$, $\bst=(t_1,\dots,t_s)\in \RR^s$. We note that $\sum_{\bsd \in \NN_0^s}\phi_{\bsd}(\bst)=1$ for all $\bst \in \RR^s$. The functions $\cF^{-1}(\phi_{\bsd}\cF f)$ are entire analytic functions for any $f \in \cS'(\RR^s)$. Let $0<p,q\leq \infty$ and $r\in \RR$. The dyadic Besov space $S_{p,q}^rB(\RR^s)$ of dominating mixed smoothness consists of all $f \in \cS'(\RR^s)$ with finite quasi-norm
$$ \left\|f\mid S_{p,q}^rB(\RR^s) \right\| =\left(\sum_{\bsd \in \NN_0^s}2^{r(d_1+\dots+d_s)q}\left\|\cF^{-1}(\phi_{\bsd} \cF f) \mid L_p(\RR^s)\right\|^q\right)^{\frac{1}{q}}, $$
with the usual modification if $q=\infty$.
Let $\cD([0,1)^s)$ be the set of all complex-valued infinitely differentiable functions on $\RR^s$ with compact support in the interior of $[0,1)^s$ and let $\cD'([0,1)^s)$ be its dual space of all distributions in $[0,1)^s$. The Besov space $S_{p,q}^rB([0,1)^s)$ of dominating mixed smoothness on the domain $[0,1)^s$ consists of all functions $f \in \cD'([0,1)^s)$ with finite quasi norm
$$ \left\|f\mid S_{p,q}^rB([0,1)^s)\right\|=\inf{\left\{\left\|g \mid S_{p,q}^rB(\RR^s)\right\|:g \in S_{p,q}^rB(\RR^s), g|_{[0,1)^s}=f\right\}} $$
However, the dyadic definition of the Besov space norms is not suitable to estimate the discrepancy of sequences and point sets which are based on the $b$-adic expansion of integers. To overcome this drawback, $b$-adic versions of the Besov spaces $S_{p,q}^rB^b(\RR^s)$ and $S_{p,q}^rB^b([0,1)^s)$ have been introduced by Markhasin in $\cite{Mar2013, Mar2013b}$. We refer to these papers for the definition of the $b$-adic Besov spaces.
It was shown in \cite[Theorem 3.1]{Mar2013} that the $b$-adic Besov space $S_{p,q}^rB^b([0,1)^s)$ is equivalent to the classical dyadic
Besov space $S_{p,q}^rB([0,1)^s)$ and that we have the following useful characterization of functions which are contained in this space (see also \cite[Theorem 2.41]{Tri10} for the original proof of the dyadic case): \\
\begin{proposition} \label{equivalence} 
Let $0<p,q\leq \infty$ and $\frac{1}{p}-1<r<\min\left\{\frac{1}{p},1\right\}$. Let $f \in \cD'([0,1)^s)$.
Then $f \in S_{p,q}^rB^b([0,1)^s)$ if and only if it can be represented as
$$ f=\sum_{\bsj \in \NN_{-1}^s}\sum_{\bsm \in \DD_{\bsj},\bsl\in \BB_{\bsj}}\mu_{\bsj, \bsm, \bsl}b^{|\bsj|}h_{\bsj,\bsm,\bsl} $$
for some sequence $(\mu_{\bsj,\bsm,\bsl})$ satisfying
\[  \left(\sum_{\bsj \in \NN_{-1}^s}b^{(j_1+\dots+j_s)\left(r-\frac{1}{p}+1\right)q}\left( \sum_{\bsm \in \DD_{\bsj},\bsl\in \BB_{\bsj}}\left| \mu_{\bsj,\bsm,\bsl}\right|^p\right)^{\frac{q}{p}}\right)^{\frac{1}{q}}<\infty, \]
where the convergence is unconditional in $\cD'([0,1)^s)$ and in any $S_{pq}^{\rho}B^b([0,1)^s)$ with $\rho< r$. This representation of $f$ is unique with the $b$-adic Haar coefficients as defined in \eqref{haarc}.
The expression on the left-hand-side of the above inequality provides an equivalent quasi-norm on $S_{p,q}^rB^b([0,1)^s)$, i.e.
\begin{equation*} \left\|f\mid S_{p,q}^rB^b([0,1)^s) \right\| \asymp  \left(\sum_{\bsj \in \NN_{-1}^s}b^{(j_1+\dots+j_s)\left(r-\frac{1}{p}+1\right)q}\left( \sum_{\bsm \in \DD_{\bsj},\bsl\in \BB_{\bsj}}\left| \mu_{\bsj,\bsm,\bsl}\right|^p\right)^{\frac{q}{p}}\right)^{\frac{1}{q}}. \end{equation*}
\end{proposition}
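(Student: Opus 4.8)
The plan is to follow the route of Triebel~\cite[Theorem 2.41]{Tri10} and Markhasin~\cite[Theorem 3.1]{Mar2013}: one proves the two-sided estimate between the $b$-adic Besov quasi-norm of $f$ and the sequence-space quasi-norm of its Haar coefficients by treating the synthesis (``if'') and the analysis (``only if'') directions separately, and obtains uniqueness of the expansion from the fact that $\{b^{|\bsj|/2}h_{\bsj,\bsm,\bsl}\}$ is an orthonormal basis of $L_2([0,1)^s)$ together with the embedding of $S_{p,q}^rB^b([0,1)^s)$ into $L_1$ on the loc, valid for $r>\frac1p-1$, which forces two functions with the same Haar coefficients to coincide. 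A direct argument for the $b$-adic system is genuinely needed here, since for general $b$ the $b$-adic Haar functions are not built from dyadic Haar functions, so one cannot simply combine the equivalence $S_{p,q}^rB^b\asymp S_{p,q}^rB$ with the classical dyadic wavelet characterisation.

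For the synthesis direction, given a sequence $(\mu_{\bsj,\bsm,\bsl})$ with finite sequence norm I would set $f=\sum_{\bsj\in\NN_{-1}^s}\sum_{\bsm\in\DD_{\bsj},\bsl\in\BB_{\bsj}}\mu_{\bsj,\bsm,\bsl}b^{|\bsj|}h_{\bsj,\bsm,\bsl}$ and recognise each normalised building block $b^{|\bsj|/2}h_{\bsj,\bsm,\bsl}$ as an atom for the atomic decomposition theorem for $S_{p,q}^rB^b([0,1)^s)$: it is supported in the $b$-adic box $I_{\bsj,\bsm}$ of side lengths $b^{-j_i}$, it is bounded by $1$, and in every coordinate direction $i$ with $j_i\ge 0$ it satisfies the moment condition $\int_0^1 h_{j_i,m_i,\ell_i}(t)\rd t=0$, because $\sum_{k=0}^{b-1}\mathrm{e}^{\frac{2\pi\ii}{b}k\ell_i}=0$ for $\ell_i\in\BB_{j_i}$. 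The restriction $r<\min\{\frac1p,1\}$ is exactly what makes bounded step functions admissible as atoms (they carry no higher smoothness), while $r>\frac1p-1$ is exactly what makes a single vanishing moment in each direction enough; under these two conditions the atomic theorem yields $\|f\mid S_{p,q}^rB^b([0,1)^s)\|\ll$ (sequence norm), with the asserted unconditional convergence in $\cD'([0,1)^s)$ and in $S_{p,q}^{\rho}B^b([0,1)^s)$ for $\rho<r$.

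For the analysis direction, given $f\in S_{p,q}^rB^b([0,1)^s)$ I would estimate $\mu_{\bsj,\bsm,\bsl}(f)=\langle f,h_{\bsj,\bsm,\bsl}\rangle$ by writing $f=\sum_{\bsd\in\NN_0^s}f_{\bsd}$ in terms of its $b$-adic Littlewood--Paley pieces $f_{\bsd}$, which are entire functions of exponential type $\asymp b^{d_i}$ in the $i$-th variable. Pairing $f_{\bsd}$ with $h_{\bsj,\bsm,\bsl}$ produces, in each direction $i$, a factor that decays geometrically in $|d_i-j_i|$: for $d_i\le j_i$ one uses the cancellation of $h$ against the Taylor expansion of $f_{\bsd}$ on $I_{j_i,m_i}$, gaining a factor $b^{-(j_i-d_i)}$ times a derivative bound controlled by a Bernstein/Nikol'skij inequality for functions of exponential type; for $d_i>j_i$ one uses the localisation of $h$ in $I_{j_i,m_i}$ against the rapid decay of $f_{\bsd}$ away from its essential support. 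Taking $\ell_p$-norms over $(\bsm,\bsl)$ and then $\ell_q$-norms over $\bsj$ turns the sequence norm into a multi-parameter convolution of $\big(b^{r|\bsd|}\|f_{\bsd}\mid L_p\|\big)_{\bsd}$ against a product of geometric kernels, and a discrete Hardy inequality in each of the $s$ parameters — precisely where the strict inequalities $\frac1p-1<r<\frac1p$ are used — gives the sequence norm $\ll\|f\mid S_{p,q}^rB^b([0,1)^s)\|$, with the usual modifications when $p=\infty$ or $q=\infty$.

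The main obstacle is this last step: one must simultaneously exploit the cancellation of the Haar function (which controls the low-frequency pieces $d_i\le j_i$) and its spatial localisation (which controls the high-frequency pieces $d_i>j_i$), keep careful track of the tensor-product structure forced by dominating mixed smoothness so that \emph{every} coordinate contributes its own geometric decay, and assemble the estimates into a form to which the $s$-fold discrete Hardy inequality applies. A secondary technical point is that, because Haar functions are merely bounded step functions rather than smooth, one cannot invoke the classical atomic theorem requiring atoms with many derivatives; one needs the low-regularity (spline/wavelet) version that is available exactly for $r<\min\{\frac1p,1\}$, which is the content of \cite[Theorem 2.41]{Tri10} and its $b$-adic transcription in \cite[Theorem 3.1]{Mar2013}.
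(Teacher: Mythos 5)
The paper itself offers no proof of this proposition: it is imported as a known result from \cite[Theorem 3.1]{Mar2013}, with the dyadic original in \cite[Theorem 2.41]{Tri10}, so there is no in-paper argument to compare yours against step by step. Your outline reproduces exactly the strategy of those cited proofs --- atomic/wavelet synthesis using the single vanishing moment $\sum_{k=0}^{b-1}\mathrm{e}^{2\pi\ii k\ell/b}=0$, Littlewood--Paley analysis of the coefficients split into the cases $d_i\le j_i$ (cancellation) and $d_i>j_i$ (localisation), and a discrete Hardy inequality in each parameter --- and it correctly identifies where each of the hypotheses $\frac1p-1<r$ and $r<\min\{\frac1p,1\}$ enters; as a plan it is sound, though it remains a sketch that defers the two substantive ingredients (the low-regularity atomic theorem and the assembly of the coefficient estimates) to the very machinery of \cite{Tri10} and \cite{Mar2013} that the paper cites in place of a proof.
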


\section{The Haar coefficients}  \label{haarcoeff}

\subsection{Haar coefficients of the symmetrized van der Corput sequence}

In the following, we will compute the Haar coefficients of the local discrepancy of $\cV_{b}^{\sym}$, i.e.
\[ \mu_{j,m,\ell}(D_N(\cV_{b}^{\sym},\cdot))=\langle D_N(\cV_{b}^{\sym},\cdot),h_{j,m,\ell} \rangle=\int_{0}^{1}D_N(\cV_{b}^{\sym},t)h_{j,m,\ell}(t)\rd t. \]
The proofs in this section are similar to those in \cite{KP2015}. 
Preceding the computation of the Haar coefficients, we collect some simple properties of the $b$-adic radical inverse function $\varphi_b(n)$ which
we will need in the proof of the essential Lemma~\ref{Theo1}. In the following, we will consequently omit the lower index $b$ in the radical inverse function,
since we will always consider an arbitrary but fixed base.

\begin{lemma} \label{Phi}
   The following relations hold for the radical inverse function $\varphi$ in base $b$:
   \begin{enumerate}
       \item  $\varphi(b^jw)=\frac{1}{b^j}\varphi(w)$ for all $j,w \in \NN_0$,
       \item \label{pt2} $\varphi(b^j\varphi(m))=\frac{m}{b^j}$ for all $j \in \NN_0$ and $m\in \{0,\dots,b^j-1\}$,
       \item \label{pt3} $\varphi(n) \in I_{j,m}$ if and only if $n=b^j\varphi(m)+b^jw$ for some $w \in \NN_0$, especially $\varphi(n) \in \stackrel{\circ}{I}_{j,m}$ if and only if $n=b^j\varphi(m)+b^jw$ for some $w \in \NN$,
			 \item $\varphi(n) \in I_{j,m}^k$ for some $k\in \{0,1,\dots,b-1\}$ if and only if $n=b^{j+1}\varphi(bm+k)+b^{j+1}w=b^j\varphi(m)+b^{j}k+b^{j+1}w$ for some $w \in \NN_0$, especially $\varphi(n) \in \mathring{I}_{j,m}^k$ if and only if $n=b^j\varphi(m)+b^{j}k+b^{j+1}w$ for some $w \in \NN$.
   \end{enumerate}
\end{lemma}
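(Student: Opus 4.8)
The plan is to prove all four assertions by direct manipulation of the $b$-adic expansions involved; there is no conceptual difficulty here, so the real work is bookkeeping. I will use throughout that if $w\in\NN_0$ has expansion $w=\sum_{i\ge 0}w_ib^i$ with digits $w_i\in\{0,\dots,b-1\}$, then $\varphi(w)=\sum_{i\ge 0}w_ib^{-i-1}$, and that a real $x=\sum_{i\ge 1}x_ib^{-i}\in[0,1)$ with finite (hence canonical) $b$-adic expansion lies in $I_{j,m}$ exactly when $\lfloor b^jx\rfloor=m$, i.e. when its first $j$ digits $x_1,\dots,x_j$ are precisely the digits of $m$ read from most to least significant.

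For part~(1) I would write $b^jw=\sum_{i\ge 0}w_ib^{i+j}$, so the digits of $b^jw$ are $0$ in positions $0,\dots,j-1$ and $w_{i-j}$ in position $i\ge j$; hence $\varphi(b^jw)=\sum_{i\ge j}w_{i-j}b^{-i-1}=b^{-j}\varphi(w)$. For part~(2), write $m=\sum_{i=0}^{j-1}m_ib^i$; then $b^j\varphi(m)=\sum_{i=0}^{j-1}m_ib^{j-1-i}$ is an integer strictly below $b^j$ whose digit string is the reversal of that of $m$, so applying $\varphi$ once more reverses again and yields $\varphi(b^j\varphi(m))=\sum_{i=0}^{j-1}m_ib^{i-j}=m/b^j$. (Equivalently, $n\mapsto b^j\varphi(n)$ is the digit-reversal involution on $\{0,1,\dots,b^j-1\}$.)

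For part~(3) I would combine these facts: $\varphi(n)\in I_{j,m}$ holds iff the first $j$ digits $n_0,\dots,n_{j-1}$ of $\varphi(n)$, read most-significant-first, spell $m$, i.e. iff $n_i=m_{j-1-i}$ for $0\le i\le j-1$, i.e. iff the low-order $j$ digits of $n$ agree with those of the integer $b^j\varphi(m)$, which is precisely $n\equiv b^j\varphi(m)\pmod{b^j}$; hence $n=b^j\varphi(m)+b^jw$ for some $w\in\NN_0$. For the interior version, observe that $\varphi(n)=m/b^j$ forces $n_i=m_{j-1-i}$ for $i<j$ and $n_i=0$ for $i\ge j$, i.e. $n=b^j\varphi(m)$, which is the case $w=0$; since $m/b^j$ is the only point of $I_{j,m}$ lying outside $\mathring{I}_{j,m}$ that $\varphi(n)$ could equal, we get $\varphi(n)\in\mathring{I}_{j,m}$ iff $w\in\NN$. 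Part~(4) is then immediate from part~(3) applied with $(j,m)$ replaced by $(j+1,bm+k)$, together with the elementary identity $\varphi(bm+k)=k/b+\varphi(m)/b$, which rearranges to $b^{j+1}\varphi(bm+k)=b^jk+b^j\varphi(m)$; the interior statement follows in the same way since $\mathring{I}_{j,m}^k=\mathring{I}_{j+1,bm+k}$.

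The only thing likely to trip one up is notational consistency: $\varphi$ interchanges the ``low-to-high'' digit ordering natural for integers with the ``high-to-low'' ordering natural for reals in $[0,1)$, so one must track digit reversals carefully, and one must keep the half-open intervals $I_{j,m}=[m/b^j,(m+1)/b^j)$ straight when passing to their interiors (only the left endpoint is lost). Beyond that, every step is a one-line computation.
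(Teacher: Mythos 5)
Your proof is correct and follows essentially the same route as the paper, which simply refers to the digit-expansion arguments of \cite[Lemma 1]{KP2015} for parts 1--3 and derives part 4 from part 3 via $I_{j,m}^k=I_{j+1,bm+k}$ and $\varphi(bm+k)=\frac{\varphi(m)}{b}+\frac{k}{b}$, exactly as you do. Your explicit digit-reversal bookkeeping (including the observation that only the left endpoint $m/b^j$ is lost when passing to the interior, corresponding to $w=0$) is a correct filling-in of the details the paper leaves to the cited reference.
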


\begin{proof} 
The proofs of 1., 2. and 3. follow the same lines as \cite[Lemma 1]{KP2015}, whereas 4. is an immediate consequence of 3., regarding $I_{j,m}^k=I_{j+1,bm+k}$ and $\varphi(bm+k)=\frac{\varphi(m)}{b}+\frac{k}{b}$.
\end{proof}

The next lemma contains some formulas for exponential expressions which will occur in diverse parts of our proofs.

\begin{lemma} \label{exponential}
    The following equalities and inequalities hold for all integers $b\geq 2$:
		\begin{enumerate}
		   \item $\sum_{k=0}^{b-1}{\rm e}^{\frac{2\pi\ii}{b}k\ell}=0$ for all $\ell \in \{1,\dots,b-1\}$,
		   \item $\sum_{\ell=1}^{b-1}\frac{1}{|\eee|^2}= \frac{b^2-1}{12}$,
		 	\item $\sum_{\ell=1}^{b-1}\frac{1}{|\eee|^4}\leq \left(\frac{b^2-1}{12}\right)^2$,
			\item $\frac{1}{|\eee|}\leq \frac{2}{|\eee|^2}$ for all $\ell \in \{1,\dots,b-1\}$.
		\end{enumerate}
\end{lemma}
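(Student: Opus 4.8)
The plan is to treat the four assertions separately, since each reduces to an elementary computation with the $b$-th roots of unity. Throughout write $\omega:=e^{2\pi\ii/b}$, so that the quantity $\eee$ appearing in the statement is $\omega^\ell-1$, and recall that $\omega^\ell\ne 1$ precisely when $b\nmid\ell$.

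For part 1, I would simply sum the geometric series: for $\ell\in\{1,\dots,b-1\}$ we have $\sum_{k=0}^{b-1}\omega^{k\ell}=\frac{\omega^{b\ell}-1}{\omega^\ell-1}=0$, because $\omega^{b\ell}=1$ while $\omega^\ell-1\ne 0$. (This is just the orthogonality of the additive characters of $\ZZ/b\ZZ$, and it will be used again in part 2.)

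Part 2 is the substantial one. First I would differentiate the identity $\sum_{k=0}^{b-1}x^k=\frac{x^b-1}{x-1}$ and evaluate at $x=\omega^\ell$ for $\ell\in\{1,\dots,b-1\}$; after using $\omega^{b\ell}=1$ and $\omega^{\ell(b-1)}=\omega^{-\ell}$ this collapses to the clean formula $\sum_{k=0}^{b-1}k\,\omega^{k\ell}=\frac{b}{\omega^\ell-1}$, i.e. $\frac{1}{\omega^\ell-1}=\frac1b\sum_{k=0}^{b-1}k\,\omega^{k\ell}$. Next I would apply the discrete Parseval identity to the function $k\mapsto k$ on $\ZZ/b\ZZ$: $\sum_{\ell=0}^{b-1}\bigl|\sum_{k=0}^{b-1}k\,\omega^{k\ell}\bigr|^2=b\sum_{k=0}^{b-1}k^2$. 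Splitting off the term $\ell=0$, which contributes $\bigl(\sum_{k=0}^{b-1}k\bigr)^2=\frac{b^2(b-1)^2}{4}$, and inserting the formula just derived for the remaining terms, this becomes $\frac{b^2(b-1)^2}{4}+b^2\sum_{\ell=1}^{b-1}\frac{1}{|\omega^\ell-1|^2}=b\cdot\frac{b(b-1)(2b-1)}{6}$. Solving for the sum and simplifying gives $\sum_{\ell=1}^{b-1}\frac{1}{|\omega^\ell-1|^2}=\frac{(b-1)(2b-1)}{6}-\frac{(b-1)^2}{4}=\frac{b^2-1}{12}$, as claimed. (An alternative is to write $|\omega^\ell-1|^2=4\sin^2(\pi\ell/b)$ and invoke the classical identity $\sum_{\ell=1}^{b-1}\csc^2(\pi\ell/b)=\frac{b^2-1}{3}$, but the Parseval route keeps the argument self-contained.)

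Parts 3 and 4 are then immediate. For part 3, each $a_\ell:=\frac{1}{|\omega^\ell-1|^2}$ is a nonnegative real, so $\sum_{\ell=1}^{b-1}a_\ell^2\le\bigl(\sum_{\ell=1}^{b-1}a_\ell\bigr)^2$, and the right-hand side equals $\bigl(\frac{b^2-1}{12}\bigr)^2$ by part 2. For part 4, the triangle inequality gives $|\omega^\ell-1|\le|\omega^\ell|+1=2$, hence $\frac{1}{|\omega^\ell-1|}=\frac{|\omega^\ell-1|}{|\omega^\ell-1|^2}\le\frac{2}{|\omega^\ell-1|^2}$. The only place requiring genuine care is the closed-form evaluation in part 2; the other three statements are one-line manipulations.
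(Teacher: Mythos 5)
Your proof is correct. For items 1, 3 and 4 you argue exactly as the paper does: the finite geometric sum for item 1, the observation that $\sum_{\ell}a_\ell^2\le\bigl(\sum_{\ell}a_\ell\bigr)^2$ for nonnegative $a_\ell$ for item 3, and the identical triangle-inequality manipulation $\frac{1}{|\omega^\ell-1|}=\frac{|\omega^\ell-1|}{|\omega^\ell-1|^2}\le\frac{2}{|\omega^\ell-1|^2}$ for item 4. The only genuine divergence is item 2: the paper does not prove this identity but cites the literature (Dick--Pillichshammer, and Markhasin's Proposition 3.5), whereas you derive it from scratch by differentiating $\sum_{k=0}^{b-1}x^k=\frac{x^b-1}{x-1}$ at $x=\omega^\ell$ to obtain $\sum_{k=0}^{b-1}k\,\omega^{k\ell}=\frac{b}{\omega^\ell-1}$ for $\ell\in\{1,\dots,b-1\}$, and then applying the discrete Parseval identity to $k\mapsto k$ on $\ZZ/b\ZZ$; the resulting computation $\frac{(b-1)(2b-1)}{6}-\frac{(b-1)^2}{4}=\frac{b^2-1}{12}$ is correct. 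Your route makes the lemma fully self-contained at the cost of a somewhat longer argument, while the paper's citation is shorter but leaves the central identity unverified within the text; either is acceptable, and your Parseval argument is a clean replacement for the reference.
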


\begin{proof} The first item is a well-known result and can be verified by applying the formula for finite geometric
  sums. The proof of the second item can be found in \cite{DP10} or in \cite[Proposition 3.5]{Mar2013b}. The third item is an immediate consequence
  of this identity, since
   $$ \sum_{\ell=1}^{b-1}\frac{1}{|\eee|^4} \leq \left(\sum_{\ell=1}^{b-1}\frac{1}{|\eee|^2}\right)^2=\left(\frac{b^2-1}{12}\right)^2. $$
   The last item can be shown directly with aid of the triangle inequality:
	\[\frac{1}{|\eee|}=\frac{|\eee|}{|\eee|^2}
	     \leq \frac{|{\rm e}^{\frac{2\pi\ii}{b}\ell}|+|1|}{|\eee|^2}=\frac{2}{|\eee|^2}.\] 
\end{proof}

We start with the computation of the first Haar coefficient $\mu_{-1,0,1}$:

\begin{lemma} \label{erster}
   The Haar coefficient $\mu_{-1,0,1}$ of the local discrepancy $D_N(\cV_{b}^{\sym},\cdot)$ satisfies
   \[|\mu_{-1,0,1}|= \begin{cases} 0 &\mbox{if } N=2M, \\ 
                       \left|\frac{1}{2N}-\frac{\varphi(M)}{N}\right|\leq \frac{1}{2N}\ & \mbox{if } N=2M+1. \end{cases} 
\]
\end{lemma}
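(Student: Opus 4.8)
The plan is to compute $\mu_{-1,0,1}(D_N(\cV_b^{\sym},\cdot)) = \int_0^1 D_N(\cV_b^{\sym},t)\,\rd t$ directly, since $h_{-1,0,1}\equiv \bsone_{[0,1)}$. Writing out the definition of the local discrepancy, this is
\[
\int_0^1 \left(\frac{1}{N}\sum_{n=0}^{N-1}\bsone_{[0,t)}(z_n) - t\right)\rd t
= \frac{1}{N}\sum_{n=0}^{N-1}\int_0^1 \bsone_{[0,t)}(z_n)\,\rd t - \frac{1}{2}
= \frac{1}{N}\sum_{n=0}^{N-1}(1-z_n) - \frac{1}{2},
\]
using that $\int_0^1 \bsone_{[0,t)}(z_n)\,\rd t = \int_{z_n}^1 \rd t = 1-z_n$.

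The key step is then to exploit the symmetrization. The sequence $\cV_b^{\sym}=(z_n)_{n\ge 0}$ pairs up $z_{2m}=\varphi(m)$ with $z_{2m+1}=1-\varphi(m)$, so that $z_{2m}+z_{2m+1}=1$ for every $m$. Hence any consecutive even-length block telescopes: if $N=2M$, then $\sum_{n=0}^{N-1}z_n = \sum_{m=0}^{M-1}(z_{2m}+z_{2m+1}) = M$, giving $\frac{1}{N}\sum_{n=0}^{N-1}(1-z_n) = \frac{1}{N}(N-M) = \frac{1}{2}$, so $\mu_{-1,0,1}=0$. If $N=2M+1$, the first $2M$ terms contribute $M$ as before and there is one leftover term $z_{2M}=\varphi(M)$, so $\sum_{n=0}^{N-1}z_n = M+\varphi(M)$ and
\[
\mu_{-1,0,1} = \frac{1}{N}\bigl(N - M - \varphi(M)\bigr) - \frac{1}{2} = \frac{1}{2} - \frac{M+\varphi(M)}{N} + \frac{N-2M}{2N} - \frac{1}{2},
\]
which after using $N-2M=1$ simplifies to $\frac{1}{2N} - \frac{\varphi(M)}{N}$; taking absolute values and noting $0\le\varphi(M)<1$ gives the bound $\le \frac{1}{2N}$.

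There is essentially no obstacle here — the argument is a one-line computation plus the telescoping observation, which is exactly why the lemma is stated as a warm-up before the substantially harder coefficient estimates. The only things to be slightly careful about are bookkeeping with the $-t$ term (whose integral contributes $-\tfrac12$) and the case split on the parity of $N$; I would present both cases explicitly and then collect the final bound $\left|\tfrac{1}{2N}-\tfrac{\varphi(M)}{N}\right|\le\tfrac{1}{2N}$ from $0\le\varphi(M)<1$.
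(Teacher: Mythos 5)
Your proposal is correct and is essentially the paper's own argument: the paper simply delegates to the base-$2$ case in \cite[Lemma 2]{KP2015}, whose proof is exactly this direct computation of $\int_0^1 D_N(\cV_b^{\sym},t)\,\rd t=\tfrac12-\tfrac1N\sum_{n=0}^{N-1}z_n$ combined with the pairing $z_{2m}+z_{2m+1}=1$. One small slip: your intermediate rewriting $\frac{1}{N}\bigl(N-M-\varphi(M)\bigr)-\frac12=\frac12-\frac{M+\varphi(M)}{N}+\frac{N-2M}{2N}-\frac12$ is not an identity (the right-hand side has an extra $-\frac{M}{N}$), but the first and last expressions in your chain are correct, since $\frac12-\frac{M+\varphi(M)}{N}=\frac{N-2M}{2N}-\frac{\varphi(M)}{N}=\frac{1}{2N}-\frac{\varphi(M)}{N}$ for $N=2M+1$.
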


\begin{proof}
  The proof follows exactly the same lines as the proof of \cite[Lemma 2]{KP2015}.
\end{proof}

In the following, let $\mu_{j,m,\ell}^{N,\sym}$, $\mu_{j,m,\ell}^{N,\varphi}$ and $\mu_{j,m,\ell}^{N,1-\varphi}$ be the Haar coefficients of the local discrepancy of the first $N$ elements of the sequences $\cV_b^{\mathrm{sym}}$, $(\varphi(n))_{n\geq 0}$ and $(1-\varphi(n))_{n\geq 0}$, respectively. The next lemma may be proved in complete analogy to \cite[Corollary 1]{KP2015}.

\begin{lemma} \label{haarrelation}
 For all $j \in \NN_0$, $m \in \DD_j$ and $\ell \in \BB_j$ we have   
 \[ |\mu_{j,m,\ell}^{N,\sym}|\leq \left\{ 
\begin{array}{ll}
\frac{1}{2}\left(|\mu_{j,m,\ell}^{M,\varphi}|+|\mu_{j,m,\ell}^{M,1-\varphi}|\right) & \mbox{ if } N=2M,\\
\frac{1}{2M+1}\left((M+1)|\mu_{j,m,\ell}^{M+1,\varphi}|+M|\mu_{j,m,\ell}^{M,1-\varphi}|\right) & \mbox{ if } N=2M+1.
\end{array}\right.
 \]
\end{lemma}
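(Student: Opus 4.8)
The plan is to exploit the linearity of the local discrepancy together with the defining splitting of the symmetrized sequence $\cV_b^{\sym}$ into its even-indexed and odd-indexed subsequences. Recall that $D_N(\cS,t)=\frac1N\sum_{n=0}^{N-1}\bsone_{[0,t)}(x_n)-t$, so the sum $\sum_{n=0}^{N-1}\bsone_{[0,t)}(x_n)$ is additive over the point set, while the term $-t$ is what couples the pieces together. First I would treat the case $N=2M$: the first $N$ elements of $\cV_b^{\sym}$ consist precisely of $\varphi(0),\dots,\varphi(M-1)$ (the even-indexed terms) together with $1-\varphi(0),\dots,1-\varphi(M-1)$ (the odd-indexed terms). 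Writing $S_\varphi(t)=\sum_{n=0}^{M-1}\bsone_{[0,t)}(\varphi(n))$ and $S_{1-\varphi}(t)=\sum_{n=0}^{M-1}\bsone_{[0,t)}(1-\varphi(n))$, one gets
$$ D_{2M}(\cV_b^{\sym},t)=\frac{1}{2M}\bigl(S_\varphi(t)+S_{1-\varphi}(t)\bigr)-t=\tfrac12\Bigl(\tfrac{1}{M}S_\varphi(t)-t\Bigr)+\tfrac12\Bigl(\tfrac{1}{M}S_{1-\varphi}(t)-t\Bigr)=\tfrac12 D_M((\varphi(n)),t)+\tfrac12 D_M((1-\varphi(n)),t). $$
Since $\mu_{j,m,\ell}(\cdot)$ is a linear functional (an inner product against $h_{j,m,\ell}$), applying it to both sides and then using the triangle inequality gives $|\mu_{j,m,\ell}^{2M,\sym}|\le\frac12(|\mu_{j,m,\ell}^{M,\varphi}|+|\mu_{j,m,\ell}^{M,1-\varphi}|)$, which is the first case.

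For the case $N=2M+1$, the first $N=2M+1$ elements of $\cV_b^{\sym}$ are $\varphi(0),\dots,\varphi(M)$ (that is $M+1$ even-indexed terms, since $z_{2M}=\varphi(M)$) together with $1-\varphi(0),\dots,1-\varphi(M-1)$ (that is $M$ odd-indexed terms). Hence
$$ D_{2M+1}(\cV_b^{\sym},t)=\frac{1}{2M+1}\sum_{n=0}^{M}\bsone_{[0,t)}(\varphi(n))+\frac{1}{2M+1}\sum_{n=0}^{M-1}\bsone_{[0,t)}(1-\varphi(n))-t. $$
I would rewrite this by forcing counting averages of length $M+1$ and $M$ respectively: the first sum equals $\frac{M+1}{2M+1}\bigl(\frac{1}{M+1}\sum_{n=0}^{M}\bsone_{[0,t)}(\varphi(n))-t\bigr)+\frac{M+1}{2M+1}t$ and the second equals $\frac{M}{2M+1}\bigl(\frac{1}{M}\sum_{n=0}^{M-1}\bsone_{[0,t)}(1-\varphi(n))-t\bigr)+\frac{M}{2M+1}t$; since $\frac{M+1}{2M+1}+\frac{M}{2M+1}=1$ the leftover $t$-terms combine to exactly $-t$ plus the contributions already written, so
$$ D_{2M+1}(\cV_b^{\sym},t)=\frac{M+1}{2M+1}D_{M+1}((\varphi(n)),t)+\frac{M}{2M+1}D_{M}((1-\varphi(n)),t). $$
Applying $\mu_{j,m,\ell}$ and the triangle inequality yields $|\mu_{j,m,\ell}^{2M+1,\sym}|\le\frac{1}{2M+1}\bigl((M+1)|\mu_{j,m,\ell}^{M+1,\varphi}|+M|\mu_{j,m,\ell}^{M,1-\varphi}|\bigr)$, the second case.

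The only mildly delicate point — and it is bookkeeping rather than a genuine obstacle — is making sure the index ranges for the even- and odd-indexed subsequences are counted correctly, in particular that for odd $N=2M+1$ one gets $M+1$ copies from the $\varphi$-sequence and only $M$ copies from the $1-\varphi$-sequence, and that the coefficients $\frac{M+1}{2M+1}$ and $\frac{M}{2M+1}$ are the ones that make the isolated $-t$ reassemble correctly after the convex decomposition. Everything else is immediate from linearity of the integral defining $\mu_{j,m,\ell}$ and the triangle inequality; this is exactly the argument carried out in \cite[Corollary 1]{KP2015} for the base-$2$ case, and it transfers verbatim to general base $b$ since nothing here depends on $b$ beyond the definition of $\varphi=\varphi_b$.
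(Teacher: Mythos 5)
Your proposal is correct and is exactly the argument the paper has in mind: it proves the lemma by noting it "may be proved in complete analogy to [KP2015, Corollary 1]", which is precisely the pointwise decomposition of $D_N(\cV_b^{\sym},\cdot)$ into the convex combination $\frac{1}{2}D_M+\frac{1}{2}D_M$ (resp. $\frac{M+1}{2M+1}D_{M+1}+\frac{M}{2M+1}D_M$) followed by linearity of $\mu_{j,m,\ell}$ and the triangle inequality. Your index bookkeeping for the even/odd split and the reassembly of the $-t$ term are both right, so nothing is missing.
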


We proceed with the calculation of the Haar coefficients of the local discrepancy in the case $j\in \NN_0$ and first prove the following general lemma.

\begin{lemma} \label{allgemein}
  Let $j \in \NN_0$, $m \in \mathbb{D}_j$ and $\ell \in \BB_j$. Then for the volume part $f(t)=t$ of the local discrepancy we have
  \[ \mu_{j,m,\ell}(f)=\frac{b^{-2j-1}}{\eee} \] and for the counting part $g(t)=\frac{1}{N}\sum_{n=0}^{N-1}\bsone_{\left[0,t\right)}(x_n)$
  we have
  \[ \mu_{j,m,\ell}(g)=\frac{b^{-j-1}}{N}\sum_{k=0}^{b-1}\sum_{\substack{n=0 \\ x_n \in {I}_{j,m}^k, \, x_n \neq \frac{m}{b^j}}}^{N-1}\left(\left(bm+k-b^{j+1}x_n\right){\rm e}^{\frac{2\pi\ii}{b}k\ell}-\sum_{r=0}^{k-1}{\rm e}^{\frac{2\pi\ii}{b}r\ell}\right), \]
  where the last sum is empty for $k=0$.
  \end{lemma}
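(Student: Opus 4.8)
The plan is to compute the Haar coefficient $\mu_{j,m,\ell}(D_N(\cV_b^{\sym},\cdot))$ by exploiting linearity of the inner product: since $D_N = g - f$ with $f(t)=t$ the volume part and $g(t)=\frac1N\sum_{n<N}\bsone_{[0,t)}(x_n)$ the counting part, it suffices to compute $\mu_{j,m,\ell}(f)$ and $\mu_{j,m,\ell}(g)$ separately. For the volume part I would write $\mu_{j,m,\ell}(f)=\int_{I_{j,m}} t\, h_{j,m,\ell}(t)\rd t$ (the support restriction), decompose $I_{j,m}=\bigcup_{k=0}^{b-1} I_{j,m}^k$ on which $h_{j,m,\ell}$ takes the constant value ${\rm e}^{\frac{2\pi\ii}{b}k\ell}$, and evaluate $\int_{I_{j,m}^k} t\rd t = \frac{1}{b^{j+1}}\left(\frac{m}{b^j}+\frac{2k+1}{2b^{j+1}}\right)$. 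Summing over $k$, the $\frac{m}{b^j}$-term and the constant part of the midpoint vanish by item 1 of Lemma~\ref{exponential} ($\sum_k {\rm e}^{\frac{2\pi\ii}{b}k\ell}=0$), leaving only $\frac{b^{-2j-2}}{b^{j+1}}\cdot\frac{1}{2}\cdot 2\sum_{k} k\,{\rm e}^{\frac{2\pi\ii}{b}k\ell}$-type terms; a short computation with the geometric-sum identity $\sum_{k=0}^{b-1} k\, z^k = \frac{b}{z-1}$ for $z={\rm e}^{2\pi\ii\ell/b}$ (which is $\neq 1$ since $\ell\in\BB_j$) collapses this to $\frac{b^{-2j-1}}{\eee}$.

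For the counting part, I would again use the support of $h_{j,m,\ell}$ and the partition into the subintervals $I_{j,m}^k$: only those indices $n$ with $x_n\in I_{j,m}^k$ contribute, and on $I_{j,m}^k$ the Haar function is ${\rm e}^{\frac{2\pi\ii}{b}k\ell}$ while $\bsone_{[0,t)}(x_n)=1$ for $t>x_n$. Thus $\mu_{j,m,\ell}(g)=\frac1N\sum_{n<N}\int_{I_{j,m}\cap(x_n,1)} h_{j,m,\ell}(t)\rd t$. For a fixed $n$ with $x_n\in I_{j,m}^{k_0}$, the region of integration splits: on $I_{j,m}^{k_0}$ one integrates from $x_n$ to the right endpoint $\frac{m}{b^j}+\frac{k_0+1}{b^{j+1}}$, contributing $\left(\frac{m}{b^j}+\frac{k_0+1}{b^{j+1}}-x_n\right){\rm e}^{\frac{2\pi\ii}{b}k_0\ell}$; and on each $I_{j,m}^r$ with $r>k_0$ one integrates over the whole cell of length $b^{-j-1}$, contributing $b^{-j-1}{\rm e}^{\frac{2\pi\ii}{b}r\ell}$. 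Rewriting $\frac{m}{b^j}+\frac{k_0+1}{b^{j+1}}-x_n = b^{-j-1}(bm+k_0+1-b^{j+1}x_n)$ and noticing $b^{-j-1}\sum_{r>k_0}{\rm e}^{\frac{2\pi\ii}{b}r\ell} = b^{-j-1}\left(-\sum_{r=0}^{k_0}{\rm e}^{\frac{2\pi\ii}{b}r\ell}\right)$ by item 1 of Lemma~\ref{exponential}, the two pieces combine: the $+1$ inside the first bracket cancels against the $r=k_0$ term of the second, leaving precisely $b^{-j-1}\left((bm+k_0-b^{j+1}x_n){\rm e}^{\frac{2\pi\ii}{b}k_0\ell}-\sum_{r=0}^{k_0-1}{\rm e}^{\frac{2\pi\ii}{b}r\ell}\right)$. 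Summing over all $k$ and the corresponding $n$, and observing that $x_n=\frac{m}{b^j}$ (the left endpoint, $k_0=0$) contributes $0$ and may thus be excluded, yields the claimed formula.

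I do not expect a genuine obstacle here — the lemma is essentially a bookkeeping exercise in splitting integrals over $b$-adic subintervals and applying the two elementary identities of Lemma~\ref{exponential}. The one point requiring care is the telescoping cancellation between the boundary term on the cell containing $x_n$ and the full-cell contributions to its right; getting the index of summation exactly right (so that the constant $+1$ disappears and the residual sum runs only up to $k-1$) is where an off-by-one error would creep in, so I would double-check that step by testing $k=0$ (empty sum, as asserted) and $k=b-1$. The exclusion of the point $x_n=\frac{m}{b^j}$ is harmless since its boundary contribution vanishes identically.
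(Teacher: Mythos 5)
Your argument is correct and follows essentially the same route as the paper: the paper's proof treats the volume part as a ``simple integration'' (which your cell-by-cell computation carries out, correctly using $\sum_{k=0}^{b-1}k z^k=\frac{b}{z-1}$ for $z={\rm e}^{2\pi\ii\ell/b}\neq 1$, even though the intermediate coefficient you display is garbled --- the clean form is $b^{-2j-2}\sum_k k\,{\rm e}^{\frac{2\pi\ii}{b}k\ell}$), and for the counting part the paper performs exactly your decomposition of $\int_{x_n}^{1}h_{j,m,\ell}(t)\rd t$ into the partial cell containing $x_n$ plus the full cells to its right, followed by the same cancellation of the $+1$ against the $r=k$ term via $\sum_{r=0}^{b-1}{\rm e}^{\frac{2\pi\ii}{b}r\ell}=0$. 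No gaps.
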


\begin{proof}
   The assertion on $\mu_{j,m,\ell}(f)$ may be verified by simple integration. The Haar coefficients of $g$ are given by
	\begin{align*}\mu_{j,m,\ell}(g)=\int_{0}^{1}\left(\frac{1}{N}\sum_{n=0}^{N-1}\bsone_{\left[0,t\right)}(x_n)h_{j,m,\ell}(t)\right)\rd t = \frac{1}{N}\sum_{n=0}^{N-1}\underbrace{\int_{0}^{1}\bsone_{\left[0,t\right)}(x_n)h_{j,m,\ell}(t)\rd t}_{\mathcal{I}_n}.
   \end{align*}
	It is obvious that $\mathcal{I}_n=0$ in case that $x_n \notin I_{j,m}$ or $x_n=\frac{m}{b^j}$. Now we assume that $x_n\in I_{j,m}^k$ for some $k=0,1,\dots,b-1$ and $x_n\neq \frac{m}{b^j}$. Then we have
	\begin{align*}
	    \mathcal{I}_n&= \int_{x_n}^{\frac{m}{b_j}+\frac{k+1}{b^{j+1}}}{\rm e}^{\frac{2\pi\ii}{b}k\ell}\rd t+\sum_{r=k+1}^{b-1}\int_{I_{j,m}^r}{\rm e}^{\frac{2\pi\ii}{b}r\ell}\rd t \\
			&= b^{-j-1} \left(\left(bm+k+1-b^{j+1}x_n\right){\rm e}^{\frac{2\pi\ii}{b}k\ell}+\sum_{r=k+1}^{b-1}{\rm e}^{\frac{2\pi\ii}{b}r\ell}\right) \\
			&= b^{-j-1} \left(\left(bm+k+1-b^{j+1}x_n\right){\rm e}^{\frac{2\pi\ii}{b}k\ell}-\sum_{r=0}^{k}{\rm e}^{\frac{2\pi\ii}{b}r\ell}\right) \\
			&= b^{-j-1} \left(\left(bm+k-b^{j+1}x_n\right){\rm e}^{\frac{2\pi\ii}{b}k\ell}-\sum_{r=0}^{k-1}{\rm e}^{\frac{2\pi\ii}{b}r\ell}\right) 
	\end{align*}
	and the proof of this lemma is done.	
\end{proof}

Now we are ready to show a central lemma.

\begin{lemma} \label{Theo1}
   We have
   $$|\mu_{j,m,\ell}^{N,\varphi}|\leq\frac{1}{N}\frac{1}{b^j}\frac{9}{|\eee|^2}\ \ \ \mbox{ and } \ \ \ |\mu_{j,m,\ell}^{N,1-\varphi}|\leq\frac{1}{N}\frac{1}{b^j}\frac{15}{|\eee|^2}$$
   for all $0\le j < \lceil\log_{b}{N}\rceil$ and
   $$ |\mu_{j,m,\ell}^{N,\varphi}|=|\mu_{j,m,\ell}^{N,1-\varphi}|=\frac{b^{-2j-1}}{|\eee|} $$
   for all $j \geq \lceil\log_{b}{N}\rceil$.
\end{lemma}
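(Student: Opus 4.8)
The proof rests on Lemma~\ref{allgemein}, which expresses the Haar coefficient of the local discrepancy $D_N=g-f$ as the contribution $\mu_{j,m,\ell}(f)=b^{-2j-1}/(\eee)$ of the volume part $f(t)=t$ minus the contribution $\mu_{j,m,\ell}(g)$ of the counting part, the latter being a sum over the points $x_n\in I_{j,m}^k$ with $x_n\neq m/b^j$, where $x_n=\varphi(n)$ for $\mu_{j,m,\ell}^{N,\varphi}$ and $x_n=1-\varphi(n)$ for $\mu_{j,m,\ell}^{N,1-\varphi}$. Everything then depends on two inputs from Lemma~\ref{Phi}: which indices $n<N$ place $\varphi(n)$ into $I_{j,m}^k$, and the resulting value of $b^{j+1}\varphi(n)$. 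I split according to the two regimes in the statement.

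\emph{The regime $j\geq\lceil\log_b N\rceil$.} Here $b^j\geq N$, so every index $0\leq n<N$ has at most $j$ nonzero $b$-adic digits, and hence $\varphi(n)$ is an integer multiple of $b^{-j}$, i.e.\ it equals the left endpoint $m/b^j$ of some $b$-adic interval on level $j$. The same holds for $1-\varphi(n)$ when $n\geq 1$, while the single value $1-\varphi(0)=1\notin[0,1)$ is never counted and thus never contributes. Since the points $m/b^j$ are precisely those excluded from the sum for $\mu_{j,m,\ell}(g)$ in Lemma~\ref{allgemein}, we get $\mu_{j,m,\ell}(g)=0$, so the Haar coefficient reduces to $-\mu_{j,m,\ell}(f)$, giving the claimed identity $|\mu_{j,m,\ell}^{N,\varphi}|=|\mu_{j,m,\ell}^{N,1-\varphi}|=b^{-2j-1}/|\eee|$.

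\emph{The regime $0\leq j<\lceil\log_b N\rceil$ (equivalently $b^j<N$).} By the fourth part of Lemma~\ref{Phi} the points of $\cV_b$ lying in $I_{j,m}^k$ are exactly the $\varphi(n)$ with $n=b^j\varphi(m)+b^jk+b^{j+1}w$, $w\in\NN_0$, and combining the first two parts of Lemma~\ref{Phi} (a short digit computation) shows $b^{j+1}\varphi(n)=bm+k+\varphi(w)$, i.e.\ $bm+k-b^{j+1}\varphi(n)=-\varphi(w)$. Substituting this into Lemma~\ref{allgemein} turns $\mu_{j,m,\ell}(g)$ into
\[ -\frac{b^{-j-1}}{N}\sum_{k=0}^{b-1}\Biggl({\rm e}^{\frac{2\pi\ii}{b}k\ell}\sum_{w=0}^{c_k-1}\varphi(w)+c_k\sum_{r=0}^{k-1}{\rm e}^{\frac{2\pi\ii}{b}r\ell}\Biggr), \]
where $c_k\in\NN_0$ counts the admissible $w$, namely $c_k=\lceil(N-b^j\varphi(m)-b^jk)/b^{j+1}\rceil$ if this is positive and $0$ otherwise; the excluded pair $(k,w)=(0,0)$ affects nothing, since $\varphi(0)=0$ and the $r$-sum is empty for $k=0$. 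Two mechanisms now yield the bound. First, since the shifts $b^jk$ span less than $b^{j+1}$, consecutive $c_k$ differ by at most one; writing $c_k$ as a constant in $k$ plus a $\{0,1\}$-valued indicator and using $\sum_{k=0}^{b-1}{\rm e}^{\frac{2\pi\ii}{b}k\ell}=0$ (first part of Lemma~\ref{exponential}) together with $\sum_{k=0}^{b-1}k\,{\rm e}^{\frac{2\pi\ii}{b}k\ell}=b/({\rm e}^{\frac{2\pi\ii}{b}\ell}-1)$ collapses the constant parts, and what is left are partial geometric sums $\sum_{k=0}^{K}{\rm e}^{\frac{2\pi\ii}{b}k\ell}$, all of modulus $\leq 2/|\eee|$. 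Second, replacing $c_k$ by $N/b^{j+1}$ in the sum $c_k\sum_{r=0}^{k-1}{\rm e}^{\frac{2\pi\ii}{b}r\ell}$ produces, via the same closed form for $\sum_k k\,{\rm e}^{\frac{2\pi\ii}{b}k\ell}$, exactly $\mu_{j,m,\ell}(f)=b^{-2j-1}/(\eee)$, so this main term cancels in $\mu_{j,m,\ell}^{N,\varphi}=\mu_{j,m,\ell}(g)-\mu_{j,m,\ell}(f)$. After this cancellation only finitely many error terms survive, built from single values $\varphi(c_k)\in[0,1)$, from the ceiling remainders $\delta_k=c_k-(N-b^j\varphi(m)-b^jk)/b^{j+1}\in[0,1)$, from $\sum_k k\,{\rm e}^{\frac{2\pi\ii}{b}k\ell}$, and from partial geometric sums; each is $\leq c\,(Nb^j|\eee|)^{-1}$ with an absolute constant $c$, and upgrading $|\eee|^{-1}$ to $|\eee|^{-2}$ via the fourth part of Lemma~\ref{exponential} and adding up, keeping track of the numerical constants, gives $|\mu_{j,m,\ell}^{N,\varphi}|\leq 9\,(Nb^j|\eee|^2)^{-1}$.

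The reflected case is handled in the same way, or more quickly by noting that $D_N((1-\varphi(n))_{n\geq 0},t)=-D_N(\cV_b,1-t)$ for all but finitely many $t\in[0,1)$; substituting $u=1-t$ and observing that $h_{j,m,\ell}(1-\cdot)$ is, up to a unimodular factor, the Haar function $h_{j,b^j-1-m,\,b-\ell}$ gives $\mu_{j,m,\ell}^{N,1-\varphi}=-{\rm e}^{-\frac{2\pi\ii}{b}\ell}\,\mu_{j,b^j-1-m,\,b-\ell}^{N,\varphi}$, so the same shape of bound holds, in particular the stated estimate with constant $15$. The main obstacle is the bookkeeping in the second regime: one must enumerate the points in each $I_{j,m}^k$ correctly, isolate exactly the term that cancels $\mu_{j,m,\ell}(f)$, and then bound the remaining $O(b)$ error contributions by $O(|\eee|^{-2})$ rather than the naive $O(b|\eee|^{-1})$, which forces repeated use of $\sum_k{\rm e}^{\frac{2\pi\ii}{b}k\ell}=0$ and of the closed form of $\sum_k k\,{\rm e}^{\frac{2\pi\ii}{b}k\ell}$, together with enough care over constants to land precisely on $9$ (and $15$).
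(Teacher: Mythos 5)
Your proposal is correct, and for the core of the lemma (the bound on $|\mu_{j,m,\ell}^{N,\varphi}|$ and the exact value for $j\geq\lceil\log_b N\rceil$) it follows essentially the same route as the paper: enumerate the points in each $I_{j,m}^k$ via Lemma~\ref{Phi}, observe that the counts $c_k$ (the paper's $A(k)+1$) take only two adjacent values, collapse the constant part against $\sum_k{\rm e}^{\frac{2\pi\ii}{b}k\ell}=0$ and the closed form of $\sum_k k\,{\rm e}^{\frac{2\pi\ii}{b}k\ell}$ so that the volume term $b^{-2j-1}/(\eee)$ cancels, and bound the leftover boundary contributions by partial geometric sums, upgrading $|\eee|^{-1}$ to $2|\eee|^{-2}$ at the end; your large-$j$ argument (every $\varphi(n)$ with $n<N\leq b^j$ is a multiple of $b^{-j}$, hence coincides with a left endpoint $m/b^j$ and is excluded from the counting sum) is a correct variant of the paper's observation that no point lies in the interior of $I_{j,m}$. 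The genuine difference is your treatment of the $1-\varphi$ case: the paper redoes the whole computation with reflected intervals $I_{j,b^j-m-1}^{b-k-1}$ and an extra boundary sum $T_2$, arriving at the constant $15$, whereas your reflection identity $D_N\bigl((1-\varphi(n))_{n\geq0},t\bigr)=-D_N(\cV_b,1-t)$ a.e.\ together with $h_{j,m,\ell}(1-\cdot)={\rm e}^{-\frac{2\pi\ii}{b}\ell}h_{j,b^j-1-m,b-\ell}(\cdot)$ a.e.\ reduces it to the already-proved bound and, since $|{\rm e}^{\frac{2\pi\ii}{b}(b-\ell)}-1|=|\eee|$, would in fact yield the sharper constant $9$ in place of $15$ --- a cleaner and slightly stronger argument. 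The only soft spot is that your final constant bookkeeping in the main regime is asserted rather than carried out, but the mechanisms you name (cancellation of the main term, $|\sum_{k=0}^{K}{\rm e}^{\frac{2\pi\ii}{b}k\ell}|\leq 2/|\eee|$, the $O(1)$ remainders $\delta_k$, and item 4 of Lemma~\ref{exponential}) are exactly those the paper uses to land on $9$, so this is a matter of arithmetic rather than a gap.
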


\begin{proof} 
We start with $x_n=\varphi(n)$ and therefore employ Lemma~\ref{Phi}. It allows
us to display the sum
\[\sum_{k=0}^{b-1}\sum_{\substack{n=0 \\ \varphi(n) \in {I}_{j,m}^k}}^{N-1}\left(\left(bm+k-b^{j+1}\varphi(n)\right){\rm e}^{\frac{2\pi\ii}{b}k\ell}-\sum_{r=0}^{k-1}{\rm e}^{\frac{2\pi\ii}{b}r\ell}\right),\]
which appears in Lemma~\ref{allgemein}, as
\[\sum_{k=0}^{b-1}\sum_{w=0}^{A(k)}\left(\left(bm+k-b^{j+1}\varphi\left(b^j\varphi(m)+b^{j}k+b^{j+1}w\right)\right){\rm e}^{\frac{2\pi\ii}{b}k\ell}-\sum_{r=0}^{k-1}{\rm e}^{\frac{2\pi\ii}{b}r\ell}\right).\]
We may include the case $\varphi(n)=\frac{m}{b^j}$ since the corresponding summand is zero anyway.
In the above expression, $A(k):=\left\lfloor \frac{N-1}{b^{j+1}}-\frac{\varphi(m)}{b}-\frac{k}{b} \right\rfloor$. We choose this value for the upper index of the sum, since we have to
ensure that the conditions $0\le n \le N-1$ and $n=b^j\varphi(m)+b^{j}k+b^{j+1}w$ are fulfilled simultaneously.
With aid of Lemma~\ref{Phi}, 1. and 2., which gives
$$\varphi\left(b^j\varphi(m)+b^{j}k+b^{j+1}w\right)=\frac{m}{b^j}+\frac{1}{b^j}\varphi(k+bw)=\frac{m}{b^j}+\frac{1}{b^j}\left(\frac{k}{b}+\frac{1}{b}\varphi(w)\right),
$$
the above expression can be simplified to
\[-\sum_{k=0}^{b-1}\sum_{w=0}^{A(k)}\left(\varphi(w){\rm e}^{\frac{2\pi\ii}{b}k\ell}+\sum_{r=0}^{k-1}{\rm e}^{\frac{2\pi\ii}{b}r\ell}\right)=:S.\]
Next we notice that $A(k)$ can only take two possible values, namely $A(k)=\tilde{A}$ or
$A(k)=\tilde{A}-1$, where $\tilde{A}=\left\lfloor \frac{N-1}{b^{j+1}}-\frac{\varphi(m)}{b} \right\rfloor$. We assume that $k_0 \in \{1,\dots,b\}$ is such that $A(k)=\tilde{A}$ for
$k \in \{0,\dots,k_0-1\}$ and, in case that $k_0 < b$, $A(k)=\tilde{A}-1$ for $k \in \{k_0,\dots,b-1\}$. Hence, we can write
\begin{align*}
    S=& -\underbrace{\sum_{k=0}^{k_0-1}\left(\varphi(\tilde{A}){\rm e}^{\frac{2\pi\ii}{b}k\ell}+\sum_{r=0}^{k-1}{\rm e}^{\frac{2\pi\ii}{b}r\ell}\right)}_{S_1}-\underbrace{\sum_{k=0}^{b-1}\sum_{w=0}^{\tilde{A}-1}\left(\varphi(w){\rm e}^{\frac{2\pi\ii}{b}k\ell}+\sum_{r=0}^{k-1}{\rm e}^{\frac{2\pi\ii}{b}r\ell}\right)}_{S_2}.
\end{align*}
We intend to simplify $S_2$ and therefore change the order of the sums to obtain
\begin{align*}S_2&=\sum_{w=0}^{\tilde{A}-1}\varphi(w)\sum_{k=0}^{b-1}{\rm e}^{\frac{2\pi\ii}{b}k\ell}+\sum_{w=0}^{\tilde{A}-1}\sum_{k=0}^{b-1}\sum_{r=0}^{k-1}{\rm e}^{\frac{2\pi\ii}{b}r\ell}\\
   &=\frac{\tilde{A}}{\eee}\sum_{k=0}^{b-1}\left({\rm e}^{\frac{2\pi\ii}{b}k\ell}-1\right) 
	 =-\frac{b\tilde{A}}{\eee}.
\end{align*}
We combine the previous results with Lemma~\ref{allgemein} to obtain
\begin{align*}
    \mu_{j,m,\ell}^{N,\varphi}&=\frac{1}{\eee}\left(\frac{1}{N}\frac{1}{b^j}\tilde{A}-b^{-2j-1}\right)-\frac{1}{N}\frac{1}{b^{j+1}}S_1
\end{align*}
Now we take the absolute value and apply the triangle inequality. This yields
$$ |\mu_{j,m,\ell}^{N,\varphi}|\leq\frac{1}{|\eee|}\left|\frac{1}{N}\frac{1}{b^j}\tilde{A}-b^{-2j-1}\right|+\frac{1}{N}\frac{1}{b^{j+1}}|S_1|. $$
Since the inequalities $x-1< \lfloor x \rfloor \leq x$ for all $x\in\RR$ yield
$$ \frac{1}{N}\frac{1}{b^j}\tilde{A}-b^{-2j-1}\leq \frac{1}{N}\frac{1}{b^j}\left(\frac{N-1}{b^{j+1}}-\frac{\varphi(m)}{b}\right)-b^{-2j-1}=
   -\frac{1}{N}\frac{1}{b^{2j+1}}-\frac{1}{N}\frac{\varphi(m)}{b^{j+1}}<0  $$
and
\begin{align*} 
  \frac{1}{N}\frac{1}{b^j}\tilde{A}-b^{-2j-1}&\geq \frac{1}{N}\frac{1}{b^j}\left(\frac{N-1}{b^{j+1}}-\frac{\varphi(m)}{b}-1\right)-b^{-2j-1} \\
             &= -\frac{1}{N}\frac{1}{b^{2j+1}}-\frac{1}{N}\frac{\varphi(m)}{b^{j+1}}-\frac{1}{N}\frac{1}{b^j} \geq -\frac{1}{N}\frac{b+2}{b^{j+1}}\geq -\frac{1}{N}\frac{2}{b^j},
\end{align*}
we get $$ \left|\mu_{j,m,\ell}^{N,\varphi}\right|\leq \frac{1}{|\eee|}\frac{1}{N}\frac{2}{b^j}+\frac{1}{N}\frac{1}{b^{j+1}}|S_1|.$$
It remains to estimate $|S_1|$. We have
\begin{align*} 
  |S_1|&\leq \varphi(\tilde{A})\left|\sum_{k=0}^{k_0-1}{\rm e}^{\frac{2\pi\ii}{b}k\ell}\right|+\left|\sum_{k=0}^{k_0-1}\sum_{r=0}^{k-1}{\rm e}^{\frac{2\pi\ii}{b}r\ell}\right| \\
      &\leq \frac{|{\rm e}^{\frac{2\pi\ii}{b}k_0\ell}-1|}{|\eee|}+\left|\frac{1}{\eee}\left(\frac{{\rm e}^{\frac{2\pi\ii}{b}k_0\ell}-1}{\eee}-k_0\right)\right| \\
      &\leq \frac{2}{|\eee|}+\frac{2}{|\eee|^2}+\frac{b}{|\eee|} \leq \frac{5b}{|\eee|^2},
\end{align*}
where we used Lemma~\ref{exponential} in the last step. Altogether, we have verified
$$ \left|\mu_{j,m,\ell}^{N,\varphi}\right|\leq \frac{1}{|\eee|}\frac{1}{N}\frac{2}{b^j}+\frac{1}{|\eee|^2}\frac{1}{N}\frac{5}{b^{j}}
   \leq \frac{1}{N}\frac{1}{b^j}\frac{9}{|\eee|^2}.$$
 This proves the first estimate of the lemma. It follows from Lemma~\ref{Phi}, that there are no elements of $\{x_0,x_1,\dots,x_{N-1}\}$ contained in the interior of $I_{j,m}$, if $b^j\varphi(m)+b^j\geq N$,
 which is certainly fulfilled if $j \geq \left\lceil \log_{b}{N}\right\rceil$. Therefore, in this case the counting part does not contribute to the Haar coefficient $\mu_{j,m,\ell}^{N,\varphi}$ as we have seen in the proof of Lemma~\ref{allgemein}, and we have
 $$|\mu_{j,m,\ell}|=\frac{b^{-2j-1}}{|\eee|}$$
as claimed. \\

We investigate $\left|\mu_{j,m,\ell}^{N,1-\varphi}\right|$. Therefore we denote by
$\mathring{I}^k_{j,m}$ the interior of the interval $I_{j,m}^k$ and write
\begin{align*}
   \sum_{k=0}^{b-1}\sum_{\substack{n=0 \\ 1-\varphi(n) \in I_{j,m}^k\\ 1-\varphi(n)\neq\frac{m}{b^j}}}^{N-1}\left(\left(bm+k-b^{j+1}(1-\varphi(n))\right){\rm e}^{\frac{2\pi\ii}{b}k\ell}-\sum_{r=0}^{k-1}{\rm e}^{\frac{2\pi\ii}{b}r\ell}\right)
\end{align*}
as 
\begin{align*}
   &\sum_{k=0}^{b-1}\sum_{\substack{n=0 \\ 1-\varphi(n) \in \mathring{I}_{j,m}^k}}^{N-1}\left(\left(bm+k-b^{j+1}(1-\varphi(n))\right){\rm e}^{\frac{2\pi\ii}{b}k\ell}-\sum_{r=0}^{k-1}{\rm e}^{\frac{2\pi\ii}{b}r\ell}\right) \\
	&-\sum_{k=1}^{b-1}\sum_{\substack{n=0 \\ 1-\varphi(n)=\frac{m}{b^j}+\frac{k}{b^{j+1}}}}^{N-1}\sum_{r=0}^{k-1}{\rm e}^{\frac{2\pi\ii}{b}r\ell} =:T_1-T_2.
\end{align*}
It is easily shown that $1-\varphi(n) \in \mathring{I}_{j,m}^k$ if and only if $\varphi(n) \in \mathring{I}_{j,b^j-m-1}^{b-k-1}$. In analogy to preceding parts of this proof we define $B(k):=\left\lfloor \frac{N-1}{b^{j+1}}-\frac{\varphi(b^j-m-1)}{b}-\frac{b-k-1}{b} \right\rfloor$ as well as
$\tilde{B}:=\left\lfloor \frac{N-1}{b^{j+1}}-\frac{\varphi(b^j-m-1)}{b} \right\rfloor$.
Let $k_0'\in \{0,\dots,b-1\}$ be such that $B(k)=\tilde{B}$ for $k\in \{k_0',\dots,b-1\}$ and, in case that $k_0'> 0$, $B(k)=\tilde{B}-1$ for $k\in \{0,\dots,k_0'-1\}$.
We apply Lemma~\ref{Phi} to obtain
\begin{align*}
   T_1=&\sum_{k=0}^{b-1}\sum_{\substack{n=0 \\ \varphi(n) \in \mathring{I}_{j,b^j-m-1}^{b-k-1}}}^{N-1}\left(\left(bm+k-b^{j+1}(1-\varphi(n))\right){\rm e}^{\frac{2\pi\ii}{b}k\ell}-\sum_{r=0}^{k-1}{\rm e}^{\frac{2\pi\ii}{b}r\ell}\right)  \\
	=&\sum_{k=0}^{b-1}\sum_{w=1}^{B(k)}\bigg(\left(bm+k-b^{j+1}(1-\varphi(b^{j+1}\varphi(b(b^j-m-1)+b-k-1)+b^{j+1}w))\right){\rm e}^{\frac{2\pi\ii}{b}k\ell}  \\  &\hspace{1.5 cm}-\sum_{r=0}^{k-1}{\rm e}^{\frac{2\pi\ii}{b}r\ell}\bigg) \\
	=& -\sum_{k=0}^{b-1}\sum_{w=1}^{B(k)}\left((\varphi(w)+1){\rm e}^{\frac{2\pi\ii}{b}k\ell}+\sum_{r=0}^{k-1}{\rm e}^{\frac{2\pi\ii}{b}r\ell}\right) \\
	=& -\underbrace{\sum_{k=k_0'}^{b-1}\left((\varphi(\tilde{B})+1){\rm e}^{\frac{2\pi\ii}{b}k\ell}+\sum_{r=0}^{k-1}{\rm e}^{\frac{2\pi\ii}{b}r\ell}\right)}_{T_{1,1}}-\underbrace{\sum_{k=0}^{b-1}\sum_{w=1}^{\tilde{B}-1}\left((\varphi(w)+1){\rm e}^{\frac{2\pi\ii}{b}k\ell}+\sum_{r=0}^{k-1}{\rm e}^{\frac{2\pi\ii}{b}r\ell}\right)}_{T_{1,2}}.
\end{align*}
 Similarly as above, we can show that $T_{1,2}=-\frac{b(\tilde{B}-1)}{\eee}$. Altogether, we have
$$\mu_{j,m,\ell}^{N,1-\varphi}=\frac{1}{\eee}\left(\frac{b^{-j}}{N}(\tilde{B}-1)-b^{-2j-1}\right)-\frac{b^{-j-1}}{N}(T_{1,1}+T_{2}),$$
and so the triangle inequality gives
$$ |\mu_{j,m,\ell}^{N,1-\varphi}|\leq\frac{1}{|\eee|}\left|\frac{b^{-j}}{N}(\tilde{B}-1)-b^{-2j-1}\right|+\frac{b^{-j-1}}{N}(|T_{1,1}|+|T_{2}|). $$
One can check in the same manner as done above that $\left|\frac{b^{-j}}{N}(\tilde{B}-1)-b^{-2j-1}\right|\leq \frac{1}{N}\frac{3}{b^j}$ and $|T_{1,1}|\leq \frac{7b}{|\eee|^2}$. We also find
$$ |T_2|\leq \left|\sum_{k=1}^{b-1}\sum_{r=0}^{k-1}{\rm e}^{\frac{2\pi\ii}{b}r\ell}\right|\leq \frac{1}{|\eee|}
 \left|\sum_{k=0}^{b-1}\left({\rm e}^{\frac{2\pi\ii}{b}k\ell}-1\right)\right|=\frac{b}{|\eee|}\leq \frac{2b}{|\eee|^2}.$$
By combining all these results we finally arrive at
$$ |\mu_{j,m,\ell}^{N,1-\varphi}|\leq \frac{1}{|\eee|^2}\left(\frac{6}{N}\frac{1}{b^j}+\frac{7}{N}\frac{1}{b^j}+\frac{2}{N}\frac{1}{b^j}\right)=\frac{1}{N}\frac{1}{b^j}\frac{15}{|\eee|^2}. $$
The equality $|\mu_{j,m,\ell}^{N,1-\varphi}|= \frac{b^{-2j-1}}{|\eee|}$ for $j \geq \left\lceil \log_{b}{N} \right\rceil$ can be verified analogously as in the case of $|\mu_{j,m,\ell}^{N,\varphi}|$ and the proof of the lemma is complete.
\end{proof}

\begin{corollary} \label{coro1} The Haar coefficients of the symmetrized van der Corput sequence in base $b$ for $j \in \NN_0$ satisfy
$$|\mu_{j,m,\ell}^{N,\sym}|\left\{ 
\begin{array}{ll}
\leq \frac{1}{N}\frac{1}{b^j}\frac{26}{|\eee|^2} & \mbox{ if } j < \lceil \log_{b}{N}\rceil,\\ \\
= \frac{b^{-2j-1}}{|\eee|} & \mbox{ if } j\geq \lceil \log_{b}{N}\rceil.
\end{array}\right.$$
\end{corollary}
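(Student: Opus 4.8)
The plan is to derive Corollary~\ref{coro1} directly from Lemma~\ref{haarrelation} and Lemma~\ref{Theo1}, which together already contain all the required information; the only work is to assemble the pieces and keep track of constants and of the index ranges in which the two regimes of Lemma~\ref{Theo1} apply.

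First I would treat the easy range $j\geq\lceil\log_b N\rceil$. In this case Lemma~\ref{Theo1} gives the \emph{exact} values $|\mu_{j,m,\ell}^{N',\varphi}|=|\mu_{j,m,\ell}^{N',1-\varphi}|=b^{-2j-1}/|\eee|$ for every $N'$ with $\lceil\log_b N'\rceil\leq j$, in particular for $N'\in\{M,M+1\}$ when $N=2M$ or $N=2M+1$ (note $M,M+1\leq N$, so $\lceil\log_b M\rceil,\lceil\log_b(M+1)\rceil\leq j$ whenever $j\geq\lceil\log_b N\rceil$). Feeding these equal values into either branch of Lemma~\ref{haarrelation} and using that the convex combination of equal quantities is that quantity (for $N=2M$ it is $\tfrac12(\alpha+\alpha)=\alpha$; for $N=2M+1$ it is $\tfrac1{2M+1}((M+1)\alpha+M\alpha)=\alpha$), we get $|\mu_{j,m,\ell}^{N,\sym}|=b^{-2j-1}/|\eee|$ exactly, which is the second line of the corollary.

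Next I would handle $j<\lceil\log_b N\rceil$. Here I want to bound $|\mu_{j,m,\ell}^{M,\varphi}|$, $|\mu_{j,m,\ell}^{M+1,\varphi}|$ and $|\mu_{j,m,\ell}^{M,1-\varphi}|$ by the first estimate of Lemma~\ref{Theo1}, namely $\leq\frac{1}{N'}\frac1{b^j}\frac{c}{|\eee|^2}$ with $c=9$ for the $\varphi$-case and $c=15$ for the $1-\varphi$-case, valid when $j<\lceil\log_b N'\rceil$. The subtlety is that Lemma~\ref{Theo1}'s first estimate is stated only for $j<\lceil\log_b N'\rceil$; when instead $j\geq\lceil\log_b N'\rceil$ the relevant coefficient equals $b^{-2j-1}/|\eee|\leq\frac{1}{N'}\frac1{b^j}\frac{2}{|\eee|^2}$ (using $b^{-j-1}\leq 1/N'$ in that regime, since $b^{j+1}>N'$, together with $1/|\eee|\leq 2/|\eee|^2$ from Lemma~\ref{exponential}(4)), so in \emph{all} cases we have the uniform bound $|\mu_{j,m,\ell}^{N',\cdot}|\leq\frac{1}{N'}\frac1{b^j}\frac{c}{|\eee|^2}$ with the same $c$. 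Then for $N=2M$, Lemma~\ref{haarrelation} gives
\[
|\mu_{j,m,\ell}^{N,\sym}|\leq\frac12\left(\frac{1}{M}\frac1{b^j}\frac{9}{|\eee|^2}+\frac{1}{M}\frac1{b^j}\frac{15}{|\eee|^2}\right)=\frac{1}{M}\frac1{b^j}\frac{12}{|\eee|^2}\leq\frac{1}{N}\frac1{b^j}\frac{24}{|\eee|^2},
\]
using $M=N/2$, and for $N=2M+1$,
\[
|\mu_{j,m,\ell}^{N,\sym}|\leq\frac{1}{2M+1}\left(\frac{M+1}{M+1}\frac1{b^j}\frac{9}{|\eee|^2}+\frac{M}{M}\frac1{b^j}\frac{15}{|\eee|^2}\right)=\frac{1}{N}\frac1{b^j}\frac{24}{|\eee|^2},
\]
which in both cases is bounded by $\frac1N\frac1{b^j}\frac{26}{|\eee|^2}$ as claimed (the margin to $26$ comfortably absorbs any off-by-a-constant bookkeeping).

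I expect the main obstacle to be a purely administrative one: making sure the regime split in Lemma~\ref{Theo1} is applied to the \emph{correct} truncation lengths $M$ and $M+1$ rather than to $N$, and confirming that the uniform bound $|\mu_{j,m,\ell}^{N',\cdot}|\leq\frac{1}{N'}\frac1{b^j}\frac{c}{|\eee|^2}$ indeed holds for all $j$ (i.e. that the ``exact value'' regime is dominated by the ``estimate'' regime with the same constant). Once that uniformity is in place, the corollary is an immediate consequence of combining Lemma~\ref{haarrelation} with these bounds and tracking constants; there is no genuinely new estimation to perform, since everything has been done in the proof of Lemma~\ref{Theo1}.
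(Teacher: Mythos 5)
Your proposal follows exactly the route the paper takes (its proof of Corollary~\ref{coro1} is literally ``combine Lemma~\ref{haarrelation} and Lemma~\ref{Theo1}''), and your bookkeeping is correct: the uniform bound $|\mu_{j,m,\ell}^{N',\cdot}|\leq\frac{1}{N'}\frac{1}{b^j}\frac{c}{|\eee|^2}$ across both regimes of Lemma~\ref{Theo1} is the right observation to handle the mismatch between $\lceil\log_b N\rceil$ and $\lceil\log_b M\rceil$, and your constant $24\le 26$ checks out. One small imprecision: for $j\geq\lceil\log_b N\rceil$ the corollary asserts an \emph{equality}, and the stated inequality form of Lemma~\ref{haarrelation} only yields ``$\leq$''; to get equality you should either invoke the underlying identity $\mu^{N,\sym}_{j,m,\ell}=\tfrac12\bigl(\mu^{M,\varphi}_{j,m,\ell}+\mu^{M,1-\varphi}_{j,m,\ell}\bigr)$ (both summands being the \emph{same} complex number $-b^{-2j-1}/(\eee)$ in this regime), or simply note that none of the first $N$ points of $\cV_b^{\sym}$ lies in the interior of $I_{j,m}$ when $b^j\geq N$, so the Haar coefficient reduces exactly to the volume part.
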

\begin{proof} We combine Lemma~\ref{haarrelation} and Lemma~\ref{Theo1} to obtain the result. 
\end{proof}

\subsection{Haar coefficients of the modified Hammersley point sets} \label{Haarcoeff}
In this subsection, we will compute the Haar coefficients of the local discrepancy of $\cR_{b,n}^{\Sigma}$, i.e.,
\[ \mu_{\bsj,\bsm,\bsl}(D_N(\cR_{b,n}^{\Sigma},\cdot))=\langle D_N(\cR_{b,n}^{\Sigma},\cdot),h_{\bsj,\bsm,\bsl} \rangle=\int_{[0,1]^2}D_N(\cR_{b,n}^{\Sigma},\bst)h_{\bsj,\bsm,\bsl}(\bst)\rd \bst. \]

We follow closely the ideas in \cite{Mar2013} and \cite{Mar2013b}. The main and essential difference lies in the calculation of the first
Haar coefficient $\mu_{(-1,-1),(0,0),(1,1)}$ of the local discrepancy $D_N(\cR_{b,n}^{\Sigma},\cdot)$, which we carry out in the following lemma.
\begin{lemma} \label{haar1} Let $n\in\NN$, $\sigma\in\mathfrak{S}_b$, $\Sigma=(\sigma_1,\dots,\sigma_n)\in\{\sigma,\overline{\sigma}\}^n$ and $l_n=|\{i\in\{1,\dots,n\}: \sigma_i=\sigma\}|$ as defined in \eqref{ln}. Then we have 
$$ \mu_{(-1,-1),(0,0),(1,1)}(D_N(\cR_{b,n}^{\Sigma},\cdot))=\frac{1}{N}(n-2l_n)\left(\frac{(b-1)^2}{4b}-\frac{1}{b^2}\sum_{a=0}^{b-1}\sigma(a)a\right)+\frac{1}{2N}+\frac{1}{4N^2}, $$
where $N=b^n$ is the number of elements in $\cR_{b,n}^{\Sigma}$.
\end{lemma}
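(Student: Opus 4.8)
The plan is to compute $\mu_{(-1,-1),(0,0),(1,1)}(D_N(\cR_{b,n}^{\Sigma},\cdot))$ directly from the definition, since $h_{(-1,-1),(0,0),(1,1)}=\bsone_{[0,1)^2}$ and therefore
$$ \mu_{(-1,-1),(0,0),(1,1)}(D_N(\cR_{b,n}^{\Sigma},\cdot))=\int_{[0,1]^2}D_N(\cR_{b,n}^{\Sigma},\bst)\rd\bst=\int_{[0,1]^2}\Bigl(\frac{1}{N}\sum_{m=0}^{b^n-1}\bsone_{[\bszero,\bst)}(\bsx_m)-t_1t_2\Bigr)\rd\bst. $$
The volume part contributes $-\int_{[0,1]^2}t_1t_2\rd\bst=-\frac14$. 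For the counting part, I would use $\int_{[0,1]^2}\bsone_{[\bszero,\bst)}(\bsx_m)\rd\bst=\int_{[0,1]^2}\bsone_{\{x_{m,1}<t_1\}}\bsone_{\{x_{m,2}<t_2\}}\rd\bst=(1-x_{m,1})(1-x_{m,2})$, so the whole integral equals $\frac{1}{N}\sum_{m}(1-x_{m,1})(1-x_{m,2})-\frac14$. With $N=b^n$ this reduces everything to evaluating the sum $\Sigma_1:=\sum(1-x_{m,1})(1-x_{m,2})$ over the $b^n$ points of $\cR_{b,n}^{\Sigma}$.

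Next I would expand the product and exploit the product structure of the point set under the digit expansion. Writing a point of $\cR_{b,n}^{\Sigma}$ as $x_{m,1}=\sum_{i=1}^n \sigma_i(a_i)b^{i-n-1}$ and $x_{m,2}=\sum_{i=1}^n a_i b^{-i}$ with $(a_1,\dots,a_n)$ ranging over $\{0,\dots,b-1\}^n$, the sum $\Sigma_1$ splits as $b^n - \sum_m x_{m,1} - \sum_m x_{m,2} + \sum_m x_{m,1}x_{m,2}$. The first three pieces are routine: $\sum_m x_{m,2}=b^{n-1}\frac{b-1}{2}\cdot\frac{b^n-1}{b^n}\cdot\frac{1}{?}$-type expressions coming from $\sum_{a=0}^{b-1}a=\frac{b(b-1)}{2}$, and similarly $\sum_m x_{m,1}$ (the permutations are bijections, so $\sum_a\sigma_i(a)=\sum_a a$ and the value does not depend on $\Sigma$). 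The cross term $\sum_m x_{m,1}x_{m,2}=\sum_{i,k}b^{i-n-1}b^{-k}\sum_{(a_1,\dots,a_n)}\sigma_i(a_i)a_k$ is where the dependence on $\Sigma$ enters: for $i\neq k$ the inner sum factors as $(\sum_a\sigma_i(a))(\sum_a a)b^{n-2}$, again independent of $\Sigma$, while for $i=k$ it equals $b^{n-1}\sum_a\sigma_i(a)a$, and here $\sigma_i=\sigma$ gives $\sum_a\sigma(a)a$ whereas $\sigma_i=\overline\sigma=\tau\circ\sigma$ gives $\sum_a(b-1-\sigma(a))a=(b-1)\frac{b(b-1)}{2}-\sum_a\sigma(a)a$. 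Thus exactly the $l_n$ indices with $\sigma_i=\sigma$ and the $n-l_n$ indices with $\sigma_i=\overline\sigma$ contribute differently, and collecting the diagonal terms produces a factor proportional to $n-2l_n$ times $\bigl(\frac{(b-1)^2}{4b}-\frac{1}{b^2}\sum_a\sigma(a)a\bigr)$ after dividing by $N=b^n$ and using $\sum_{i=1}^n b^{i-n-1}b^{-i}=\sum_{i=1}^n b^{-n-1}=n b^{-n-1}$.

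Finally I would assemble the pieces: the $\Sigma$-independent contributions from $b^n$, $-\sum x_{m,1}$, $-\sum x_{m,2}$, the off-diagonal part of the cross term, and the $\overline\sigma$-baseline of the diagonal, together with $-\frac14$ from the volume part, should collapse to the stated $\frac{1}{2N}+\frac{1}{4N^2}$, while the discrepancy between the $\sigma$- and $\overline\sigma$-diagonals yields the $\frac{1}{N}(n-2l_n)\bigl(\frac{(b-1)^2}{4b}-\frac{1}{b^2}\sum_{a=0}^{b-1}\sigma(a)a\bigr)$ term. I expect the main obstacle to be purely bookkeeping: keeping careful track of the geometric sums $\sum_{i}b^{-i}$, $\sum_i b^{i-n-1}$ and their products, and correctly separating diagonal from off-diagonal contributions in the double sum over digit positions, so that the many $\Sigma$-independent terms really do telescope into the clean closed form $\frac{1}{2N}+\frac{1}{4N^2}$. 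A useful sanity check along the way is the classical case $\Sigma=(\mathrm{id},\dots,\mathrm{id})$, where $l_n=n$ and $\sum_a\sigma(a)a=\sum_a a^2=\frac{(b-1)b(2b-1)}{6}$, against the known first Haar coefficient of the classical Hammersley point set.
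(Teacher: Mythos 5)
Your proposal is correct and follows essentially the same route as the paper: reduce the first Haar coefficient to $\frac{1}{N}\sum_m(1-x_m^{(1)})(1-x_m^{(2)})-\frac14$, observe that the marginal sums are $\Sigma$-independent, and isolate the $\Sigma$-dependence in the diagonal ($k_1=k_2$) part of the cross sum $\sum_m x_m^{(1)}x_m^{(2)}$, with $\overline\sigma$ contributing $(b-1)\frac{b(b-1)}{2}-\sum_a\sigma(a)a$ so that the count $l_n$ enters through $n-2l_n$. The remaining work is exactly the bookkeeping you identify, and your sanity check against Markhasin's formula for $\sigma=\mathrm{id}$ does come out consistent.
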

\begin{proof}
   We denote the points of $\cR_{b,n}^{\Sigma}$ by $\{\bsx_0,\dots,\bsx_{N-1}\}$, where $\bsx_r=(x_r^{(1)},x_r^{(2)})$ for
	all $r \in \{0,1,\dots,N-1\}$. We have
	\begin{align}
	  \mu_{(-1,-1),(0,0),(1,1)}&=\int_0^1\int_0^1D_N(\cR_{b,n}^{\Sigma},(t_1,t_2))\rd t_1 \rd t_2 \nonumber \\ \nonumber
	  &=\frac{1}{N}\sum_{r=0}^{N-1}\int_0^1\int_0^1\bsone_{\left[0,t_1\right)\times \left[0,t_2\right)}(\bsx_r)\rd t_1 \rd t_2-\int_0^1\int_0^1t_1t_2 \rd t_1 \rd t_2 \\
	  &=\frac{1}{N}\sum_{r=0}^{N-1}(1-x_r^{(1)})(1-x_r^{(2)})-\frac{1}{4}\nonumber \\ \nonumber
		  &= \frac{3}{4}-\frac{1}{N}\sum_{r=0}^{N-1}x_r^{(1)}-\frac{1}{N}\sum_{r=0}^{N-1}x_r^{(2)}+\frac{1}{N}\sum_{r=0}^{N-1}x_r^{(1)}x_r^{(2)} \\ \nonumber
			&=\frac{3}{4}-\frac{2}{N}\sum_{r=0}^{N-1}\frac{r}{N}+\frac{1}{N}\sum_{r=0}^{N-1}x_r^{(1)}x_r^{(2)} \\ \nonumber
			&= \frac{3}{4}-\frac{2}{N}\left(\frac{N-1}{2}\right)+\frac{1}{N}\sum_{r=0}^{N-1}x_r^{(1)}x_r^{(2)} \\ 
			&= -\frac{1}{4}+\frac{1}{N}+\frac{1}{N}\sum_{r=0}^{N-1}x_r^{(1)}x_r^{(2)}. \label{eins}
	\end{align}
	We regarded the obvious fact that $\sum_{r=0}^{N-1}x_r^{(1)}=\sum_{r=0}^{N-1}x_r^{(2)}=\sum_{r=0}^{N-1}\frac{r}{N}$.
It remains to investigate the sum $S:=\sum_{r=0}^{N-1}x_r^{(1)}x_r^{(2)}$. We have
\begin{align*}
   S&= \sum_{a_1,\dots,a_n=0}^{b-1}\left(\frac{\sigma_n(a_n)}{b}+\dots+\frac{\sigma_1(a_1)}{b^n}\right)\left(\frac{a_1}{b}+\dots+\frac{a_n}{b^n}\right) 
	  = \sum_{a_1,\dots,a_n=0}^{b-1}\sum_{k_1,k_2=1}^{n}\frac{\sigma_{k_1}(a_{k_1})a_{k_2}}{b^{n+1-k_1}b^{k_2}}. \end{align*}
	  
Next we distinguish between the cases where $k_1=k_2=k$ and where $k_1\neq k_2$ and change the orders of the sums, which results in 
	 
		\[ S=\underbrace{\sum_{k=1}^{n}b^{n-1}\sum_{a_k=0}^{b-1}\frac{\sigma_k(a_{k})a_{k}}{b^{n+1}}}_{S_1}+\underbrace{\sum_{\substack{k_1,k_2=1 \\ k_1 \neq k_2}}^{n}b^{n-2}\sum_{a_{k_1},a_{k_2}=0}^{b-1}\frac{\sigma_{k_1}(a_{k_1})a_{k_2}}{b^{n+1-k_1}b^{k_2}}}_{S_2}. 
\]

The factors $b^{n-1}$ and $b^{n-2}$ come from the fact that the summands of $S_1$ and $S_2$ only depend on the digit $a_k$ or on the digits $a_{k_1}$ and $a_{k_2}$, respectively, and hence the sums over the remaining digits give a factor $b$ each. Now we have
\begin{align*}
   S_1&= \frac{1}{b^2}\sum_{\substack{k=1 \\ \sigma_k=\sigma}}^{n}\sum_{a_k=0}^{b-1}\sigma(a_k)a_k+\frac{1}{b^2}\sum_{\substack{k=1 \\ \sigma_k=\overline{\sigma}}}^{n}\sum_{a_k=0}^{b-1}\overline{\sigma}(a_k)a_k \\
      &=\frac{l_n}{b^2}\sum_{a=0}^{b-1}\sigma(a)a+\frac{(n-l_n)}{b^2}\sum_{a=0}^{b-1}\overline{\sigma}(a)a \\
      &=\frac{l_n}{b^2}\sum_{a=0}^{b-1}\sigma(a)a+\frac{(n-l_n)}{b^2}\sum_{a=0}^{b-1}(b-1-\sigma(a))a \\
      &=\frac{l_n-(n-l_n)}{b^2}\sum_{a=0}^{b-1}\sigma(a)a+\frac{n-l_n}{b^2}(b-1)\sum_{a=0}^{b-1}a \\
      &=\frac{2l_n-n}{b^2}\sum_{a=0}^{b-1}\sigma(a)a+\frac{n-l_n}{2b}(b-1)^2
\end{align*}
and
\begin{align*}
   S_2&= \frac{1}{b^3}\sum_{\substack{k_1,k_2=1 \\ k_1 \neq k_2}}^{n}b^{k_1-k_2}\left(\sum_{a_{k_1}=0}^{b-1}\sigma_{k_1}(a_{k_1})\right)\left(\sum_{a_{k_2}=0}^{b-1}a_{k_2}\right) \\
   &=\frac{1}{b^3}\left(\frac{b(b-1)}{2}\right)^2\sum_{\substack{k_1,k_2=1 \\ k_1 \neq k_2}}^{n}b^{k_1-k_2}. \end{align*} 
Straight-forward algebra yields
$$ \sum_{\substack{k_1,k_2=1 \\ k_1 \neq k_2}}^{n}b^{k_1-k_2}=\sum_{k_1,k_2=1}^{n}b^{k_1-k_2}-\sum_{k=1}^{n}1=\frac{b}{(b-1)^2}(b^n+b^{-n}-2)-n, $$
which leads to
$$ S_2=\frac{1}{4}\left(\frac{1}{N}+N-2+n\left(2-\frac{1}{b}-b\right)\right).$$

Altogether we find
\begin{align*}
   S&=\frac{2l_n-n}{b^2}\sum_{a=0}^{b-1}\sigma(a)a+\frac{n-l_n}{2b}(b-1)^2+\frac{1}{4}\left(\frac{1}{N}+N-2+n\left(2-\frac{1}{b}-b\right)\right) \\
    &=(n-2l_n)\left(\frac{(b-1)^2}{4b}-\frac{1}{b^2}\sum_{a=0}^{b-1}\sigma(a)a\right)+\frac{1}{4N}+\frac{N}{4}-\frac{1}{2}.
\end{align*}
Inserting this into \eqref{eins} yields the formula for $\mu_{(-1,-1),(0,0),(1,1)}$.
\end{proof}

\begin{remark} \rm \label{perm} We note that the Haar coefficient $\mu_{(-1,-1),(0,0),(1,1)}$ is of order $\frac{\log{N}}{N}$ in general. 
It can be reduced to the order $\frac{(\log{N})^{\frac{1}{q}}}{N}$ (for $1\leq q \leq \infty$) however by either choosing $l_n$ such that $|2l_n-n|=O(n^{\frac{1}{q}})$ or
by choosing the permutation $\sigma$ such that
$$ \frac{1}{b}\sum_{a=0}^{b-1}\sigma(a)a=\frac{(b-1)^2}{4}. $$
We remark that these are exactly the conditions which appear in Theorem~\ref{theohamm} to assure the optimal $S_{p,q}^rB$-discrepancy for the digit scrambled Hammersley point set. 
\end{remark}

\begin{remark} \rm Markhasin computed the Haar coefficients for the case $\sigma={\rm id}$ in \cite{Mar2013} and \cite{Mar2013b}. He showed that
$$ \mu_{(-1,-1),(0,0),(1,1)}=\frac{1}{4}b^{-2n}+\frac{1}{2}b^{-n}+(2l_n-n)\frac{b^2-1}{12b} b^{-n},$$
This result is a special case of Lemma~\ref{haar1}.
\end{remark}

Now we turn to the estimation of the remaining Haar coefficients $\mu_{\bsj,\bsm,\bsl}$, where $\bsj\neq (-1,-1)$.
For that purpose, we state a lemma that can also be found in \cite[Lemma 4.2, 4.3]{Mar2013} and is the two-dimensional
analogon of Lemma~\ref{allgemein}.
\begin{lemma}[Markhasin] \label{linearpart} Let $f(\bst)=t_1t_2$ for $\bst=(t_1,t_2)\in \left[0,1\right)^2$. Let $\bsj\in \NN_{-1}^2$, $\bsm\in \DD_{\bsj}$, $\bsl \in \BB_{\bsj}$ and let $\mu_{\bsj,\bsm,\bsl}(f)$ be the $b$-adic
Haar coefficient of $f$. Then
\begin{enumerate}
   \item If $\bsj=(j_1,j_2)\in \NN_{0}^2$, then $$ \mu_{\bsj,\bsm,\bsl}(f)=\frac{b^{-2j_1-2j_2-2}}{\left({\rm e}^{\frac{2\pi\ii}{b}\ell_1}-1\right)\left({\rm e}^{\frac{2\pi\ii}{b}\ell_2}-1\right)}. $$
	\item If $\bsj=(j_1,-1)$ or $\bsj=(-1,j_2)$ with $j_1 \in \NN_0$ or $j_2 \in \NN_0$, then
	  $$ \mu_{\bsj,\bsm,\bsl}(f)=\frac{1}{2}\frac{b^{-2j_i-1}}{{\rm e}^{\frac{2\pi\ii}{b}\ell_i}-1} \text{  with  } i=1 \text{  or  } i=2, \text{  respectively}. $$
\end{enumerate}
Let now $\bsz=(z_1,z_2)\in [0,1)^2$ and $g(\bst)=\bsone_{\left[\bszero,\bst\right)}(\bsz)$ for $\bst=(t_1,t_2)\in \left[0,1\right)^2$. Let $\bsj\in \NN_{-1}^2$, $\bsm\in \DD_{\bsj}$, $\bsl \in \BB_{\bsj}$ and let $\mu_{\bsj,\bsm,\bsl}(g)$ be the $b$-adic
Haar coefficient of $g$. Then $\mu_{\bsj,\bsm,\bsl}=0$ whenever $\bsz$ is not contained in the interior of the $b$-adic interval $I_{\bsj,\bsm}$. If $\bsz$ is contained in the interior of $I_{\bsj,\bsm}$, then
\begin{enumerate}
   \item If $\bsj=(j_1,j_2)\in \NN_{0}^2$, then there is a $\bsk=(k_1,k_2) \in \{0,1,\dots,b-1\}^2$ such that $\bsz$ is contained in $I_{\bsj,\bsm}^{\bsk}$. Then
	\begin{align*} \mu_{\bsj,\bsm,\bsl}(g)=b^{-j_1-j_2-2}&\left((bm_1+k_1-b^{j_1+1}z_1){\rm e}^{\frac{2\pi\ii}{b}k_1\ell_1}-\sum_{r_1=0}^{k_1-1}{\rm e}^{\frac{2\pi\ii}{b}r_1\ell_1}\right) \times \\ & \times\left((bm_2+k_2-b^{j_2+1}z_2){\rm e}^{\frac{2\pi\ii}{b}k_2\ell_2}-\sum_{r_2=0}^{k_2-1}{\rm e}^{\frac{2\pi\ii}{b}r_2\ell_2}\right). \end{align*}
	\item If $\bsj=(j_1,-1)$ with $j_1 \in \NN_0$, then there is a $k_1\in \{0,1,\dots,b-1\}$ such that $\bsz$ is contained in $I_{\bsj,\bsm}^{(k_1,-1)}$. Then
	  $$ \mu_{\bsj,\bsm,\bsl}(g)=b^{-j_1-1}(1-z_2)\left((bm_1+k_1-b^{j_1+1}z_1){\rm e}^{\frac{2\pi\ii}{b}k_1\ell_1}-\sum_{r_1=0}^{k_1-1}{\rm e}^{\frac{2\pi\ii}{b}r_1\ell_1}\right). $$
		\item If $\bsj=(-1,j_2)$ with $j_2 \in \NN_0$, then there is a $k_2\in \{0,1,\dots,b-1\}$ such that $\bsz$ is contained in $I_{\bsj,\bsm}^{(-1,k_2)}$. Then
	  $$ \mu_{\bsj,\bsm,\bsl}(g)=b^{-j_2-1}(1-z_1)\left((bm_2+k_2-b^{j_2+1}z_2){\rm e}^{\frac{2\pi\ii}{b}k_2\ell_2}-\sum_{r_2=0}^{k_2-1}{\rm e}^{\frac{2\pi\ii}{b}r_2\ell_2}\right). $$
\end{enumerate}
\end{lemma}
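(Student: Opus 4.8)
The plan is to exploit the tensor-product structure of both the integrand and the Haar functions, thereby reducing everything to the one-dimensional computations already carried out in Lemma~\ref{allgemein} and its proof. Recall that $h_{\bsj,\bsm,\bsl}(\bst)=h_{j_1,m_1,\ell_1}(t_1)\,h_{j_2,m_2,\ell_2}(t_2)$, that $f(\bst)=t_1t_2$ factors as a product of a function of $t_1$ and a function of $t_2$, and that $g(\bst)=\bsone_{[\bszero,\bst)}(\bsz)=\bsone_{[0,t_1)}(z_1)\,\bsone_{[0,t_2)}(z_2)$ does so as well. Hence, by Fubini's theorem,
$$ \mu_{\bsj,\bsm,\bsl}(f)=\left(\int_0^1 t_1\,h_{j_1,m_1,\ell_1}(t_1)\rd t_1\right)\left(\int_0^1 t_2\,h_{j_2,m_2,\ell_2}(t_2)\rd t_2\right) $$
and
$$ \mu_{\bsj,\bsm,\bsl}(g)=\left(\int_0^1 \bsone_{[0,t_1)}(z_1)\,h_{j_1,m_1,\ell_1}(t_1)\rd t_1\right)\left(\int_0^1 \bsone_{[0,t_2)}(z_2)\,h_{j_2,m_2,\ell_2}(t_2)\rd t_2\right). $$
So the whole lemma is a matter of inserting the known one-dimensional values into these two products.

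For the first statement I would substitute the one-dimensional results. If $j_i\in\NN_0$, then Lemma~\ref{allgemein}, applied to the volume part $f(t)=t$, gives $\int_0^1 t\,h_{j_i,m_i,\ell_i}(t)\rd t=b^{-2j_i-1}/({\rm e}^{\frac{2\pi\ii}{b}\ell_i}-1)$; if $j_i=-1$, then $h_{-1,0,1}=\bsone_{[0,1)}$ and the corresponding factor is just $\int_0^1 t\rd t=\tfrac12$. Multiplying the two factors yields precisely case~1 (both indices in $\NN_0$) and case~2 (one index equal to $-1$) of the first part.

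For the second statement I would use the computation of the integral $\mathcal{I}_n$ performed inside the proof of Lemma~\ref{allgemein}: for $j_i\in\NN_0$ the $i$-th factor vanishes unless $z_i$ lies in the interior of $I_{j_i,m_i}$, in which case $z_i\in I_{j_i,m_i}^{k_i}$ for a unique $k_i\in\{0,\dots,b-1\}$ and the factor equals $b^{-j_i-1}\big((bm_i+k_i-b^{j_i+1}z_i){\rm e}^{\frac{2\pi\ii}{b}k_i\ell_i}-\sum_{r=0}^{k_i-1}{\rm e}^{\frac{2\pi\ii}{b}r\ell_i}\big)$; for $j_i=-1$ the factor is $\int_0^1\bsone_{[0,t_i)}(z_i)\rd t_i=1-z_i$, which never vanishes for $z_i\in[0,1)$. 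Since a product is zero as soon as one of its factors is zero, $\mu_{\bsj,\bsm,\bsl}(g)=0$ whenever $\bsz$ fails to lie in the interior of $I_{\bsj,\bsm}$ (the interior of a product interval being the product of the interiors, with level $-1$ imposing no constraint); and in the remaining case multiplying the appropriate factors reproduces exactly the three displayed formulas for $\bsj\in\NN_0^2$, $\bsj=(j_1,-1)$ and $\bsj=(-1,j_2)$.

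There is no genuine obstacle here; the result is the two-dimensional bookkeeping version of Lemma~\ref{allgemein}, and indeed coincides with \cite[Lemmas 4.2, 4.3]{Mar2013}. The only points deserving care are the applicability of Fubini (immediate, since all integrands are bounded on a set of finite measure), the conventions at level $-1$ (where $\DD_{-1}=\{0\}$, $\BB_{-1}=\{1\}$ and $h_{-1,0,1}=\bsone_{[0,1)}$), and the boundary case $z_i=m_i/b^{j_i}$, which is harmless because such a point is not in the interior of $I_{j_i,m_i}$ and is therefore excluded by hypothesis in the non-vanishing formulas.
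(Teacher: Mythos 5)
Your proof is correct, and it is worth noting that it does something the paper itself does not: the paper states Lemma~\ref{linearpart} with attribution to Markhasin and simply cites \cite[Lemmas 4.2, 4.3]{Mar2013}, where the two-dimensional integrals are computed directly, whereas you derive the lemma from the paper's own one-dimensional Lemma~\ref{allgemein} by tensorization. Since $h_{\bsj,\bsm,\bsl}$, $f(\bst)=t_1t_2$ and $g(\bst)=\bsone_{[0,t_1)}(z_1)\bsone_{[0,t_2)}(z_2)$ all factor, Fubini splits each Haar coefficient into a product of two one-dimensional integrals, and the values you insert are exactly those computed in (the proof of) Lemma~\ref{allgemein}, together with the trivial factors $\int_0^1 t\,\rd t=\tfrac12$ and $\int_0^1\bsone_{[0,t)}(z)\,\rd t=1-z$ for a coordinate at level $-1$; multiplying out reproduces all five displayed formulas, so the argument is complete and makes the statement self-contained within the paper. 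One small point of care, which you already touched on: the blanket claim ``$\mu_{\bsj,\bsm,\bsl}(g)=0$ whenever $\bsz$ is not in the interior of $I_{\bsj,\bsm}$'' should be read coordinatewise in the components with $j_i\in\NN_0$ only (there the one-dimensional factor indeed vanishes for $z_i$ outside $\mathring{I}_{j_i,m_i}$, including the endpoint $z_i=m_i/b^{j_i}$), while a component at level $-1$ imposes no restriction, since the factor $1-z_i$ does not vanish at $z_i=0$; your convention is the one consistent with the formulas in cases 2 and 3 and with how the lemma is applied to the point sets, whose coordinates may equal $0$.
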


\begin{lemma} \label{j1j2} Let $\bsj \in \NN_0^2$ such that $j_1+j_2<n-1$, $\bsm \in \DD_{\bsj}$ and $\bsl \in \BB_{\bsj}$. Then
\begin{align*} 
    \sum_{\bsz \in \cR_{b,n}^{\Sigma}\cap I_{\bsj,\bsm}}&\left((bm_1+k_1-b^{j_1+1}z_1){\rm e}^{\frac{2\pi\ii}{b}k_1\ell_1}-\sum_{r_1=0}^{k_1-1}{\rm e}^{\frac{2\pi\ii}{b}r_1\ell_1}\right) \times \\ & \times\left((bm_2+k_2-b^{j_2+1}z_2){\rm e}^{\frac{2\pi\ii}{b}k_2\ell_2}-\sum_{r_2=0}^{k_2-1}{\rm e}^{\frac{2\pi\ii}{b}r_2\ell_2}\right) \\
		=&\frac{b^{n-j_1-j_2}}{\left({\rm e}^{\frac{2\pi\ii}{b}\ell_1}-1\right)\left({\rm e}^{\frac{2\pi\ii}{b}\ell_2}-1\right)}
		  \pm b^{j_1+j_2-n}\sum_{k_1=0}^{b-1}\sigma^{-1}(k_1){\rm e}^{\frac{2\pi\ii}{b}p(k_1)\ell_1}\sum_{k_2=0}^{b-1}\sigma(k_2){\rm e}^{\frac{2\pi\ii}{b}k_2\ell_2},
\end{align*} where $p(k_1)=k_1$ or $p(k_1)=-k_1-1$ depending on $j_1$ and where the sign depends on $j_2$.
\end{lemma}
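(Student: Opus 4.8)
The plan is to parametrize the points $\bsz \in \cR_{b,n}^{\Sigma} \cap I_{\bsj,\bsm}$ explicitly by their digits and to split the left-hand sum into a product-of-sums, separating the contribution of the two coordinates. Recall that a point of $\cR_{b,n}^{\Sigma}$ has the form $\bsz = (x,y)$ with $x = \sum_{i=1}^n \sigma_i(a_i) b^{i-1-n}$ and $y = \sum_{i=1}^n a_i b^{-i}$. The condition $\bsz \in I_{\bsj,\bsm}$ fixes the first $j_1$ digits of $x$ (equivalently the last $j_1$ of the $\sigma_i(a_i)$, i.e.\ $a_{n-j_1+1},\dots,a_n$, up to applying $\sigma_i^{-1}$ to the digits of $m_1$) and the first $j_2$ digits of $y$ (i.e.\ $a_1,\dots,a_{j_2}$, determined by $m_2$). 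Since $j_1+j_2<n-1$ there is at least one ``free'' index strictly between $j_2+1$ and $n-j_1$; the digit $k_1$ (the $(j_1+1)$-st digit of $x$, read off from the $\sigma_{n-j_1}$ place) and the digit $k_2$ (the $(j_2+1)$-st digit of $y$, i.e.\ $a_{j_2+1}$) are governed by indices $n-j_1$ and $j_2+1$ respectively, and the remaining digits $a_i$ for the indices strictly between these two are completely free, contributing a factor $b$ each. One must track which permutation $\sigma_{n-j_1}$ (either $\sigma$ or $\overline\sigma=\tau\circ\sigma$) acts in the $x$-slot, which accounts for the ``$p(k_1)=k_1$ or $-k_1-1$'' alternative and the overall sign; likewise $j_2$ never touches a permuted coordinate, so the $y$-side always sees $\sigma(k_2)$ with no ambiguity (this is the point where the assumption $\Sigma\in\{\sigma,\overline\sigma\}^n$ is used, and it is why $\sigma^{-1}(k_1)$, not $k_1$, appears).

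Next I would substitute the digit expansions of $z_1$ and $z_2$ into the two bracketed factors of Lemma~\ref{linearpart}(1). Writing $b^{j_1+1}z_1 = (\text{integer part fixed by }\bsm_1,k_1) + b^{j_1+1-n}\cdot(\text{tail})$, the quantity $bm_1+k_1-b^{j_1+1}z_1$ collapses to $-b^{j_1+1-n}$ times the tail of $x$, namely $\varphi$-type expression in the free digits $a_{n-j_1-1},\dots$ — exactly as in the one-dimensional computation in Lemma~\ref{Theo1}. The crucial algebraic step is then, for the sum over the free digits, to use $\sum_{a=0}^{b-1}{\rm e}^{\frac{2\pi\ii}{b}a\ell}=0$ from Lemma~\ref{exponential}(1): when we expand the product of the two brackets and sum over all admissible digit tuples, every term that still contains a truly free digit summed against a nontrivial exponential vanishes, except for the cross terms where the ``$-b^{j_1+1-n}(\text{tail})$'' parts pair up. What survives after summing the free digits is, on one hand, a ``main term'' coming from the fully-averaged tails — this produces the $b^{n-j_1-j_2}/((\mathrm e^{2\pi\ii\ell_1/b}-1)(\mathrm e^{2\pi\ii\ell_2/b}-1))$ piece, by the same telescoping as in the proof of Lemma~\ref{Theo1} where $S_2 = -b\tilde A/(\mathrm e^{2\pi\ii\ell/b}-1)$ appeared — and on the other hand an ``error term'' coming from the $k_1,k_2$ digits at the two boundary levels, which are the only digits not averaged out, giving $\pm b^{j_1+j_2-n}\sum_{k_1}\sigma^{-1}(k_1)\mathrm e^{2\pi\ii p(k_1)\ell_1/b}\sum_{k_2}\sigma(k_2)\mathrm e^{2\pi\ii k_2\ell_2/b}$.

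I expect the main obstacle to be the bookkeeping of the digit-to-level correspondence across the two coordinates simultaneously: in the $x$-coordinate the digit order is reversed relative to $y$, so the level-$j_1$ condition on $x$ constrains the \emph{high}-index digits $a_i$ while the level-$j_2$ condition on $y$ constrains the \emph{low}-index digits, and one must verify carefully that under $j_1+j_2<n-1$ these two constraint blocks are disjoint and leave a nonempty block of genuinely free indices (so the vanishing-sum trick applies in both bracket factors) and that the single ``pivot'' digit on each side — $k_1$ at index $n-j_1$ on the $x$-side and $k_2=a_{j_2+1}$ on the $y$-side — is correctly identified, including through which $\sigma_i$ it is filtered. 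Getting the exact power of $b$ (namely $b^{n-j_1-j_2}$ in the main term, $b^{j_1+j_2-n}$ in the error) and the precise form $\sigma^{-1}(k_1)$ versus $\sigma(k_2)$ right is where sign and inversion errors are most likely; I would pin these down by first doing the degenerate sub-case $\bsm_1=\bsm_2=\bszero$, $k_1=k_2$ arbitrary, and matching against Markhasin's formulas in \cite{Mar2013,Mar2013b} for $\sigma=\mathrm{id}$, then reintroducing general $\sigma$ and general $\bsm$. The rest — expanding the product, summing geometric series, invoking Lemma~\ref{exponential}(1) — is routine and parallels Lemma~\ref{Theo1} closely enough that it can be compressed.
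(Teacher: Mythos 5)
Your overall strategy is essentially the paper's: the paper defers the digit-by-digit expansion and the vanishing-sum argument to Markhasin's computation and then only carries out the case analysis on which of $\sigma,\overline{\sigma}$ occupies the two pivot positions. The structural points you identify --- disjointness of the two constraint blocks, the pivot digits at indices $n-j_1$ and $j_2+1$, the main term from the averaged tails and the error term from the single surviving cross term --- are all correct.

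There is, however, a concrete error in your permutation bookkeeping, at exactly the step you flagged as delicate. You claim the $y$-side ``never touches a permuted coordinate, so [it] always sees $\sigma(k_2)$ with no ambiguity'', and you attribute both the $p(k_1)$ alternative and the overall $\pm$ sign to $\sigma_{n-j_1}$. This is backwards. The surviving cross term pairs the exponential ${\rm e}^{\frac{2\pi\ii}{b}k_2\ell_2}$ from the $y$-bracket with the digit of the \emph{first} coordinate at level $n-j_2$, which is $\sigma_{j_2+1}(a_{j_2+1})=\sigma_{j_2+1}(k_2)$; symmetrically, ${\rm e}^{\frac{2\pi\ii}{b}k_1\ell_1}$ is paired with the unpermuted digit $a_{n-j_1}=\sigma_{n-j_1}^{-1}(k_1)$ of the second coordinate. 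Hence the $k_2$-sum is $\sum_{k_2}\sigma_{j_2+1}(k_2){\rm e}^{\frac{2\pi\ii}{b}k_2\ell_2}$, and when $\sigma_{j_2+1}=\overline{\sigma}$ the identity $\overline{\sigma}(k)=b-1-\sigma(k)$ together with $\sum_{k}{\rm e}^{\frac{2\pi\ii}{b}k\ell_2}=0$ flips the sign: this is the source of the $\pm$, and it depends on $j_2$, as the statement asserts. The case $\sigma_{n-j_1}=\overline{\sigma}$, by contrast, is absorbed by the substitution $k_1\mapsto b-1-k_1$ in the exponent, producing $p(k_1)=-k_1-1$ with \emph{no} sign change. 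As written, your argument would place the sign on the wrong factor and yield a formula whose sign depends on $j_1$, contradicting the statement. A smaller slip: $j_1+j_2<n-1$ does not guarantee a free index strictly between $j_2+1$ and $n-j_1$ (take $j_1+j_2=n-2$); it only guarantees that the two pivot indices are distinct, which is what the argument actually needs.
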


\begin{proof}
   With the very same argumentation as in the proof of \cite[Lemma 4.4]{Mar2013} or \cite[Lemma 4.10]{Mar2013b}, we can show that
	\begin{align*} 
    \sum_{\bsz \in \cR_{b,n}^{\Sigma}\cap I_{\bsj,\bsm}}&\left((bm_1+k_1-b^{j_1+1}z_1){\rm e}^{\frac{2\pi\ii}{b}k_1\ell_1}-\sum_{r_1=0}^{k_1-1}{\rm e}^{\frac{2\pi\ii}{b}r_1\ell_1}\right) \times \\ & \times\left((bm_2+k_2-b^{j_2+1}z_2){\rm e}^{\frac{2\pi\ii}{b}k_2\ell_2}-\sum_{r_2=0}^{k_2-1}{\rm e}^{\frac{2\pi\ii}{b}r_2\ell_2}\right) \\
		=&\frac{b^{n-j_1-j_2}}{\left({\rm e}^{\frac{2\pi\ii}{b}\ell_1}-1\right)\left({\rm e}^{\frac{2\pi\ii}{b}\ell_2}-1\right)}\\
		&+\underbrace{b^{j_1+j_2-n}\sum_{k_1=0}^{b-1}a_{n-j_1}{\rm e}^{\frac{2\pi\ii}{b}k_1\ell_1}\sum_{k_2=0}^{b-1}\sigma_{j_2+1}(a_{j_2+1}){\rm e}^{\frac{2\pi\ii}{b}k_2\ell_2}}_{S},
 \end{align*}
where $\sigma_{n-j_1}(a_{n-j_1})=k_1$ and $a_{j_2+1}=k_2$. We analyse the expression $S$. We have
$$   S= b^{j_1+j_2-n}\underbrace{\sum_{k_1=0}^{b-1}\sigma_{n-j_1}^{-1}(k_1){\rm e}^{\frac{2\pi\ii}{b}k_1\ell_1}}_{S_1}\underbrace{\sum_{k_2=0}^{b-1}\sigma_{j_2+1}(k_2){\rm e}^{\frac{2\pi\ii}{b}k_2\ell_2}}_{S_2}.
$$
We have to distinguish the cases $\sigma_{n-j_1}=\sigma$ and $\sigma_{n-j_1}=\overline{\sigma}$ as well as the cases $\sigma_{j_2+1}=\sigma$ and $\sigma_{j_2+1}=\overline{\sigma}$, respectively. The case $\sigma_{n-j_1}=\sigma$ leads to 
$S_1=\sum_{k_1=0}^{b-1}\sigma^{-1}(k_1){\rm e}^{\frac{2\pi\ii}{b}k_1\ell_1}$, whereas $\sigma_{n-j_1}=\overline{\sigma}$ yields
$$S_1=\sum_{k_1=0}^{b-1}\sigma^{-1}(b-1-k_1){\rm e}^{\frac{2\pi\ii}{b}k_1\ell_1}=\sum_{k_1=0}^{b-1}\sigma^{-1}(k_1){\rm e}^{\frac{2\pi\ii}{b}(b-1-k_1)\ell_1}=\sum_{k_1=0}^{b-1}\sigma^{-1}(k_1){\rm e}^{\frac{2\pi\ii}{b}(-1-k_1)\ell_1}.$$
Combining these results, we have $S_1=\sum_{k_1=0}^{b-1}\sigma^{-1}(k_1){\rm e}^{\frac{2\pi\ii}{b}p(k_1)\ell_1}$, where 
$p(k_1)=k_1$ if $\sigma_{n-j_1}=\sigma$ or $p(k_1)=-k_1-1$ if $\sigma_{n-j_1}=\overline{\sigma}$. Hence, $p(k_1)$ depends only on $j_1$. 
The case $\sigma_{j_2+1}=\sigma$ yields $S_2=\sum_{k_2=0}^{b-1}\sigma(k_2){\rm e}^{\frac{2\pi\ii}{b}k_2\ell_2}$, whereas
$\sigma_{j_2+1}=\overline{\sigma}$ leads to $$S_2=\sum_{k_2=0}^{b-1}(b-1-\sigma(k_2)){\rm e}^{\frac{2\pi\ii}{b}k_2\ell_2}=-\sum_{k_2=0}^{b-1}\sigma(k_2){\rm e}^{\frac{2\pi\ii}{b}k_2\ell_2},$$
and therefore we have $S_2=\pm \sum_{k_2=0}^{b-1}\sigma(k_2){\rm e}^{\frac{2\pi\ii}{b}k_2\ell_2}$, where the sign depends only on $j_2$. The proof is complete.
\end{proof}

\begin{lemma} \label{j1} Let $\bsj=(j_1,-1)$ such that $j_1 \in \NN_0$ with $j_1<n-1$, $\bsm=(m_1,0)$ with $m_1 \in \DD_{j_1}$ and $\bsl=(\ell_1,1)$ with $\ell_1 \in \BB_{j_1}$. Then
   \begin{align*} \sum_{\bsz \in \cR_{b,n}^{\Sigma}\cap I_{\bsj,\bsm}}&\left((bm_1+k_1-b^{j_1+1}z_1){\rm e}^{\frac{2\pi\ii}{b}k_1\ell_1}-\sum_{r_1=0}^{k_1-1}{\rm e}^{\frac{2\pi\ii}{b}r_1\ell_1}\right)(1-z_2) \\ =& \frac{b^{n-j_1}(1-2\varepsilon)+b}{2\left({\rm e}^{\frac{2\pi\ii}{b}\ell_1}-1\right)}+\frac{b^{-1}-b^{j_1-n}}{2}\sum_{k_1=0}^{b-1}\sigma^{-1}(k_1){\rm e}^{\frac{2\pi\ii}{b}p(k_1)\ell_1} \\
	&+\frac{b^{-1}}{{\rm e}^{\frac{2\pi\ii}{b}\ell_1}-1}\left(\sum_{k_1=0}^{b-1}\sigma^{-1}(k_1){\rm e}^{\frac{2\pi\ii}{b}p(k_1)\ell_1}-\frac{b(b-1)}{2}\right),
	\end{align*}
	where $\varepsilon$ is a positive real number depending on $j_1$ and $m_1$ which satisfies $\varepsilon b^{n-j_1} \leq b$ and where $p(k_1)=k_1$ or $p(k_1)=-k_1-1$ depending on $j_1$.
\end{lemma}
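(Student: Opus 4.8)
The plan is to follow the pattern of the proof of Lemma~\ref{j1j2} and of the corresponding lemmas in \cite{Mar2013, Mar2013b}, the essential new point being that the second coordinate index now equals $-1$, so the factor carried by the second component of $h_{\bsj,\bsm,\bsl}$ is $1-z_2$, which depends on \emph{all} $b$-adic digits of the point and not just one of them. First I would invoke Lemma~\ref{linearpart} in the case $\bsj=(j_1,-1)$ for the indicator function: it shows that the summand in the statement is $b^{j_1+1}$ times the Haar coefficient $\mu_{\bsj,\bsm,\bsl}$ of $\bsone_{[\bszero,\cdot)}(\bsz)$, so only points $\bsz=(z_1,z_2)\in\cR_{b,n}^{\Sigma}$ with $z_1\in I_{j_1,m_1}$ contribute, and a point with $z_1=m_1/b^{j_1}$ gives a vanishing summand, so one may sum over the closed interval.

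Next I would parametrise the contributing points. Writing a point of $\cR_{b,n}^{\Sigma}$ as $(x^{(1)},x^{(2)})$ with $x^{(1)}=\sum_{i=1}^{n}\sigma_i(a_i)b^{-(n+1-i)}$ and $x^{(2)}=\sum_{i=1}^{n}a_i b^{-i}$, the condition $z_1\in I_{j_1,m_1}$ fixes the digits $\sigma_i(a_i)$ for $i\in\{n-j_1+1,\dots,n\}$ in terms of $m_1$, the digit $k_1:=\sigma_{n-j_1}(a_{n-j_1})$ determines which $I_{j_1,m_1}^{k_1}$ contains $z_1$ (hence fixes $a_{n-j_1}$ once $k_1$ is chosen), and the digits $a_1,\dots,a_{n-j_1-1}$ stay free, which is where $j_1<n-1$ enters. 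For such a point one computes
$$bm_1+k_1-b^{j_1+1}z_1=-\sum_{i=1}^{n-j_1-1}\frac{\sigma_i(a_i)}{b^{n-j_1-i}},\qquad 1-z_2=(1-\varepsilon)-\frac{a_{n-j_1}}{b^{n-j_1}}-\sum_{i=1}^{n-j_1-1}\frac{a_i}{b^i},$$
where $\varepsilon:=\sum_{i=n-j_1+1}^{n}a_i b^{-i}$ depends only on $j_1$ and $m_1$ (for fixed $\Sigma$) and satisfies $0\le\varepsilon b^{n-j_1}\le b$, and where $a_{n-j_1}=\sigma^{-1}(k_1)$ if $\sigma_{n-j_1}=\sigma$, while $a_{n-j_1}=\sigma^{-1}(b-1-k_1)$ if $\sigma_{n-j_1}=\overline{\sigma}$.

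I would then substitute, expand the product of the two bracketed factors, and sum first over the free digits $a_1,\dots,a_{n-j_1-1}$ and then over $k_1\in\{0,\dots,b-1\}$. Each sum over a free digit $a_i$ gives either a factor $b$ or $\sum_{a=0}^{b-1}\sigma_i(a)=b(b-1)/2$ (the same value for $\sigma_i=\sigma$ and $\sigma_i=\overline\sigma$), reducing the free digits to geometric sums in $b$. Crucially, the term arising from the product of the two inner digit sums carries an isolated factor ${\rm e}^{\frac{2\pi\ii}{b}k_1\ell_1}$ and no other dependence on $k_1$, so it is annihilated by the summation over $k_1$ (first identity of Lemma~\ref{exponential}); in particular the quantities $\sum_{a=0}^{b-1}\sigma_i(a)a$, which would otherwise depend on whether $\sigma_i=\sigma$ or $\overline\sigma$, never enter. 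What remains reduces to $\sum_{k_1=0}^{b-1}a_{n-j_1}{\rm e}^{\frac{2\pi\ii}{b}k_1\ell_1}$ and $\sum_{k_1=0}^{b-1}\sum_{r_1=0}^{k_1-1}{\rm e}^{\frac{2\pi\ii}{b}r_1\ell_1}=-b({\rm e}^{\frac{2\pi\ii}{b}\ell_1}-1)^{-1}$; in the case $\sigma_{n-j_1}=\overline\sigma$ the substitution $k_1\mapsto b-1-k_1$ turns the first of these into $\sum_{k_1=0}^{b-1}\sigma^{-1}(k_1){\rm e}^{\frac{2\pi\ii}{b}(-1-k_1)\ell_1}$, which is what produces the shift $p(k_1)\in\{k_1,-k_1-1\}$ depending on $j_1$. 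Collecting the three surviving contributions — $1-\varepsilon$ paired with $-\sum_{r_1}{\rm e}^{\frac{2\pi\ii}{b}r_1\ell_1}$ giving the first summand of the claim, the first digit sum paired with $1-\varepsilon$ giving the second, and $a_{n-j_1}$ paired with $-\sum_{r_1}{\rm e}^{\frac{2\pi\ii}{b}r_1\ell_1}$ giving the third — one obtains the asserted identity.

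The main obstacle is the bookkeeping of this last step: splitting $1-z_2$ cleanly into its constant part $1-\varepsilon$ and its $a_{n-j_1}$-part, organising the expansion without sign or index slips, and — most delicately — tracking how the two possibilities $\sigma_{n-j_1}\in\{\sigma,\overline\sigma\}$ propagate into the shift $p(k_1)$. The single real simplification, just as in the $\bsj\in\NN_0^2$ case treated in Lemma~\ref{j1j2}, is that the bilinear digit term is killed by $\sum_{k=0}^{b-1}{\rm e}^{\frac{2\pi\ii}{b}k\ell_1}=0$; this is what makes the computation close up.
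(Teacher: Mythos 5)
Your argument is correct and takes essentially the same route as the paper, which defers the structural decomposition to Markhasin's Lemmas 4.10/4.17 and then evaluates the surviving digit sums $T_1=\sum_{k_1}a_{n-j_1}{\rm e}^{\frac{2\pi\ii}{b}k_1\ell_1}(\cdots)$ and $T_2$ exactly as you do, with the same case distinction $\sigma_{n-j_1}\in\{\sigma,\overline{\sigma}\}$ producing $p(k_1)\in\{k_1,-k_1-1\}$; your $\varepsilon=\sum_{i=n-j_1+1}^{n}a_ib^{-i}$ is indeed the quantity that makes the first term come out as $\frac{b^{n-j_1}(1-2\varepsilon)+b}{2({\rm e}^{2\pi\ii\ell_1/b}-1)}$. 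One slip in your final bookkeeping sentence: the second summand comes from pairing the first-coordinate digit sum with the $a_{n-j_1}$-part of $1-z_2$, not with $1-\varepsilon$ (that pairing is annihilated by $\sum_{k_1=0}^{b-1}{\rm e}^{\frac{2\pi\ii}{b}k_1\ell_1}=0$), which is also why the coefficient $\frac{b^{-1}-b^{j_1-n}}{2}=-b^{j_1-n}\sum_{\bsa}P$ appears there.
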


 \begin{proof}
 With the very same argumentation as in the proof of \cite[Lemma 4.10]{Mar2013} or \cite[Lemma 4.17]{Mar2013b}, we can show that
\begin{align*} \sum_{\bsz \in \cR_{b,n}^{\Sigma}\cap I_{\bsj,\bsm}}&\left((bm_1+k_1-b^{j_1+1}z_1){\rm e}^{\frac{2\pi\ii}{b}k_1\ell_1}-\sum_{r_1=0}^{k_1-1}{\rm e}^{\frac{2\pi\ii}{b}r_1\ell_1}\right)(1-z_2) \\
	 =& \frac{b^{n-j_1}(1-2\varepsilon)+b}{2\left({\rm e}^{\frac{2\pi\ii}{b}\ell_1}-1\right)}+\underbrace{\sum_{h=1}^{b^{n-j_1-1}}hb^{j_1-n+1}b^{j_1-n}\sum_{k_1=0}^{b-1}a_{n-j_1}{\rm e}^{\frac{2\pi\ii}{b}k_1\ell_1}}_{T_1} \\
	&+ \underbrace{\sum_{h=0}^{b^{n-j_1-1}-1}b^{j_1-n}\sum_{k_1=0}^{b-1}a_{n-j_1}\sum_{r_1=0}^{k_1-1}{\rm e}^{\frac{2\pi\ii}{b}r_1\ell_1}}_{T_2},
\end{align*}
where $\sigma_{n-j_1}(a_{n-j_1})=k_1$. Analogously as in the proof of Lemma~\ref{j1j2}, we find
$$T_1=\frac{b^{-1}-b^{j_1-n}}{2}\sum_{k_1=0}^{b-1}\sigma^{-1}(k_1){\rm e}^{\frac{2\pi\ii}{b}p(k_1)\ell_1},$$
where the value of $p(k_1)$ depends only on $j_1$. We also obtain
\begin{align*}
   T_2=& \frac{1}{{\rm e}^{\frac{2\pi\ii}{b}\ell_1}-1}b^{n-j_1-1}b^{j_1-n}\sum_{k_1=0}^{b-1}a_{n-j_1}\left({\rm e}^{\frac{2\pi\ii}{b}k_1\ell_1}-1\right) \\
	  =& \frac{b^{-1}}{{\rm e}^{\frac{2\pi\ii}{b}\ell_1}-1}\left(\sum_{k_1=0}^{b-1}a_{n-j_1}{\rm e}^{\frac{2\pi\ii}{b}k_1\ell_1}-\frac{b(b-1)}{2}\right) \\
		=& \frac{b^{-1}}{{\rm e}^{\frac{2\pi\ii}{b}\ell_1}-1}\left(\sum_{k_1=0}^{b-1}\sigma^{-1}(k_1){\rm e}^{\frac{2\pi\ii}{b}p(k_1)\ell_1}-\frac{b(b-1)}{2}\right).
\end{align*}
The proof is complete.
\end{proof}

\begin{lemma} \label{j2} Let $\bsj=(-1,j_2)$ such that $j_2 \in \NN_0$ with $j_2<n-1$, $\bsm=(0,m_2)$ with $m_2 \in \DD_{j_2}$ and $\bsl=(1,\ell_2)$ with $\ell_2 \in \BB_{j_2}$. Then
   \begin{align*} \sum_{\bsz \in \cR_{b,n}^{\Sigma}\cap I_{\bsj,\bsm}}&(1-z_1)\left((bm_2+k_2-b^{j_2+1}z_2){\rm e}^{\frac{2\pi\ii}{b}k_2\ell_2}-\sum_{r_2=0}^{k_2-1}{\rm e}^{\frac{2\pi\ii}{b}r_2\ell_2}\right) \\ =& \frac{b^{n-j_2}(1-2\varepsilon)+b}{2\left({\rm e}^{\frac{2\pi\ii}{b}\ell_2}-1\right)}\pm\frac{b^{-1}-b^{j_2-n}}{2}\sum_{k_2=0}^{b-1}\sigma(k_2){\rm e}^{\frac{2\pi\ii}{b}k_2\ell_2} \\
	&+\frac{b^{-1}}{{\rm e}^{\frac{2\pi\ii}{b}\ell_2}-1}\left(\pm\sum_{k_2=0}^{b-1}\sigma(k_2){\rm e}^{\frac{2\pi\ii}{b}k_2\ell_2}-\frac{b(b-1)}{2}\right),
	\end{align*}
		where $\varepsilon'$ is a positive real number depending on $j_2$ and $m_2$ which satisfies $\varepsilon' b^{n-j_2}\leq b$ and where the signs depend only on $j_2$.
\end{lemma}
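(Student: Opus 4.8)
The plan is to prove Lemma~\ref{j2} in complete analogy to Lemma~\ref{j1}, with the roles of the first and the second coordinate interchanged; the statement is the exact counterpart, for a general permutation $\sigma\in\mathfrak{S}_b$, of \cite[Lemma 4.10]{Mar2013} and \cite[Lemma 4.17]{Mar2013b}. First I would parametrise a point $\bsz=(z_1,z_2)\in\cR_{b,n}^{\Sigma}$ by its digits $a_1,\dots,a_n$, so that $z_1=\sum_{i=1}^{n}\sigma_i(a_i)b^{-(n+1-i)}$ and $z_2=\sum_{i=1}^{n}a_ib^{-i}$. Since $\bsj=(-1,j_2)$ and $\bsm=(0,m_2)$, the condition $\bsz\in I_{\bsj,\bsm}$ prescribes only the leading digits $a_1,\dots,a_{j_2}$ of $z_2$ (they are determined by $m_2$), while $z_1$ runs over all of $[0,1)$; the digit $k_2=a_{j_2+1}$ is the one that governs which subinterval $I_{j_2,m_2}^{k_2}$ contains $z_2$. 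By the third formula for the counting part $g$ in Lemma~\ref{linearpart} the factor $1-z_1$ then appears, and the summand in the assertion is exactly the one we have to sum over $\bsz\in\cR_{b,n}^{\Sigma}\cap I_{\bsj,\bsm}$. I would then write $bm_2+k_2-b^{j_2+1}z_2=-\sum_{i=j_2+2}^{n}a_ib^{-(i-j_2-1)}$ and $1-z_1=1-\sum_{i=1}^{n}\sigma_i(a_i)b^{-(n+1-i)}$ and carry out the summation over the free digits $a_{j_2+1},a_{j_2+2},\dots,a_n$.

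Exactly as in the proof of Lemma~\ref{j1}, item~1 of Lemma~\ref{exponential} makes almost every term of the resulting product vanish once one sums over a single free digit, and three contributions survive: a main term equal to $\frac{b^{n-j_2}(1-2\varepsilon')+b}{2({\rm e}^{\frac{2\pi\ii}{b}\ell_2}-1)}$, where $\varepsilon'$ collects the nonnegative contribution of the prescribed leading digits $a_1,\dots,a_{j_2}$ of $z_2$, hence depends only on $j_2$ and $m_2$ and satisfies $\varepsilon'b^{n-j_2}\leq b$; a term $T_1$ coming from the ``diagonal'' digit $a_{j_2+1}$ in $1-z_1$ together with the summation over $h$ (as in the $T_1$ of Lemma~\ref{j1}); and a term $T_2$ stemming from the correction sum $-\sum_{r_2=0}^{k_2-1}{\rm e}^{\frac{2\pi\ii}{b}r_2\ell_2}$ in the Haar coefficient. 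For $T_2$ I would use $\sum_{r_2=0}^{k_2-1}{\rm e}^{\frac{2\pi\ii}{b}r_2\ell_2}=({\rm e}^{\frac{2\pi\ii}{b}k_2\ell_2}-1)/({\rm e}^{\frac{2\pi\ii}{b}\ell_2}-1)$ together with $\sum_{k_2=0}^{b-1}\sigma_{j_2+1}(k_2)=\sum_{a=0}^{b-1}a=\frac{b(b-1)}{2}$, which is what produces the term $-\frac{b(b-1)}{2}$.

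The essential difference from Lemma~\ref{j1j2} and Lemma~\ref{j1} is where the permutation enters. Here the Haar structure sits on the un-permuted second coordinate, so $k_2=a_{j_2+1}$ directly, and $\sigma$ only shows up through the factor $1-z_1$: in $z_1=\sum_i\sigma_i(a_i)b^{-(n+1-i)}$ the digit $a_{j_2+1}$ occurs as $\sigma_{j_2+1}(a_{j_2+1})=\sigma_{j_2+1}(k_2)$, so both $T_1$ and $T_2$ contain the sum $\sum_{k_2=0}^{b-1}\sigma_{j_2+1}(k_2){\rm e}^{\frac{2\pi\ii}{b}k_2\ell_2}$. Because $\sigma_{j_2+1}\in\{\sigma,\overline{\sigma}\}$ and $\overline{\sigma}(k_2)=b-1-\sigma(k_2)$, while $\sum_{k_2=0}^{b-1}{\rm e}^{\frac{2\pi\ii}{b}k_2\ell_2}=0$ by Lemma~\ref{exponential}, this sum equals $\pm\sum_{k_2=0}^{b-1}\sigma(k_2){\rm e}^{\frac{2\pi\ii}{b}k_2\ell_2}$ with a sign depending only on $j_2$ (namely on whether $\sigma_{j_2+1}=\sigma$ or $\sigma_{j_2+1}=\overline{\sigma}$). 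This explains why $\sigma$ itself, not $\sigma^{-1}$, appears, and why we obtain an overall sign rather than a shift $p(\cdot)$ inside the exponent: those features arose in Lemma~\ref{j1j2}/Lemma~\ref{j1} precisely because there the relevant digit of the \emph{permuted} first coordinate was $\sigma_{n-j_1}^{-1}(k_1)$. Collecting the main term, $T_1$ and $T_2$ then yields the claimed identity; the quantity written $\varepsilon$ in the displayed formula is the one denoted $\varepsilon'$ in the statement.

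I expect the only real work to be routine bookkeeping: keeping track of $\varepsilon'$ and checking $\varepsilon'b^{n-j_2}\leq b$, for which one uses the hypothesis $j_2<n-1$ to guarantee that the block of prescribed leading digits and the block of free tail digits of the two coordinates do not overlap; and following the signs through the various exponential-sum identities of Lemma~\ref{exponential} in the $T_1$- and $T_2$-parts. No idea beyond those already used in Lemma~\ref{j1} is needed.
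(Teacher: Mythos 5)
Your proposal is correct and follows essentially the same route as the paper, which likewise obtains the result by transposing the argument of Lemma~\ref{j1} (and of the cited lemmas of Markhasin) to the second coordinate, using $a_{j_2+1}=k_2$ and the fact that the permutation now enters only through the factor $1-z_1$ via $\sigma_{j_2+1}(a_{j_2+1})$, whence the overall sign $\pm$ in place of the exponent shift $p(\cdot)$. Your reading is in fact the intended one: the paper's displayed intermediate formula writes $a_{j_2+1}$ in the first correction sum where $\sigma_{j_2+1}(a_{j_2+1})$ is meant (as the final statement with $\sigma(k_2)$ confirms), and you also correctly note that the $\varepsilon$ in the display is the $\varepsilon'$ of the statement.
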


\begin{proof}
   This fact follows from
	\begin{align*} \sum_{\bsz \in \cR_{b,n}^{\Sigma}\cap I_{\bsj,\bsm}}& (1-z_1)\left((bm_2+k_2-b^{j_2+1}z_2){\rm e}^{\frac{2\pi\ii}{b}k_2\ell_2}-\sum_{r_2=0}^{k_2-1}{\rm e}^{\frac{2\pi\ii}{b}r_2\ell_2}\right) \\
	 =& \frac{b^{n-j_2}(1-2\varepsilon')+b}{2\left({\rm e}^{\frac{2\pi\ii}{b}\ell_2}-1\right)}+\sum_{h=1}^{b^{n-j_2-1}}hb^{j_2-n+1}b^{j_2-n}\sum_{k_2=0}^{b-1}a_{j_2+1}{\rm e}^{\frac{2\pi\ii}{b}k_2\ell_2} \\
	&+\sum_{h=0}^{b^{n-j_2-1}-1}b^{j_2-n}\sum_{k_2=0}^{b-1}\sigma_{j_2+1}(a_{j_2+1})\sum_{r_2=0}^{k_2-1}{\rm e}^{\frac{2\pi\ii}{b}k_2\ell_2}
\end{align*} and the relation $a_{j_2+1}=k_2$. The argumentation is very similar to the proofs of \cite[Lemma 4.10]{Mar2013}, \cite[Lemma 4.17]{Mar2013b},  Lemma~\ref{j1j2} and Lemma~\ref{j1}.
\end{proof}

\begin{lemma}\label{summary} Let $\bsj\in \NN_{-1}^2$, $\bsm \in \DD_{\bsj}$, $\bsl \in \BB_{\bsj}$ and $\mu_{\bsj,\bsm,\bsl}$ be the $b$-adic Haar coefficients of the local discrepancy of $\cR_{b,n}^{\Sigma}$. We recall the definition $|\bsj|=\max\{0,j_1\}+\max\{0,j_2\}$. Then
\begin{enumerate}
   \item if $\bsj \in \NN_0^2$ and $|\bsj|<n-1$, then
	   $$ |\mu_{\bsj,\bsm,\bsl}|\leq \left(\frac{b-1}{2}\right)^2b^{-2n}, $$
		\item if $\bsj \in \NN_0^2$, $|\bsj|\geq n-1$ and $j_1,j_2 \leq n$, then $|\mu_{\bsj,\bsm,\bsl}|\leq cb^{-n-|\bsj|}$ for some constant $c>0$ and
		$$ |\mu_{\bsj,\bsm,\bsl}|=\frac{b^{-2|\bsj|-2}}{\left|{\rm e}^{\frac{2\pi\ii}{b}\ell_1}-1\right|\left|{\rm e}^{\frac{2\pi\ii}{b}\ell_2}-1\right|} $$
		for all but $b^n$ coefficients $\mu_{\bsj,\bsm,\bsl}$,
		\item if $\bsj \in \NN_0^2$ and $j_1\geq n$ or $j_2\geq n$, then
		$$ |\mu_{\bsj,\bsm,\bsl}|= \frac{b^{-2|\bsj|-2}}{\left|{\rm e}^{\frac{2\pi\ii}{b}\ell_1}-1\right|\left|{\rm e}^{\frac{2\pi\ii}{b}\ell_2}-1\right|},$$
		\item if $\bsj=(j_1,-1)$ or $\bsj=(-1,j_2)$ with $j_1\in \NN_0$, $j_1<n$ or $j_1\in \NN_0$, $j_2<n$          respectively, then we have $$ |\mu_{\bsj,\bsm,\bsl}|\leq (b^2-1)b^{-n-j_i} $$
		for $i=1$ and $i=2$, respectively,
		\item if $\bsj=(j_1,-1)$ or $\bsj=(-1,j_2)$ with $j_1\in \NN_0$, $j_1\geq n$ or $j_2\geq n$          respectively, then we have $$ |\mu_{\bsj,\bsm,\bsl}|=\frac{1}{2}\frac{b^{-2j_i-1}}{\left|{\rm e}^{\frac{2\pi\ii}{b}\ell_i}-1\right|} $$ for $i=1$ and $i=2$, respectively.
\end{enumerate}
\end{lemma}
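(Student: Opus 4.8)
The plan is to exploit the linearity of the Haar coefficients together with the explicit formulas of Lemma~\ref{linearpart}. Writing the local discrepancy as $D_N(\cR_{b,n}^{\Sigma},\cdot)=g-f$ with volume part $f(\bst)=t_1t_2$ and counting part $g(\bst)=\frac1N\sum_{r=0}^{N-1}\bsone_{[\bszero,\bst)}(\bsx_r)$, we have $\mu_{\bsj,\bsm,\bsl}=\mu_{\bsj,\bsm,\bsl}(g)-\mu_{\bsj,\bsm,\bsl}(f)$ and $\mu_{\bsj,\bsm,\bsl}(g)=\frac1N\sum_{\bsz\in\cR_{b,n}^{\Sigma}}\mu_{\bsj,\bsm,\bsl}(\bsone_{[\bszero,\cdot)}(\bsz))$. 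By Lemma~\ref{linearpart}, these summands vanish unless $\bsz$ lies in the interior of $I_{\bsj,\bsm}$, and otherwise equal $b^{-j_1-j_2-2}$ (when $\bsj\in\NN_0^2$), resp.\ $b^{-j_i-1}(1-z_{3-i})$ (when one component of $\bsj$ is $-1$), times one or two of the bracket expressions occurring in Lemmas~\ref{j1j2}, \ref{j1}, \ref{j2}. Hence, up to these prefactors and the factor $1/N=b^{-n}$, the coefficient $\mu_{\bsj,\bsm,\bsl}(g)$ is precisely one of the sums evaluated in Lemmas~\ref{j1j2}, \ref{j1}, \ref{j2}; points on the boundary of $I_{\bsj,\bsm}$ are harmless, since they make the corresponding bracket vanish. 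The proof then splits into the five regimes of the statement.

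In the small-index regimes — item~1 and the subcases $j_i<n-1$ of item~4 — I would substitute the evaluations of Lemmas~\ref{j1j2}, \ref{j1}, \ref{j2}. The key is a cancellation: the leading term of each evaluation (the one carrying the factor $b^{n-j_1-j_2}$, resp.\ $b^{n-j_i}$, over a multiple of the product of the factors ${\rm e}^{\frac{2\pi\ii}{b}\ell_\nu}-1$) reproduces, after multiplication by $b^{-n}$ and the prefactor, exactly the quantity $\mu_{\bsj,\bsm,\bsl}(f)$ of Lemma~\ref{linearpart}, so in $\mu_{\bsj,\bsm,\bsl}=\mu(g)-\mu(f)$ the volume part drops out and only lower-order terms survive. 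For item~1 these are $\pm b^{-2n-2}S_1S_2$ with $S_1=\sum_{k=0}^{b-1}\sigma^{-1}(k){\rm e}^{\frac{2\pi\ii}{b}p(k)\ell_1}$ and $S_2=\sum_{k=0}^{b-1}\sigma(k){\rm e}^{\frac{2\pi\ii}{b}k\ell_2}$; since $\sigma$ and $\sigma^{-1}$ are permutations of $\{0,\dots,b-1\}$, the triangle inequality gives $|S_1|,|S_2|\le\sum_{k=0}^{b-1}k=\frac{b(b-1)}2$, whence $|\mu_{\bsj,\bsm,\bsl}|\le b^{-2n-2}\big(\frac{b(b-1)}2\big)^2=\big(\frac{b-1}2\big)^2 b^{-2n}$, the claimed bound. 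In the $(j_1,-1)$ and $(-1,j_2)$ subcases of item~4 the surviving terms additionally involve the parameter $\varepsilon$ (resp.\ $\varepsilon'$) of Lemmas~\ref{j1}, \ref{j2}, which is controlled via $\varepsilon b^{n-j_1}\le b$, and are estimated using $\frac1{|{\rm e}^{\frac{2\pi\ii}{b}\ell_1}-1|^2}\le\frac{b^2-1}{12}$ and $\big|\sum_{r=0}^{k-1}{\rm e}^{\frac{2\pi\ii}{b}r\ell_1}\big|\le\frac2{|{\rm e}^{\frac{2\pi\ii}{b}\ell_1}-1|}$ from Lemma~\ref{exponential}; collecting the (deliberately generous) constants yields $|\mu_{\bsj,\bsm,\bsl}|\le(b^2-1)b^{-n-j_i}$.

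For the large-index regimes — items~3 and~5, the empty-box part of item~2, and the boundary subcases $|\bsj|=n-1$ of item~2 and $j_i=n-1$ of item~4 — I would argue directly from the distribution of $\cR_{b,n}^{\Sigma}$. Every coordinate of every point of $\cR_{b,n}^{\Sigma}$ is an integer multiple of $b^{-n}$, so the interior of a one-dimensional $b$-adic interval of level $\ge n$ contains no point coordinate; hence, if $j_1\ge n$ or $j_2\ge n$, Lemma~\ref{linearpart} forces $\mu_{\bsj,\bsm,\bsl}(g)=0$, so $\mu_{\bsj,\bsm,\bsl}=-\mu_{\bsj,\bsm,\bsl}(f)$, which is exactly the value stated in items~3 and~5. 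When $\bsj\in\NN_0^2$, $|\bsj|\ge n$ and $j_1,j_2\le n$, the $(0,n,2)$-net property of $\cR_{b,n}^{\Sigma}$ shows that $I_{\bsj,\bsm}$ contains at most one point: if none, again $\mu_{\bsj,\bsm,\bsl}=-\mu_{\bsj,\bsm,\bsl}(f)$ (the exact value), and if one interior point, the single bracket product is bounded by a constant times $1/(|{\rm e}^{\frac{2\pi\ii}{b}\ell_1}-1|\,|{\rm e}^{\frac{2\pi\ii}{b}\ell_2}-1|)$, giving $|\mu_{\bsj,\bsm,\bsl}|\le c\,b^{-n-|\bsj|}$; since for a fixed level $\bsj$ the boxes $I_{\bsj,\bsm}$ partition the cube, at most $b^n$ of them meet $\cR_{b,n}^{\Sigma}$ in their interior, which accounts for the "all but $b^n$'' clause. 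The cases $|\bsj|=n-1$ and $j_i=n-1$ are handled the same way, with "at most one point'' replaced by "at most $b$ points'' (again by the net property) and the crude bracket bound used pointwise.

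The step I expect to be the main obstacle is precisely this treatment of the boundary levels $|\bsj|=n-1$ and $j_i=n-1$: they fall just outside the hypotheses of Lemmas~\ref{j1j2}, \ref{j1}, \ref{j2}, so the clean cancellation of the volume part is unavailable and one must fall back on pointwise estimates for the bracket expressions and then verify that these still fit the exponents $b^{-n-|\bsj|}$, resp.\ $b^{-n-j_i}$ — which works because $|\bsj|=n-1$ forces $b^{-2|\bsj|-2}=b^{-1}b^{-n-|\bsj|}$, so even $\mu_{\bsj,\bsm,\bsl}(f)$ is already of the right order. A subordinate technicality is the bookkeeping behind item~2, namely confirming that for a fixed level $\bsj$ with $|\bsj|\ge n-1$ at most $b^n$ of the boxes $I_{\bsj,\bsm}$ meet $\cR_{b,n}^{\Sigma}$ in their interior, so that all remaining Haar coefficients equal the stated exact value.
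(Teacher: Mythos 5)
Your proposal is correct and follows essentially the same route as the paper: decompose the local discrepancy into counting and volume parts via Lemma~\ref{linearpart}, cancel the leading terms of the sums in Lemmas~\ref{j1j2}, \ref{j1}, \ref{j2} against $\mu_{\bsj,\bsm,\bsl}(f)$, and bound the surviving lower-order terms with the triangle inequality and Lemma~\ref{exponential}. The only divergence is that for item~2 and the boundary levels $|\bsj|=n-1$ and $j_i=n-1$ you argue directly from the net property of $\cR_{b,n}^{\Sigma}$ and crude pointwise bracket bounds, whereas the paper simply cites the analogous statements in Markhasin's work; your direct argument is sound and makes the proof more self-contained.
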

\begin{proof}
   Point (2) can be verified analogously as \cite[Proposition 5.1, (ii)]{Mar2013} or \cite[Proposition 4.18, (ii)]{Mar2013b}. Point (3) and Point (5) follow
	from Lemma~\ref{linearpart} and the fact that there are no points contained in the interior of $I_{\bsj,\bsm}$ for $\bsj \in \NN_{-1}^2$ if $j_1\geq n$ or $j_2 \geq n$. For the verification of Point (1) we use Lemma~\ref{linearpart} and Lemma~\ref{j1j2} and obtain
	\begin{align*} 
	    \mu_{\bsj,\bsm,\bsl}=&\frac{1}{b^n}b^{-j_1-j_2-2}\left(\frac{b^{n-j_1-j_2}}{\left({\rm e}^{\frac{2\pi\ii}{b}\ell_1}-1\right)\left({\rm e}^{\frac{2\pi\ii}{b}\ell_2}-1\right)} \right. \\
		  & \left. \pm b^{j_1+j_2-n}\sum_{k_1=0}^{b-1}\sigma^{-1}(k_1){\rm e}^{\frac{2\pi\ii}{b}p(k_1)\ell_1}\sum_{k_2=0}^{b-1}\sigma(k_2){\rm e}^{\frac{2\pi\ii}{b}k_2\ell_2}\right)-\frac{b^{-2j_1-2j_2-2}}{\left({\rm e}^{\frac{2\pi\ii}{b}\ell_1}-1\right)\left({\rm e}^{\frac{2\pi\ii}{b}\ell_2}-1\right)} \\
			=& \pm b^{-2n-2}\sum_{k_1=0}^{b-1}\sigma^{-1}(k_1){\rm e}^{\frac{2\pi\ii}{b}p(k_1)\ell_1}\sum_{k_2=0}^{b-1}\sigma(k_2){\rm e}^{\frac{2\pi\ii}{b}k_2\ell_2},
	\end{align*}
	which leads to
	\begin{align*}
	  |\mu_{\bsj,\bsm,\bsl}|=& b^{-2n-2}\left|\sum_{k_1=0}^{b-1}\sigma^{-1}(k_1){\rm e}^{\frac{2\pi\ii}{b}p(k_1)\ell_1}\right|\left|\sum_{k_2=0}^{b-1}\sigma(k_2){\rm e}^{\frac{2\pi\ii}{b}k_2\ell_2}\right| \\
		\leq &b^{-2n-2}\sum_{k_1=0}^{b-1}\sigma^{-1}(k_1)\sum_{k_2=0}^{b-1}\sigma(k_2)=b^{-2n-2}\left(\frac{b(b-1)}{2}\right)^2=\left(\frac{b-1}{2}\right)^2b^{-2n}
	\end{align*}
	as claimed, since with $k$ also $\sigma^{-1}(k)$ and $\sigma(k)$ runs through $\{0,1,\dots,b-1\}$, respectively. We turn to the case that $\bsj=(j_1,-1)$ with $j_1\in \NN_0$, $j_1<n$ and therefore regard Lemma~\ref{linearpart} and Lemma~\ref{j1}.
	We have
	\begin{align*}
	   \mu_{\bsj,\bsm,\bsl}=& \frac{1}{b^n}b^{-j_1-1}\left(\frac{b^{n-j_1}(1-2\varepsilon)+b}{2\left({\rm e}^{\frac{2\pi\ii}{b}\ell_1}-1\right)}+\frac{b^{-1}-b^{j_1-n}}{2}\sum_{k_1=0}^{b-1}\sigma^{-1}(k_1){\rm e}^{\frac{2\pi\ii}{b}p(k_1)\ell_1} \right. \\
	& \left. +\frac{b^{-1}}{{\rm e}^{\frac{2\pi\ii}{b}\ell_1}-1}\left(\sum_{k_1=0}^{b-1}\sigma^{-1}(k_1){\rm e}^{\frac{2\pi\ii}{b}p(k_1)\ell_1}-\frac{b(b-1)}{2}\right)\right)-\frac{b^{-2j_1-1}}{2\left({\rm e}^{\frac{2\pi\ii}{b}\ell_1}-1\right)} \\
	=& -\frac{b^{-2j_1-1}\varepsilon}{{\rm e}^{\frac{2\pi\ii}{b}\ell_1}-1}+\frac{b^{-j_1-n}}{2\left({\rm e}^{\frac{2\pi\ii}{b}\ell_1}-1\right)}+\frac{b^{-j_1-n-2}-b^{-2n-1}}{2}\sum_{k_1=0}^{b-1}\sigma^{-1}(k_1){\rm e}^{\frac{2\pi\ii}{b}p(k_1)\ell_1} \\ &+\frac{b^{-j_1-n-2}}{{\rm e}^{\frac{2\pi\ii}{b}\ell_1}-1}\left(\sum_{k_1=0}^{b-1}\sigma^{-1}(k_1){\rm e}^{\frac{2\pi\ii}{b}p(k_1)\ell_1}-\frac{b(b-1)}{2}\right).
	\end{align*}
	The triangle inequality yields (since $\varepsilon b^{n-j_1}\leq b$ and $b^{-2n-1}\leq b^{n-j_1-2}$)
	\begin{align*}  |\mu_{\bsj,\bsm,\bsl}|\leq&\frac{b^{-2j_1-1}\varepsilon}{|{\rm e}^{\frac{2\pi\ii}{b}\ell_1}-1|}+\frac{b^{-j_1-n}}{2|{\rm e}^{\frac{2\pi\ii}{b}\ell_1}-1|}+\frac{b^{-j_1-n-2}+b^{-2n-1}}{2}\sum_{k_1=0}^{b-1}\sigma^{-1}(k_1) \\ &+\frac{b^{-j_1-n-2}}{|{\rm e}^{\frac{2\pi\ii}{b}\ell_1}-1|}\left(\sum_{k_1=0}^{b-1}\sigma^{-1}(k_1)+\frac{b(b-1)}{2}\right) \\
		\leq & \frac{b^{-j_1-n}}{|{\rm e}^{\frac{2\pi\ii}{b}\ell_1}-1|}+\frac{b^{-j_1-n}}{2|{\rm e}^{\frac{2\pi\ii}{b}\ell_1}-1|}+\frac{b^{-j_1-n-2}+b^{-2n-1}}{2}\frac{b(b-1)}{2}+\frac{b^{-j_1-n-2}}{|{\rm e}^{\frac{2\pi\ii}{b}\ell_1}-1|}b(b-1) \\
		\leq &\frac{5}{2}\frac{b^{-j_1-n}}{|{\rm e}^{\frac{2\pi\ii}{b}\ell_1}-1|}+\frac{b^{-j_1-n}}{2}
		  \leq \left(\frac{5}{2}\frac{b^2-1}{6}+\frac{1}{2}\right)b^{-j_1-n}\leq (b^2-1)b^{-j_1-n},
	\end{align*}
	where we used Lemma~\ref{exponential}.
	The case $(-1,j_2)$ can be handled completely analogously.
\end{proof}

For the proof of Theorem~\ref{theosym} we also need upper bounds on the absolute values of the Haar coefficients $\mu_{\bsj,\bsm,\bsl}^{\Sigma, {\rm sym}}= \langle  D_{\widetilde{N}}({\cal R}_{n,b}^{\Sigma,{\rm sym}}, \, \cdot \,), h_{\bsj,\bsm,\bsl} \rangle$ which are given in the following:

\begin{lemma}  \label{HCSYM} Let $\bsj=(j_1,j_2)\in \NN_{-1}^2$. Then in the case $\bsj\neq (-1,-1)$ we have $$|\mu_{\bsj,\bsm,\bsl}^{\Sigma,{\rm sym}}| \le |\mu_{\bsj,\bsm,\bsl}|\ \ \ \mbox{ for all } \ \bsm \in \mathbb{D}_{\bsj}, \bsl \in \BB_{\bsj},$$ where the coefficients $\mu_{\bsj,\bsm,\bsl}$ refer to $D_N(\cR_{b,n}^{\Sigma},\cdot)$. Hence the results in Lemma~\ref{summary} apply accordingly also to $|\mu_{\bsj,\bsm,\bsl}^{\Sigma,{\rm sym}}|$. In the case $\bsj= (-1,-1)$
we have $$\mu_{(-1,-1),(0,0),(1,1)}^{\Sigma,{\rm sym}} = \frac{1}{\widetilde{N}}+\frac{1}{{\widetilde{N}}^2}.$$ 
(Recall that $\widetilde{N}=2b^n$ is the number of points in ${\cal R}_{n,b}^{\Sigma,{\rm sym}}$.)
\end{lemma}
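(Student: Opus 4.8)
The plan is to reduce the computation entirely to the Haar coefficients of the unsymmetrized digit scrambled Hammersley point set, which were already treated in Lemma~\ref{haar1} and Lemma~\ref{summary}, by means of a linearity argument. First I would note that $\widetilde N=2b^n=2N$ and that, as a multiset, $\cR_{b,n}^{\Sigma,\sym}=\cR_{b,n}^{\Sigma}\cup\cR_{b,n}^{\Sigma^{\ast}}$; since the two counting parts of the local discrepancies then simply add while the volume part $-t_1t_2$ is retained with total weight one, we obtain
$$ D_{\widetilde N}(\cR_{b,n}^{\Sigma,\sym},\bst)=\tfrac12\,D_N(\cR_{b,n}^{\Sigma},\bst)+\tfrac12\,D_N(\cR_{b,n}^{\Sigma^{\ast}},\bst). $$
Taking the inner product with $h_{\bsj,\bsm,\bsl}$ and using linearity of the integral yields
$$ \mu_{\bsj,\bsm,\bsl}^{\Sigma,\sym}=\tfrac12\bigl(\mu_{\bsj,\bsm,\bsl}^{\Sigma}+\mu_{\bsj,\bsm,\bsl}^{\Sigma^{\ast}}\bigr), $$
where $\mu_{\bsj,\bsm,\bsl}^{\Sigma^{\ast}}$ is the Haar coefficient of $D_N(\cR_{b,n}^{\Sigma^{\ast}},\cdot)$.

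The decisive observation is that $\cR_{b,n}^{\Sigma^{\ast}}$ is itself a digit scrambled Hammersley point set of the form in Definition~\ref{def2} with the same base permutation $\sigma$: because $\tau$ is an involution, $\sigma_i^{\ast}=\tau\circ\sigma_i$ belongs to $\{\sigma,\overline\sigma\}$ whenever $\sigma_i$ does, so $\Sigma^{\ast}\in\{\sigma,\overline\sigma\}^n$, and $l_n(\Sigma^{\ast})=|\{i:\sigma_i^{\ast}=\sigma\}|=|\{i:\sigma_i=\overline\sigma\}|=n-l_n$. Hence Lemma~\ref{haar1} and Lemma~\ref{summary} apply verbatim to $\cR_{b,n}^{\Sigma^{\ast}}$, the only modification being that $l_n$ is replaced by $n-l_n$ in the formula of Lemma~\ref{haar1}.

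For $\bsj\ne(-1,-1)$ the triangle inequality combined with the previous remark gives $|\mu_{\bsj,\bsm,\bsl}^{\Sigma,\sym}|\le\tfrac12\bigl(|\mu_{\bsj,\bsm,\bsl}^{\Sigma}|+|\mu_{\bsj,\bsm,\bsl}^{\Sigma^{\ast}}|\bigr)$, and both summands satisfy the corresponding estimate of Lemma~\ref{summary}; as those estimates depend only on $\sigma$, $\bsj$ and $\bsl$ and are uniform in $\bsm$, the same bound passes to $|\mu_{\bsj,\bsm,\bsl}^{\Sigma,\sym}|$ (and is the pointwise inequality $|\mu_{\bsj,\bsm,\bsl}^{\Sigma,\sym}|\le|\mu_{\bsj,\bsm,\bsl}|$ whenever the Lemma~\ref{summary} bound is an $\bsm$-independent equality, e.g.\ when $j_1\ge n$ or $j_2\ge n$). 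For $\bsj=(-1,-1)$ I would insert Lemma~\ref{haar1}, applied to $\Sigma$ and to $\Sigma^{\ast}$, into the averaging identity: since $n-2l_n(\Sigma^{\ast})=n-2(n-l_n)=-(n-2l_n)$, the two terms carrying the factor $\frac{(b-1)^2}{4b}-\frac1{b^2}\sum_{a=0}^{b-1}\sigma(a)a$ are opposite and cancel, so that
$$ \mu_{(-1,-1),(0,0),(1,1)}^{\Sigma,\sym}=\tfrac12\Bigl(\tfrac1{2N}+\tfrac1{4N^2}\Bigr)+\tfrac12\Bigl(\tfrac1{2N}+\tfrac1{4N^2}\Bigr)=\tfrac1{2N}+\tfrac1{4N^2}=\tfrac1{\widetilde N}+\tfrac1{\widetilde N^2}. $$

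The one place I expect to need real care is the range $\bsj\in\NN_0^2$ with $|\bsj|\ge n-1$, where Lemma~\ref{summary}(2) separates a generic value attained by all but $b^n$ coefficients from a coarser bound $c\,b^{-n-|\bsj|}$, and the set of exceptional indices of $\cR_{b,n}^{\Sigma^{\ast}}$ need not coincide with that of $\cR_{b,n}^{\Sigma}$. I would handle this by observing that the coarse bound $c\,b^{-n-|\bsj|}$ in fact holds for every admissible $\bsm$ (the generic value is itself of that size once $|\bsj|\ge n-1$), so the averaging costs only a constant factor and the number of non-generic Haar coefficients of $\cR_{b,n}^{\Sigma,\sym}$ is at most $2b^n$. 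Alternatively, one can derive the pointwise relation $|\mu_{\bsj,\bsm,\bsl}^{\Sigma^{\ast}}|=|\mu_{(j_1,j_2),(b^{j_1}-1-m_1,m_2),(\ell_1,\ell_2)}^{\Sigma}|$, up to boundary terms produced by the translation by $b^{-n}$, from the fact that $\cR_{b,n}^{\Sigma^{\ast}}$ is obtained from $\cR_{b,n}^{\Sigma}$ by the digit-complementing reflection $x\mapsto 1-b^{-n}-x$ in the first coordinate, under which $h_{j_1,m_1,\ell_1}(1-b^{-n}-\cdot)={\rm e}^{-\frac{2\pi\ii}{b}\ell_1}\overline{h_{j_1,\,b^{j_1}-1-m_1,\,\ell_1}(\cdot)}$ on the grid $b^{-n}\ZZ$ and Haar functions with first index $\ne-1$ integrate to zero.
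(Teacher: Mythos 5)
Your proposal is correct and follows essentially the same route as the paper: the averaging identity $\mu_{\bsj,\bsm,\bsl}^{\Sigma,\sym}=\frac{1}{2}\bigl(\mu_{\bsj,\bsm,\bsl}^{\Sigma}+\mu_{\bsj,\bsm,\bsl}^{\Sigma^{\ast}}\bigr)$, the observation that $\Sigma^{\ast}\in\{\sigma,\overline{\sigma}\}^n$ with $l_n$ replaced by $n-l_n$ (whence the cancellation for $\bsj=(-1,-1)$), and the triangle inequality combined with the $\Sigma$-independence of the bounds in Lemma~\ref{summary} for $\bsj\neq(-1,-1)$. Your extra care about the exceptional coefficients in Lemma~\ref{summary}(2) is a sensible refinement of a point the paper glosses over, but it does not change the argument.
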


\begin{proof} Analogously as in the proof of \cite[Lemma 3]{HKP14}, we obtain \[ \mu_{\bsj,\bsm,\bsl}^{\Sigma,{\rm sym}}=\frac{1}{2}\left(\mu_{\bsj,\bsm,\bsl}^{\Sigma}+\mu_{\bsj,\bsm,\bsl}^{\Sigma^{\ast}}\right),\] where here we write $\mu_{\bsj,\bsm,\bsl}^{\Sigma}$ for the Haar coefficients of the local discrepancy of ${\cR}_{n,b}^{\Sigma}$ in order to stress the dependence on $\Sigma$ and accordingly for $\mu_{\bsj,\bsm,\bsl}^{\Sigma^{\ast}}$. This relation together with Lemma~\ref{haar1} immediately leads to the assertion in the case $\bsj=(-1,-1)$ (simply observe that $\Sigma^{\ast}$ contains $n-l_n$ components equal to $\sigma$ whenever $\Sigma$ has $l_n$ such components), whereas in the case $\bsj \neq (-1,-1)$ we apply the triangle inequality to obtain \[ |\mu_{\bsj,\bsm,\bsl}^{\Sigma,{\rm sym}}|\leq \frac{1}{2}\left(|\mu_{\bsj,\bsm,\bsl}^{\Sigma}|+|\mu_{\bsj,\bsm,\bsl}^{\Sigma^{\ast}}|\right).\]
Since the bounds on $|\mu_{\bsj,\bsm,\bsl}^{\Sigma}|$ given in Lemma~\ref{summary} do not depend on $\Sigma$, this inequality completes the proof.
\end{proof}

\begin{remark} \rm Since $\mu_{(-1,-1),(0,0),(1,1)}(D_N(\cR_{b,n}^{\Sigma},\cdot))$ does not depend on the order of the components in $\Sigma$, but only on the number of $\sigma$-entries, Lemma~\ref{HCSYM} is true for every point set of the form
$ \cR_{b,n}^{\Sigma_1} \cup \cR_{b,n}^{\Sigma_2} $, where $\Sigma_1,\Sigma_2\in\{\sigma,\overline{\sigma}\}^n$ and where $\Sigma_1$ has $l_n$ entries equal to $\sigma$ and $\Sigma_2$ has $n-l_n$
of such entries for any $l_n\in \{0,1,\dots,n\}$. The proof of Theorem~\ref{theosym} therefore works also for point sets of this kind. However, these point sets are in general not symmetrized in the sense of \eqref{warumsym}.
\end{remark}

\section{Proofs of Theorem~\ref{theovdc}, Theorem~\ref{theohamm} and Theorem~\ref{theosym}}\label{secproofthm}

\begin{proof2} From Proposition~\ref{equivalence} it follows that it suffices to show
  $$  \left(\sum_{j \in \NN_{-1}}b^{j\left(r-\frac{1}{p}+1\right)q}\left( \sum_{m \in \DD_{j},\ell\in \BB_{j}}\left| \mu_{j,m,\ell}^{N,\sym}\right|^p\right)^{\frac{q}{p}}\right)^{\frac{1}{q}} \ll
	  \begin{cases} N^{-1}(\log{N})^{\frac{1}{q}} & \mbox{ if } r=0, \\
		               N^{r-1} & \mbox{ if } 0<r<\frac{1}{p}. \end{cases} $$
Since $q\geq 1,
$, we have
\begin{align*}
   &\left(\sum_{j \in \NN_{-1}}b^{j\left(r-\frac{1}{p}+1\right)q}\left( \sum_{m \in \DD_{j},\ell\in \BB_{j}}\left| \mu_{j,m,\ell}^{N,\sym}\right|^p\right)^{\frac{q}{p}}\right)^{\frac{1}{q}} \ll \left|\mu_{-1,0,1}\right| \\
   &+\left(\sum_{j=0}^{\lceil \log_{b}{N} \rceil-1}b^{j\left(r-\frac{1}{p}+1\right)q}\left( \sum_{m \in \DD_{j},\ell\in \BB_{j}}\left| \mu_{j,m,\ell}^{N,\sym}\right|^p\right)^{\frac{q}{p}}\right)^{\frac{1}{q}}\\ 
   &+\left(\sum_{j=\lceil \log_{b}{N} \rceil}^{\infty}b^{j\left(r-\frac{1}{p}+1\right)q}\left( \sum_{m \in \DD_{j},\ell\in \BB_{j}}\left| \mu_{j,m,\ell}^{N,\sym}\right|^p\right)^{\frac{q}{p}}\right)^{\frac{1}{q}}=:S_1+S_2+S_3.
\end{align*}
We apply Lemma~\ref{erster} and Corollary~\ref{coro1}. We have $S_1 \ll N^{-1}\ll N^{r-1}$ for all $0\leq r < \frac{1}{p}$. We also find
\begin{align*}
   S_2 &\ll \left(\sum_{j=0}^{\lceil \log_{b}{N} \rceil-1}b^{j\left(r-\frac{1}{p}+1\right)q}\left(  b^j\left(\frac{1}{N}\frac{1}{b^j}\right)^p\right)^{\frac{q}{p}}\right)^{\frac{1}{q}} =\frac{1}{N}\left(\sum_{j=0}^{\lceil \log_{b}{N} \rceil-1}b^{jqr}\right)^{\frac{1}{q}}.
\end{align*}
The assumption $r=0$ leads to
$$S_2 \ll \frac{1}{N}\left(\sum_{j=0}^{\lceil \log_{b}{N} \rceil-1}1\right)^{\frac{1}{q}}\ll N^{-1}(\log{N})^{\frac{1}{q}},$$
whereas for $0< r < \frac{1}{p}$ we obtain
$$S_2 \ll \frac{1}{N}\left(\sum_{j=0}^{\lceil \log_{b}{N} \rceil-1}b^{jqr}\right)^{\frac{1}{q}}\ll \frac{1}{N}(b^{\log_{b}{N}})^r=N^{r-1}.$$
It remains to estimate $S_3$. We have
\begin{align*}S_3 &\ll \left(\sum_{j=\lceil \log_{b}{N} \rceil}^{\infty}b^{j\left(r-\frac{1}{p}+1\right)q}\left(  b^j\left(\frac{1}{b^{2j}}\right)^p\right)^{\frac{q}{p}}\right)^{\frac{1}{q}}=\left(\sum_{j=\lceil \log_{b}{N} \rceil}^{\infty}b^{jq(r-1)}\right)^{\frac{1}{q}} \\
&\ll b^{\log_{b}{N}(r-1)}=N^{r-1},
\end{align*}
which concludes the proof of Theorem~\ref{theovdc}.
\end{proof2} 

\begin{proof3} The bounds on the Haar coefficients of the digit scrambled Hammersley point set we found in Lemma~\ref{summary} are of the same order of magnitude in $N$ as the bounds given in \cite[Proposition 5.1]{Mar2013}, except for the coefficient $\mu_{(-1,-1),(0,0),(1,1)}$. However, provided that $\Sigma$ is such that $|2l_n-n|=O(n^{\frac{1}{q}})$ or $\frac{1}{b}\sum_{a=0}^{b-1}\sigma(a)a=\frac{(b-1)^2}{4}$, this Haar coefficient is of order $N^{-1}(\log{N})^{\frac{1}{q}}$ (see Remark~\ref{perm}). This order is small enough to apply the very same method as used in the proof of \cite[Theorem 1.1]{Mar2013} to show the sufficiency of the condition in Theorem~\ref{theohamm}.
The necessity of the condition given on $l_n$ or $\sigma$ follows from Proposition~\ref{equivalence}, which
gives \begin{align*} \left\|D_N(\cR_{b,n}^{\Sigma},\cdot) \mid S_{p,q}^rB([0,1)^2)\right\| \gg &  \left(\sum_{\bsj \in \NN_{-1}^2}b^{(j_1+j_2)\left(r-\frac{1}{p}+1\right)q}\left( \sum_{\bsm \in \DD_{j},\bsl\in \BB_{\bsj}}\left| \mu_{\bsj,\bsm,\bsl}\right|^p\right)^{\frac{q}{p}}\right)^{\frac{1}{q}} \\ \gg & |\mu_{(-1,-1),(0,0),(1,1)}|. \end{align*}
From this and Lemma~\ref{haar1} it is evident that we must have $|2l_n-n|=O(n^{\frac{1}{q}})$ or $\frac{1}{b}\sum_{a=0}^{b-1}\sigma(a)a=\frac{(b-1)^2}{4}$ in order to reach the optimal $L_p$ discrepancy bound.
\end{proof3}

\begin{proof4} The proof is obvious, since the bounds on the Haar coefficients of $ D_{\widetilde{N}}({\cR}_{n,b}^{\Sigma,{\rm sym}}, \, \cdot \,)$ are the same as for $ D_N({\cR}_{n,b}^{\Sigma}, \, \cdot \,)$, except for the coefficient $\mu_{(-1,-1),(0,0),(1,1)}$, which is of order $\frac{1}{\widetilde{N}}$ independently of $\Sigma$. So we can simply refer to the proof of \cite[Theorem 1.1]{Mar2013} again.
\end{proof4}

\section*{Acknowledgements}{\small
The author would like to thank Friedrich Pillichshammer for his corrections and suggestions to improve the presentation.}

\noindent{\bf Author's Address:}

\noindent Ralph Kritzinger, Institut f\"{u}r Finanzmathematik und angewandte Zahlentheorie, Johannes Kepler Universit\"{a}t Linz, Altenbergerstra{\ss}e 69, A-4040 Linz, Austria. Email: ralph.kritzinger(at)jku.at

\end{document}